\documentclass{amsart}
\usepackage[utf8]{inputenc}
\usepackage{todonotes} 
\usepackage{amsmath, amssymb, amsthm} 
\usepackage{latexsym} 
\usepackage{dsfont}
\usepackage[flushleft]{paralist}
\usepackage[all]{xy}
\usepackage{tikz-cd}
\usepackage{stmaryrd} 
\usepackage{hyperref} 
\usepackage[OT2,T1]{fontenc}
\tikzcdset{scale cd/.style={every label/.append style={scale=#1},
    cells={nodes={scale=#1}}}}
\DeclareSymbolFont{cyrletters}{OT2}{wncyr}{m}{n}
\DeclareMathSymbol{\Zhe}{\mathalpha}{cyrletters}{"11} 
    
\begin{document}
\title{On the growth of the Jacobians in $\Z_p^l$-voltage covers of graphs}
\author[S.~Kleine]{Sören Kleine} 
\address[Kleine]{Institut für Theoretische Informatik, Mathematik und Operations Research, Universität der Bundeswehr München, Werner-Heisenberg-Weg 39, 85577 Neubiberg, Germany} 
\email{soeren.kleine@unibw.de} 
\author[K.~Müller]{Katharina Müller}
\address[Müller]{D\'epartement de Math\'ematiques et de Statistique\\
Universit\'e Laval, Pavillion Alexandre-Vachon\\
1045 Avenue de la M\'edecine\\
Qu\'ebec, QC\\
Canada G1V 0A6}
\email{katharina.mueller.1@ulaval.ca}

\subjclass[2020]{Primary: 11C30, 11R23; Secondary: 05C25, 05C40, 05C50, 11C20} 
\keywords{Voltage cover of a graph, Greenberg's conjecture, Iwasawa main conjecture, (generalised) Iwasawa invariants}

\newcommand{\R}{\mathds{R}}
 	\newcommand{\Z}{\mathds{Z}}
 	\newcommand{\N}{\mathds{N}}
 	\newcommand{\Q}{\mathds{Q}}
 	\newcommand{\K}{\mathds{K}}
 	\newcommand{\M}{\mathds{M}} 
 	\newcommand{\C}{\mathds{C}}
 	\newcommand{\B}{\mathds{B}}
 	\newcommand{\LL}{\mathds{L}}
 	\newcommand{\F}{\mathds{F}}
 	\newcommand{\p}{\mathfrak{p}} 
 	\newcommand{\q}{\mathfrak{q}} 
 	\newcommand{\f}{\mathfrak{f}} 
 	\newcommand{\Pot}{\mathcal{P}}
 	\newcommand{\Gal}{\textup{Gal}}
 	\newcommand{\rg}{\textup{rank}}
 	\newcommand{\id}{\textup{id}}
 	\newcommand{\Ker}{\textup{Ker}}
 	\newcommand{\Image}{\textup{Im}} 
 	\newcommand{\pr}{\textup{pr}}
 	\newcommand{\la}{\langle}
 	\newcommand{\ra}{\rangle}
 	\newcommand{\gdw}{\Leftrightarrow}
 	\newcommand{\pfrac}[2]{\genfrac{(}{)}{}{}{#1}{#2}}
 	\newcommand{\Ok}{\mathcal{O}}
 	\newcommand{\Norm}{\mathrm{N}} 
 	\newcommand{\coker}{\mathrm{coker}}
 	\newcommand{\dotcup}{\stackrel{\textstyle .}{\bigcup}}
 	\newcommand{\Cl}{\mathrm{Cl}}
 	\newcommand{\Sel}{\textup{Sel}} 
 	\newcommand{\OkG}{\Ok\llbracket G\rrbracket }
 	\newcommand{\Ann}{\textup{Ann}} 

 	\newtheorem{lemma}{Lemma}[section] 
 	\newtheorem{prop}[lemma]{Proposition} 
 	\newtheorem{defprop}[lemma]{Definition and Proposition} 
 	\newtheorem{conjecture}[lemma]{Conjecture} 
 	\newtheorem{thm}[lemma]{Theorem} 
 	\newtheorem*{thm*}{Theorem} 
 	\newtheorem{cor}[lemma]{Corollary}
 	\newtheorem{claim}[lemma]{Claim}

 	\theoremstyle{definition}
 	\newtheorem{def1}[lemma]{Definition} 
 	\newtheorem{ass}[lemma]{Assumption}
 	\newtheorem{rem}[lemma]{Remark} 
 	\newtheorem{rems}[lemma]{Remarks} 
 	\newtheorem{example}[lemma]{Example} 
 	\newtheorem{fact}[lemma]{Fact}

\maketitle

\begin{abstract}  
  We investigate the growth of the $p$-part of the Jacobians in voltage covers of finite connected multigraphs, where the voltage group is isomorphic to $\Z_p^l$ for some ${l \ge 2}$, and we study analogues of a conjecture of Greenberg on the growth of class numbers in multiple $\Z_p$-extensions of number fields. Moreover we prove an Iwasawa main conjecture in this setting, and we study the variation of (generalised) Iwasawa invariants as one runs over the $\Z_p^l$-covers of a fixed finite graph $X$. We discuss many examples; in particular, we construct examples with non-trivial Iwasawa invariants. 
\end{abstract}

\section{Introduction} \label{section:intro} 
Let $K_\infty$ be a $\Z_p$-extension of a number field $K$. Then for each ${m \in \N}$ there exists a unique subextension $K_m/K$ of degree $p^m$. Let $h_m$ be the class number of $K_m$. Then Iwasawa (see \cite{iwasawa}) proved that we have an asymptotic formula 
\begin{align} \label{eq:iw} 
  v_p(h_m) = \mu(K_\infty/K) \cdot p^m + \lambda(K_\infty/K) \cdot m + \nu(K_\infty/K) 
\end{align} 
for each sufficiently large $m$, where $\mu(K_\infty/K)$, $\lambda(K_\infty/K)$ and $\nu(K_\infty/K)$ denote the Iwasawa invariants of the $\Z_p$-extension $K_\infty/K$. More generally, let $K_\infty$ be a $\Z_p^l$-extension of a number field $K$, with intermediate fields $K_m$. Then Greenberg conjectured that there exists a polynomial ${P(X,Y) \in \Q[X,Y]}$ of total degree at most $l$ and of degree at most 1 in $Y$ such that 
\[ v_p(h_m) = P(p^m,m) \] 
for each sufficiently large $m$, where $h_m$ denotes the class number of $K_m$, ${m \in \N}$. In this paper, we will refer to this conjecture as \emph{Greenberg's conjecture}. 

In Iwasawa theory of graphs one starts with a finite connected graph $X$ instead of a number field $K$ and considers a sequence of Galois covers
\[X\leftarrow X_1\leftarrow X_2\leftarrow X_3\leftarrow X_4\leftarrow \dots \] 
such that $\Gal(X_m/X)\cong (\Z/p^m\Z)^l$. Then $X_m$ is again a finite graph and we will always assume that each $X_m$ is connected. Instead of the $p$-part of the class group of the field $K_m$ one considers the $q$-part of the number of spanning trees of $X_m$ (where $q$ is a prime not necessarily distinct from $p$). Vallières and McGown-Vallières proved in a sequence of papers (see \cite{vallieres1}, \cite{vallieres2}, \cite{vallieres3}) that (under mild technical hypothesis) the number of spanning trees grows asymptotically as in Greenberg's conjecture in the case $l=1$ and ${p=q}$. In a subsequent work Lei and Vallières considered the case $l=1$ and ${q \neq p}$ (see \cite{vallieres-lei}).

In this paper we study analogous growth patterns in the theory of voltage covers of graphs for $l\ge2$ and ${q =p}$. Our approach is purely algebraic and uses the fact that the number of spanning trees is given by the size of the Jacobian of a finite connected graph (see Section~\ref{section:notation} for precise definitions). For the case $l=1$ this strategy was used to prove an Iwasawa formula like \eqref{eq:iw} for the $p$-part of the number of spanning trees by Gonet \cite{diss-gonet}. We are able to prove Greenberg's conjecture under the assumption that our base graph $X$ is connected (see Theorem \ref{thm:greenberg-conj}). A result of this form had been obtained before by Vallières and DuBose, generalising the approach of McGown-Vallières, using Ihara L-functions \cite{dubose-vallieres}. 

A central aspect of classical Iwasawa theory is the relation between algebraic objects such as certain Galois groups and analytic objects such as $L$-functions (e.g. the Hasse-Weil $L$-function $L(E,s)$ for an elliptic curve $E$ or the Dedekind zeta-function $\zeta_K(s)$ for a number field $K$). In many cases one can define a $p$-adic function interpolating special values of these complex analytic functions. Given a finite connected graph $X$ and a finite Galois covering $Y/X$ we consider the complex analytic Ihara L-function $L_X(\chi,s)$ for every character ${\chi\in \widehat{\Gal(Y/X)}}$. 

 Let now $X$ be a finite connected multigraph, and suppose that $X$ has $n$ vertices. Let $X_m$ be the $m$-th intermediate graph in a $\Z_p^l$-cover $X_\infty$ of $X$. Then there exists an element $\Delta_\infty\in \textup{Mat}_{n,n}(\Z_p[[\Gal(X_\infty/X)]])$, such that $\chi(\det(\Delta_\infty))$ interpolates the "algebraic part" of $L_X(\chi,s)$ at $s=1$. In this setting we obtain the following 
 \begin{thm*}[Iwasawa main conjecture (Theorem~\ref{thm:main_conjecture})]
 Let $J(X_\infty)$ be the Jacobian of $X_\infty$ and let $\Lambda=\Z_p[[\Gal(X_\infty/X)]]$. Then 
 \[\textup{Char}_\Lambda(J(X_\infty)\otimes \Lambda)=\det(\Delta_\infty).\]
 \end{thm*}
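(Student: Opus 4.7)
The plan is to realize $J(X_\infty) \otimes \Lambda$ as the cokernel of $\Delta_\infty$ (possibly up to a pseudo-isomorphism) and then to invoke the standard fact that, for a square matrix ${A \in \textup{Mat}_{n,n}(\Lambda)}$ with non-zero determinant, the characteristic ideal of $\coker(A)$ is the principal ideal $(\det A)$.

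First, I would recall the presentation of $J(X_m)$ at each finite level. For a finite connected multigraph $X_m$, the Jacobian fits into a short exact sequence
\[ 0 \to J(X_m) \to \coker(\Delta_m) \to \Z \to 0, \]
where the right-hand map is the sum-of-coordinates and $\Delta_m$ is the full Laplacian, viewed as a matrix in $\textup{Mat}_{n,n}(\Z_p[\Gal(X_m/X)])$ through the voltage assignment. Taking inverse limits in $m$, the matrices $\Delta_m$ assemble into $\Delta_\infty$, and one obtains a presentation
\[ \Lambda^n \xrightarrow{\Delta_\infty} \Lambda^n \to \coker(\Delta_\infty) \to 0 \]
together with a surjection ${\coker(\Delta_\infty) \twoheadrightarrow \Z_p}$ whose kernel is, up to pseudo-null error, the $\Lambda$-module $J(X_\infty) \otimes \Lambda$ appearing in the statement.

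Second, I would verify that $\det(\Delta_\infty) \neq 0$ in $\Lambda$ (this should follow from the hypothesis that every finite layer $X_m$ is connected, since then each $\chi(\det\Delta_\infty)$ is, up to an explicit factor, the non-vanishing Ihara-type expression related to $L_X(\chi, 1)$ for non-trivial $\chi$) and then compute characteristic ideals. Over the regular Noetherian domain $\Lambda$, the characteristic ideal of $\coker(A)$ for an $n\times n$ matrix $A$ with non-zero determinant is $(\det A)$; this follows from the structure theorem for finitely generated $\Lambda$-modules applied to the Fitting ideal, since left- and right-multiplication by matrices of unit determinant does not affect $\det(A)$. Because characteristic ideals are insensitive to pseudo-null modules and unaffected by the trivial quotient $\Z_p$, one deduces $\textup{Char}_\Lambda(J(X_\infty)\otimes \Lambda) = (\det\Delta_\infty)$.

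Finally, one has to check compatibility of limits: that the $\Delta_\infty$ whose characters interpolate the algebraic part of $L_X(\chi, 1)$ is the same as the matrix built from the voltage assignment on the Laplacian, and that $\otimes \Lambda$ commutes with the inverse limit defining $J(X_\infty)$. These are control-type statements that should be read off directly from the voltage construction of $X_\infty$. The main obstacle I anticipate is the careful bookkeeping around the augmentation: at every finite level the all-ones vector lies in the kernel of $\Delta_m$, and over $\Lambda$ this trivial kernel translates into $\det(\Delta_\infty)$ being divisible by an element annihilating $\Z_p$ as a trivial $\Lambda$-module. Ensuring that this correction is exactly absorbed by the \emph{$\otimes \Lambda$} on the left-hand side, so that no spurious augmentation factor appears in the final equality of ideals, will be the delicate part of the argument.
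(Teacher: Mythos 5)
Your overall strategy coincides with the paper's: realise $\coker(\Delta_\infty)$ as $\textup{Div}_\Lambda/\textup{Pr}_\Lambda$ (equivalently $\textup{Pic}_\infty$), compute $\textup{Char}(\coker(\Delta_\infty)) = (\det\Delta_\infty)$ via the Fitting ideal of a square free presentation, and then pass to $J_\infty$ through the short exact sequence whose cokernel is $\Z_p$. The paper proceeds by tensoring with $\Lambda$ and checking left-exactness via a $\textup{Tor}_1$ argument rather than via inverse limits, but that is a cosmetic difference; the essential ideas are the same.

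The one point where you are not yet on solid ground is exactly the issue you flag at the end, and your description of how to resolve it is not quite right. The quotient $\coker(\Delta_\infty)/J_\infty \cong \Z_p \cong \Lambda/(T_1,\ldots,T_l)$ is \emph{pseudo-null if and only if $l \ge 2$}, because the annihilating ideal $(T_1,\ldots,T_l)$ has height $l$. This is why the theorem carries the hypothesis $l \ge 2$ (cf.\ Theorem~\ref{thm:main_conjecture}), and for $l \ge 2$ there is nothing to ``absorb'' --- the $\Z_p$ factor simply contributes trivially to the characteristic ideal. Your phrase ``characteristic ideals are \ldots unaffected by the trivial quotient $\Z_p$'' is false for $l = 1$: there $\Z_p \cong \Lambda/(T)$ has characteristic ideal $(T)$, and the correct statement is $(T)\cdot\textup{Char}(J_\infty) = (\det\Delta_\infty)$ (see Remark~\ref{rem:main_conjecture}). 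Also, $\det(\Delta_\infty)$ is \emph{not} divisible by a single element annihilating $\Z_p$ when $l \ge 2$; rather $\det(\Delta_\infty)(0,\ldots,0)=0$, which only says $\det(\Delta_\infty)$ lies in the height-$l$ ideal $(T_1,\ldots,T_l)$, not in any height-one prime coming from $\Z_p$. So the obstacle you worried about vanishes for the right reason: pseudo-nullity of $\Z_p$ over $\Lambda_l$, $l \ge 2$, not a cancellation. Make the $l \ge 2$ hypothesis explicit and replace the ``absorption'' heuristic with this pseudo-nullity argument, and the proof is complete.
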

Here $\textup{Char}(M)$ denotes the characteristic ideal of a finitely generated and torsion $\Lambda$-module $M$ (for details we refer to Section~\ref{section:notation_iw-modules}). 

The third central topic we want to address in the present paper is the behaviour of so-called \emph{(generalised) Iwasawa invariants} if we vary the $\Z_p^l$-cover of a given finite connected graph $X$. Let $P(X,Y)$ be the polynomial appearing in Greenberg's conjecture. Then
\[P(p^m,m)=m_0p^{ml}+l_0mp^{m(l-1)}+O(p^{m(l-1)}).\]
Following Cuoco and Monsky (see \cite{cuoco-monsky}) we call $m_0$ and $l_0$ the (generalised) Iwasawa invariants of $J(X_\infty)\otimes \Lambda$. In Section~\ref{sec:Fukuda} we define a topology on the set $\mathcal{E}^l(X)$ of voltage $\Z_p^l$-covers of $X$ and we prove the following results. 
\begin{thm*}[Theorems~\ref{thm:local_max1} and \ref{thm:local_max2}]
We have the following two results. 
\begin{itemize}
    \item[(1)]Assume that $l=1$ and let ${X_\infty \in \mathcal{E}^1(X)}$. Then there exists a (sufficiently small) neighbourhood $U$ of $X_\infty$ such that the following statements hold: \begin{compactenum}[(a)] 
      \item For each ${\tilde{X}_\infty \in U}$, we have 
      \[ m_0(\tilde{X}_\infty) \le m_0(X_\infty). \] 
      \item ${l_0(\tilde{X}_\infty) = l_0(X_\infty)}$ for each ${\tilde{X}_\infty \in U}$ for which ${m_0(\tilde{X}_\infty) = m_0(X_\infty)}$. 
    \end{compactenum} 
    \item[(2)]Fix an element ${X_\infty \in \mathcal{E}^l(X)}$, ${l \ge 2}$. Then there exist an integer ${k \in \N}$ and a neighbourhood $U$ of $K_\infty$ such that the following two statements hold for each ${\tilde{X}_\infty}$ in $U$. \begin{compactenum}[(a)] 
     \item $m_0(\tilde{X}_\infty) \le m_0(X_\infty)$, and 
     \item $l_0(\tilde{X}_\infty) \le k$ holds if ${m_0(\tilde{X}_\infty) = m_0(X_\infty)}$. 
   \end{compactenum} 
\end{itemize}
\end{thm*}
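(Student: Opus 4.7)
The plan is to use the Iwasawa main conjecture stated above to translate both statements into questions about the characteristic element $F := \det(\Delta_\infty) \in \Lambda$, and then to apply a Fukuda-type argument controlling the generalised Iwasawa invariants of $F$ under $\mathfrak{m}$-adic perturbation. The topology on $\mathcal{E}^l(X)$ introduced in Section~\ref{sec:Fukuda} will be arranged so that closeness of $\tilde X_\infty$ to $X_\infty$ corresponds to agreement of the underlying voltage data at a high finite level, which translates into a congruence $\tilde F \equiv F$ modulo a prescribed ideal of $\Lambda$ of the form $(p^a, \mathfrak{m}^b)$.

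In the case $l = 1$, one has $\Lambda \cong \Z_p\llbracket T \rrbracket$, and Weierstrass preparation writes $F = p^{m_0} u P$ with $u$ a unit and $P$ a distinguished polynomial of degree $l_0$. Shrinking $U$ so that every nearby $\tilde F$ satisfies $\tilde F \equiv F$ modulo $(p^{m_0+1}, T^{m_0+l_0+1})$, the standard argument shows that $m_0(\tilde F) \le m_0(F)$, and that equality of $m_0$ forces $l_0(\tilde F) = l_0(F)$. This is the classical Fukuda criterion, transported to the algebraic setting of graph Jacobians, and yields both parts of (1).

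For $l \ge 2$, I would invoke the Cuoco--Monsky description of the generalised Iwasawa invariants: $m_0(F)$ is the minimum $p$-adic valuation of the coefficients of $F$, while $l_0(F)$ is computed from the reduction $F/p^{m_0} \bmod p$ as a sum of multiplicities over certain height-one primes of $\Lambda$. Part (a) is then immediate from the upper semicontinuity of the $p$-adic content of $F$ under $\mathfrak{m}$-adic perturbation. For part (b), once $m_0$ is preserved, the reduction $\tilde F / p^{m_0} \bmod p$ agrees with $F / p^{m_0} \bmod p$ modulo a high power of the maximal ideal, so $l_0(\tilde F)$ is bounded above by a constant $k$ depending only on this truncation, not on $\tilde F$ itself.

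The main obstacle is part (2)(b). Unlike the one-variable case, $l_0$ is not generally locally constant on $\mathcal{E}^l(X)$, and only a uniform upper bound can be expected. Extracting such a bound from the Cuoco--Monsky formula requires showing that only finitely many height-one primes of $\Lambda$ can contribute to $l_0(\tilde F)$ for $\tilde F$ sufficiently close to $F$, and that the individual contributions are controlled by the reduction of $F$ modulo a fixed ideal. I expect this to be the technical heart of the argument and to rest essentially on the combinatorial description of the matrices $\Delta_\infty$ developed in Section~\ref{sec:Fukuda}, together with the continuity of the assignment $X_\infty \mapsto \Delta_\infty$ in the topology on $\mathcal{E}^l(X)$.
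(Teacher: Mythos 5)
Your approach is a genuinely different route from the paper's. The paper works module-theoretically: it fixes a pseudo-isomorphism between the Iwasawa module $N = M_\Lambda/\textup{Pr}_\Lambda$ and an elementary module $E_N$, applies the Fukuda-type stabilisation result (Theorem~\ref{thm:fukuda}) for the quantities $\rg_n$ to transfer finite-level information to $N$ and $\tilde N$, and then reads off bounds on $m_0$, $l_0$ from the explicit asymptotics of $\rg_n(E_N)$, relying on \cite{local_beh}, \cite{local_max} and \cite{babaichev}. You instead pass through the characteristic power series $F = \det(\Delta_\infty)$ via the main conjecture and argue by direct congruence of power series. Both routes are legitimate; the paper's is self-contained in Section~\ref{sec:Fukuda} and does not lean on Section~\ref{section:main_conjecture} at all (so it never touches the valency hypothesis imposed there), while yours is more transparent once the main conjecture is granted, at the cost of a small bookkeeping adjustment in the case $l=1$: there $(\det\Delta_\infty) = (T)\cdot\textup{Char}(J_\infty)$ by Remark~\ref{rem:main_conjecture}, so the $\lambda$-invariant of the determinant is off by one from the one you want.

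Two remarks on the details. First, the congruence that the topology actually yields is $\tilde F \equiv F \pmod{I_m}$ with $I_m = (\omega_m(T_1),\ldots,\omega_m(T_l))$, not immediately modulo an ideal of the form $(p^a,\mathfrak{m}^b)$; but since $\omega_m(T_i)\equiv T_i^{p^m}\pmod p$ and the lower coefficients of $\omega_m(T_i)$ have $p$-adic valuation growing with $m$, the congruence modulo $(p^{m_0+1},\mathfrak{m}^b)$ for any fixed $b$ does follow for $m$ large, so your intermediate claim is recoverable. Second, and more importantly: you flag (2)(b) as the main obstacle, suggest it requires controlling contributions from infinitely many height-one primes, and point to a ``combinatorial description of $\Delta_\infty$ developed in Section~\ref{sec:Fukuda}'' --- but that section contains no such description, and the fix is in fact much easier than you fear. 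Once $m_0(\tilde F)=m_0(F)$, one checks that the reductions $\bar G_{\tilde F}$ and $\bar G_F$ in $\Lambda/(p)\cong\F_p[[T_1,\ldots,T_l]]$ differ by an element of $\mathfrak{m}^{p^m}$ (using that $\Lambda/I_m$ is $\Z_p$-torsion free to divide by $p^{m_0}$); every prime $(\bar\sigma-1)$ with $\sigma\in\Gamma\setminus\Gamma^p$ has $\mathfrak{m}$-adic order exactly one, and $\mathfrak{m}$-adic order is additive because the associated graded ring is a polynomial domain. Hence $l_0(\tilde F)$ is bounded by the $\mathfrak{m}$-adic order of $\bar G_F$, a fixed finite constant $k$, as soon as $p^m$ exceeds that order. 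This closes the gap cleanly and requires nothing about the internal structure of $\Delta_\infty$. The paper instead obtains its bound $k$ as the $T_l$-degree of a Weierstraß normal form of $f_N$ (a different, and potentially sharper, constant), but either choice proves the statement as written.
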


Let us briefly describe the structure of the article. Section~\ref{section:notation} is preliminary in nature -- here we introduce the basic notation and the setup. In Section~\ref{section:J(X_m)} we relate the Jacobians of the Galois covers $X_m/X$, ${m \in \N}$, to certain quotients of an Iwasawa module. This enables us to describe the growth of these Jacobians by using results of Cuoco and Monsky, and we already obtain a weak result in the direction of Greenberg's conjecture (see Lemma~\ref{lemma:NundL}). In Section~\ref{section:linearalgebra}, we prove Greenberg's conjecture by studying the voltage $p$-Laplacian of the Galois cover and using a suitable matrix representation in order to describe the growth of the Jacobians in terms of a certain power series. In Section~\ref{section:main_conjecture} we prove the Iwasawa main conjecture and in Section~\ref{sec:Fukuda} we prove Theorems \ref{thm:local_max1} and \ref{thm:local_max2} which have been stated above. The proofs are completely algebraic, and they are based on work of Fukuda (see \cite{fukuda}) and the first named author (see \cite{local_beh} and \cite{local_max}). In the final two sections, we compute many numerical examples by applying the results from Sections~\ref{section:linearalgebra} and ~\ref{section:main_conjecture}. In particular, we construct examples of Galois covers $X_\infty/X$ of graphs with non-trivial $m_0$- and $l_0$-invariant. 

In the classical Iwasawa theory of $\Z_p$-extensions of number fields, Iwasawa himself was the first who constructed examples (in the case of ${l = 1}$) with a non-trivial $m_0$-invariant (see \cite{iwasawa_mu}). On the other hand, to the authors' knowledge no example of a $\Z_p^l$-extension of a number field $K$ is known where one can show that the $l_0$-invariant is greater than zero (however, there exist such examples in the setting of Selmer groups of elliptic curves, see \cite{kleine-matar}). Therefore the construction of an example with non-trivial $l_0$-invariant in the final section might be of some interest. 

\textbf{Acknowledgements.}
Part of this research was carried out during a visit of the second named author at Universit\"at der Bundeswehr M\"unchen and Ludwig-Maximilians-Universit\"at M\"unchen in August and September 2022. The authors would like to thank both institutions for their hospitality and in particular Cornelius Greither and Werner Bley for fruitful discussions concerning the content of this paper. 
The authors would also like to thank Anwesh Ray, Cédric Dion and in particular Daniel Vallières for their comments on this work. We are grateful to Sage DuBose and Daniel Vallières for sharing with us a preprint of their work. 
The second named authors's research is supported by the NSERC Discovery Grants Program RGPIN-2020-04259 and RGPAS-2020-00096.

\section{Notation and definitions} \label{section:notation} 
\subsection{Iwasawa modules} \label{section:notation_iw-modules} 
Let $\Gamma$ be a group which is topologically isomorphic to $\Z_p^l$. Then the completed group ring $\Z_p\llbracket \Gamma\rrbracket $ can be identified (non-canonically) with the ring $\Z_p\llbracket T_1, \ldots, T_l\rrbracket $ of formal power series in $l$ variables, as follows: choose a set of topological generators ${\gamma_1, \ldots, \gamma_l}$ of $\Gamma$, and consider the bijective homomorphism between the group rings which is induced by mapping $\gamma_i$ to ${T_i + 1}$, respectively. 

In the following we always denote by $\Lambda_l = \Z_p\llbracket T_1, \ldots, T_l \rrbracket $ the Iwasawa algebra in $l$ variables. If $l$ is clear from the context, then we abbreviate $\Lambda_l$ to $\Lambda$. Sometimes we will also work with the ring ${R = \Z_p[T_1, \ldots, T_l]}$. 

Now we describe the structure theory of \emph{Iwasawa modules}. In this article, any finitely generated $\Lambda$-module $A$ will be called an Iwasawa module. An Iwasawa module is called \emph{pseudo-null} if it is annihilated by two relatively prime elements of the unique factorisation domain $\Lambda$. A \emph{pseudo-isomorphism} of finitely generated $\Lambda$-modules $A$ and $B$ is a $\Lambda$-module homomorphism ${\varphi \colon A \longrightarrow B}$ such that the kernel and the cokernel of $\varphi$ are pseudo-null. 

Let now $A$ be any Iwasawa module. Then by the general structure theory (see \cite[Proposition~5.1.7(ii)]{nsw}) there exists an \emph{elementary} $\Lambda$-module 
\[ E_A = \bigoplus_{i = 1}^s \Lambda/(p^{e_i}) \oplus \bigoplus_{j = 1}^t \Lambda/(g_j) \] 
and a pseudo-isomorphism ${\varphi \colon A \longrightarrow E_A}$. Here $g_1, \ldots, g_t$ are elements of $\Lambda$ which are coprime with $p$. The \emph{characteristic power series} $F_A$ attached to $A$ is the element 
\[ F_A = p^{\sum_{i = 1}^s e_i} \cdot \prod_{j = 1}^t g_j, \] 
and the \emph{characteristic ideal} $\textup{Char}(E_A)$ of $E_A$  (and $A$ respectively) is the principal ideal $(F_A)$. The characteristic power series is determined up to units of $\Lambda$ by the Iwasawa module $A$, and therefore the characteristic ideal is well-defined. 

Moreover, we can define the \emph{(generalised) Iwasawa invariants} of $A$ as follows. To this purpose, we fix an isomorphism between $\Z_p\llbracket \Gamma\rrbracket $ and $\Lambda$, as above, and we consider the characteristic power series $F_A$ attached to $A$. Let 
\[ m_0(A) = \sum_{i = 1}^s e_i, \] 
i.e. $p^{m_0(A)}$ is the exact power of $p$ which divides $F_A$. Moreover, we write 
\[ F_A = p^{m_0(A)} \cdot G_A, \] 
i.e. $G_A$ is coprime with $p$, and we consider the coset $\overline{G_A}$ of $G_A$ in the quotient algebra $\overline{\Lambda} = \Lambda/(p)$. Then we define 
\[ l_0(A) = \sum_{\mathcal{P}} v_{\mathcal{P}}(\overline{G_A}), \] 
where the sum runs over all the prime ideals of $\overline{\Lambda}$ of the form ${\mathcal{P} = (\overline{\sigma - 1})}$ for some element ${\sigma \in \Gamma \setminus \Gamma^p}$, and where $v_{\mathcal{P}}$ denotes the $\mathcal{P}$-adic valuation. Note that this sum is finite since $\overline{\Lambda}$ is again a unique factorisation domain. 

In the case of ${l = 1}$, the structure theory of Iwasawa modules is better understood. In this case, an Iwasawa module is pseudo-null if and only if it is \emph{finite}. Moreover, it follows from the Weierstraß Preparation Theorem for ${\Lambda = \Z_p\llbracket T\rrbracket }$ (see \cite[Theorem~7.3]{wash}) that the characteristic power series $F_A$ is associated to a \emph{polynomial} of the form 
\[ p^{m_0(A)} \cdot G_A, \] 
where $G_A$ is \emph{distinguished}. This means that $G_A$ is monic, and that each coefficient of $G_A$, apart from the leading one, is divisible by $p$. Then we define the \emph{classical Iwasawa invariants} of $A$ as 
\[ \mu(A) = m_0(A), \quad \lambda(A) = \deg(G_A)=l_0(G_A). \] 
If $A = \varprojlim A_n$ is the classical Iwasawa module of ideal class groups in the intermediate layers of a $\Z_p$-extension $K_\infty/K$, then the Iwasawa invariants of $A$ are the constants from the asymptotic growth formula~\eqref{eq:iw} mentioned in the introduction (see \cite{iwasawa} or \cite[Chapter~13]{wash} for more details). 

We conclude the current section by introducing some more notation. For any ${i \in \{1, \ldots, l\}}$ and every ${m \in \N}$, we let 
\[ \omega_m(T_i) = (T_i + 1)^{p^m} - 1. \] 
We also denote by $I_m$ the ideal of ${R \subseteq \Lambda}$ generated by all the $\omega_m(T_i)$, and we denote the quotient ${R/I_m}$ by $R_m$, ${m \in \N}$. 

\subsection{Graphs} 
In this article a graph $Y$ will always be a connected multigraph.
We denote the edges between the vertices $v$ and $w$ by $e_i(v,w)$. Note that for fixed $v$ and $w$ the set $E(v,w)$ of edges between $v$ and $w$ might contain more than one element. For any graph $Y$ we make the following definitions. 
\begin{def1}
Let $Y$ be a (not necessarily finite) graph. Then we denote by $V(Y)$ the set of vertices of $Y$. We assume that $\deg(v)$ is finite for each ${v \in V(Y)}$ (such graphs are called locally finite), where 
\[ \deg(v) = \sum_{w \in V(Y), w\neq v} |E(v,w)|+2\vert \{\textup{loops from $v$ to $v$}\}\vert\]
means the \emph{degree} of $v$ and $\textup{mult}(v,w) = |E(v,w)|$ means the number of edges between $v$ and $w$ (i.e. $\textup{mult}(v,w) = 0$ if $v$ and $w$ are not adjacent). We define
\begin{align*}
    \textup{Div}(Y) &=\left\{\sum_{v \in V(Y)} a_v v \; \Big| \; a_v\in \Z_p, a_v=0 \textup{ for all but finitely many }v\right\}\\
    \textup{Div}^0(Y) &=\left\{d = \sum_{v \in V(Y)} a_v v \in \textup{Div}(Y) \; \Big| \; \sum_{v \in V(Y)} a_v=0\right\} \\ 
    \textup{Pr}(Y) &=\{d\in \textup{Div}^0(Y)\, \mid \, d\sim 0\}, 
\end{align*}
where $d\sim d'$ if there is a sequence of firing moves that transforms $d$ into $d'$ (see \cite[Definition 1.5]{corry-Perkinson}).
We further define the two quotient groups 
\begin{align*}
    \textup{Pic}_p(Y) &=\textup{Div}(Y)/\textup{Pr}(Y), \\
    J_p(Y) &=\textup{Div}^0(Y)/\textup{Pr}(Y).
\end{align*}
\end{def1}
Note that $J_p(Y)$ is the $p$-Sylow subgroup of the \emph{Jacobian} of $Y$. As the prime $p$ is fixed once and for all we will write $J(Y)$ instead of $J_p(Y)$ from now on. If $Y$ is finite and connected, then it is well-known that $J(Y)$ is a finite abelian group (see \cite[Proposition 2.37]{corry-Perkinson}). Furthermore, the group $\textup{Pr}(Y)$ is generated by the elements
\[p_v=\deg(v)v-\sum_{w\sim v, w\neq v}\textup{mult}(v,w)w-2\vert \{\textup{loops from $v$ to $v$}\}\vert v\quad \forall v\in V(Y).\]

Let now $X$ be a finite graph with $n$ vertices. We specialise to the situation of voltage covers $X_m$ (see also \cite{diss-gonet}). Let $\mathbf{E}_X$ and $E_X$ be the sets of directed and undirected edges, of $X$ and let $\gamma\colon E_X\longrightarrow\mathbf{E}_X$ be a section of the natural map $\mathbf{E}_X\longrightarrow E_X$. Let $S=\gamma(E_X)$. We fix a \emph{voltage assignment}, i.e. a map  
\[\alpha \colon S\longrightarrow \Z_p^l.\]
Using the natural projection maps ${\Z_p^l\longrightarrow (\Z/p^m\Z)^l}$, ${m \in \N}$, $\alpha$ induces well-defined assignments
\[\alpha_m\colon S\longrightarrow (\Z/p^m\Z)^l.\]
Since the target groups of $\alpha$ in our applications usually will be Galois groups, we write the target groups of $\alpha$ and of the $\alpha_m$ multiplicatively (although this might look strangely). Note that for each element $e\in \mathbf{E}_X$ either $e\in S$ or $e^{-1}\in S$. If $e\notin S$, we define $\alpha(e)=\alpha(e^{-1})^{-1}$ and analogously for $\alpha_m$. Thus, we can interpret $\alpha$ and $\alpha_m$ as maps defined on $\mathbf{E}_X$. 
\begin{def1} \label{def:voltagelevel} 
  We define the derived graph ${X_m := X((\Z/p^m\Z)^l,S,\alpha_m)}$ on the vertices $(v,g\!\!\pmod{p^m})$, where $g\in \Z_p^l$ and $g \!\!\pmod{p^m}$ denotes the equivalence class of $g$ in $(\Z/p^m\Z)^l$. We draw an edge between $(v , g\!\!\pmod{p^m})$ and $(v', g'\!\!\pmod{p^m})$ if there is an edge $e$ from $v$ to $v'$ in $\mathbf{E}_X$ such that ${g'\pmod{p^m}=g \cdot \alpha_m(e) \pmod{p^m}}$. 
  
  We abbreviate the vertices $(v, 1)$ of the base graph to $v$.  
  \end{def1} 
  We have a natural action of $(\Z/p^m\Z)^l$ on $X_m$ given by
\[(g'\!\!\!\!\pmod{p^m}) \circ (x,g\!\!\!\!\pmod{p^m})=(x,g \cdot g'\!\!\!\!\pmod{p^m}).\] 
By using the natural projection $\Z_p^l\longrightarrow (\Z/p^m\Z)^l$, we also have a natural action of $\Z_p^l$ on $X_m$. This induces a well-defined action of $\Z_p^l$ on $J(X_m)$. We also obtain an action of the two canonical group rings 
\[ R := \Z_p[\Z_p^l], \quad \Z_p[[\Z_p^l]]. \] 
The ring $\Z_p[\Z_p^l]$ is isomorphic non-canonically to the ring $R = \Z_p[T_1, \ldots, T_l]$ which has been defined in Section~\ref{section:notation_iw-modules}, and the latter completed group ring is isomorphic non-canonically to the Iwasawa algebra ${\Lambda_l = \Z_p[[T_1, \ldots, T_l]]}$, as has been explained in the previous subsection. 
\begin{def1} 
   We let $X_\infty$ be a $\Z_p^l$-voltage cover of $X$, with intermediate graphs $X_m$. The projective limit 
   \[ J_\infty(X_\infty) := J(X_\infty) \otimes_R \Lambda_l \] 
   is a finitely generated $\Lambda_l$-module, i.e. it is an Iwasawa module in the language of Section~\ref{section:notation_iw-modules}. Therefore it makes sense to speak of the (generalised) Iwasawa invariants of the graph $X_\infty$. 
\end{def1} 

In this article, we always assume that all the graphs $X_m$ are connected. We will provide a sufficient criterion for this property below (see Lemma~\ref{lemma:connected}). Thus, $X_m/X$ is a Galois cover with group $(\Z/p^m\Z)^l$. Then as a $\Z_p[\Gal(X_m/X)]$-module $\textup{Pr}(X_m)$ is generated by the elements
\begin{eqnarray*} p_{v,0} & = & -\sum_{e \in \textbf{E}_v(X_m)}(t(e)-v) \\ 
& = & -\sum_{e\in \textbf{E}_v(X),t(e)\neq v}(\alpha_m(e)t(e)-v)-\sum_{e\in \textbf{E}_v(X),t(e)=v}(\alpha_m(e)+\alpha_m(e)^{-1}-2)v,\end{eqnarray*} 
where $\textbf{E}_v(Y)$ denotes the set of edges with origin $v$ in a graph $Y$, and where $t(e)$ is the target of an edge $e$.

If $e$ is a loop with trivial voltage assignment then this term vanishes in the above sum. The coefficient of ${v = (v,1)}$ in $p_{v,0}$ (over $\Z_p$) is given by \[\deg(v)v-2\vert \{e\mid t(e)=v,\alpha_m(e)=1\}\vert.\] Recall that we just write $w$ instead of $(w,1)$ for a vertex of the base graph $X$. 

Now suppose that $X$ is a finite graph, and let ${n = |V(X)|}$. 
Fixing an indexing of the vertices of $X$, say ${V(X) = \{ v_1, \ldots, v_n\}}$, we will also write ${p_{i,0} = p_{v_i,0}}$ for these elements. We define a $\Z_p$-linear map
\[\mathcal{L}_\alpha\colon \textup{Div}(X_\infty)\longrightarrow  \textup{Pr}(X_\infty), \quad v_{i,0}\mapsto p_{i,0}.\]
It is easy to see from the definitions and the action of $\Gal(X_\infty/X)$ on $\textup{Div}(X_\infty)$ that this map is unique and well-defined. It is called the \emph{voltage $p$-Laplacian}, see also \cite{diss-gonet}. 

\begin{def1}
We say that a graph $X$ contains a non-trivial cycle of length $n>1$ if there exist a sequence of vertices $v_0,v_1,\dots v_{n-1},v_n,v_0$ and a sequence of edges $e_0,\dots e_{n}$ such that $e_i$ is an edge from $v_i$ to $v_{i+1}$ for $0\le i\le n-1$ and $e_n$ is an edge from $v_n$ to $v_0$. We assume furthermore that $e_{i+1}\neq {e_i}^{-1} $ for ${0\le i\le n-1}$ and $e_n\neq e_0^{-1}$.

For any such cycle $C$ we write $\beta_C$ for the group ring element $\prod_{i=0}^n\alpha(e_i)$.
\end{def1}
\begin{lemma} \label{lemma:connected} 
$X_m$ is connected for all $m$ if and only if we can find cycles $C_1,\dots , C_l$ such that $\{\beta_{C_1},\dots,\beta_{C_l} \}$ is a set of topological generators of $\Z_p^l$. 
\end{lemma}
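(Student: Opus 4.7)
The plan is to translate the connectedness of the covers $X_m$ into a statement about a subgroup of $\Z_p^l$ generated by cycle voltages, and then to reduce the existence of $l$ topological generators to a Nakayama-style argument modulo $p$.

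First, I would fix a base vertex $v_0 \in V(X)$ and use Definition~\ref{def:voltagelevel} to show that for any closed walk $C = (e_0, \ldots, e_n)$ based at $v_0$, lifting $C$ starting from $(v_0, 1)$ yields a path in $X_m$ ending at $(v_0, \beta_C \bmod p^m)$, and that conversely every path in $X_m$ between two vertices in the fibre over $v_0$ arises this way. Let $H_m \le (\Z/p^m\Z)^l$ denote the subgroup generated by the reductions of all $\beta_C$. Since $X$ is connected, every other fibre of $X_m \to X$ is connected to the one over $v_0$ via a lift of a path in $X$, so $X_m$ is connected if and only if the fibre of $v_0$ is a single $H_m$-orbit, i.e.\ $H_m = (\Z/p^m\Z)^l$. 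Backtracking contributes a trivial factor to the voltage (since $\alpha(e)\alpha(e^{-1}) = 1$), so passing from closed walks to the non-backtracking cycles of the paper's definition does not change $H_m$.

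Second, I would take the inverse limit: by compactness of $\Z_p^l$, the condition that $H_m = (\Z/p^m\Z)^l$ for every $m$ is equivalent to the closed subgroup $H \le \Z_p^l$ topologically generated by all cycle voltages $\beta_C$ being equal to $\Z_p^l$. Third, I would apply topological Nakayama. A closed subgroup of $\Z_p^l$ is automatically a closed $\Z_p$-submodule (because $\Z$ is dense in $\Z_p$ and any closed subgroup is stable under $\Z$-scaling), so $H = \Z_p^l$ if and only if the images of the $\beta_C$ in $\Z_p^l / p\Z_p^l \cong \F_p^l$ span this $\F_p$-vector space. In that case one can extract $l$ cycles $C_1, \ldots, C_l$ whose voltages form an $\F_p$-basis, and a second application of Nakayama shows that $\{\beta_{C_1}, \ldots, \beta_{C_l}\}$ topologically generates $\Z_p^l$. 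The reverse implication is immediate, since the image of any topological generating set of $\Z_p^l$ surjects onto every finite quotient.

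The main obstacle is the first step: one has to handle multiedges and loops carefully, and to verify that the paper's restrictive definition of "non-trivial cycle" (non-backtracking, length $>1$) still captures enough voltages to generate $H_m$. This is a combinatorial bookkeeping issue rather than a conceptual one, and is resolved by first arguing with arbitrary closed walks at $v_0$ and then reducing to the stated cycles via non-backtrack reduction and, where needed, concatenation with an auxiliary edge to avoid the length-one case.
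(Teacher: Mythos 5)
Your proposal is correct and rests on the same underlying mechanism as the paper's proof: connectedness of the covers is governed by the subgroup of $\Z_p^l$ generated by cycle voltages, and a Nakayama-type reduction mod $p$ turns "this subgroup is everything" into the existence of $l$ cycles whose voltages topologically generate. The paper's proof is more hands-on — it explicitly builds paths in $X_m$ from the cycles $C_i$ for the forward direction, and extracts a non-backtracking cycle from a closed walk in $X_1$ for the reverse — whereas yours packages the two directions uniformly via the subgroups $H_m$, the inverse limit $H$, and two explicit applications of topological Nakayama, but the content is essentially identical.
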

\begin{proof}
  Recall that we write $V(X_m)$ for the set of vertices of $X_m$. We say that two vertices $(v,\gamma)$ and $(v',\gamma')$ of $X_m$ are connected if there is a path connecting $(v,\gamma)$ and $(v',\gamma')$. It is easy to check that this defines an equivalence relation on $V(X_m)$. 
  
 Assume first that there are cycles $C_1,\dots, C_l$ satisfying the above condition. In what follows, we abbreviate $\beta_{C_i}$ to $\beta_i$ for each ${1 \le i \le l}$. 
  Let $v\in V(X)$. We will first show that $(v,1)$ is connected to $(v,\gamma)$ for every $\gamma\in (\Z/p^m\Z)^l\setminus \{1\}$. As ${\beta_1,\dots ,\beta_l}$ are topological generators of $\Z_p^l$ it suffices to consider ${\gamma=\beta_i^{a_i}}$ for $1\le i\le l$ and $a_i\in \Z/p^m\Z$. Let $v_0$ be a vertex lying on $C_i$. Then $(v_0,\gamma')$ is connected to $(v_0,\gamma'\cdot \beta_i^{a_i})$ for every choice of $\gamma'$. As $X$ is connected we can find a path $e_0,e_1,\ldots, e_k$ from $v$ to $v_0$ and we see that $(v,1)$ is connected to ${(v_0,\prod_{j=1}^k\alpha_m(e_j))}$ and $(v_0,\tilde{\gamma})$ is connected to $(v,\tilde{\gamma}\prod_{j=1}^k\alpha_m(e^{-1}_j))$ for every choice of $\tilde{\gamma}$. We obtain the following relations
  \[\left(v,0\right)\sim \left(v_0,\prod_{j=1}^k \alpha_m(e_j)\right)\sim \left(v_0,\prod_{j=1}^k\alpha_m(e_j)\beta^{a_i}_i\right)\sim \left(v,\beta_i^{a_i}\right)\]
  proving our first claim. 
  
 For every $v'\in V(X)$ we can find an element $\gamma\in (\Z/p^m\Z)^l$ such that $(v,1)$ is connected to $(v',\gamma)$ -- using that $X$ is connected. Using the first part of the proof we see that $(v,1)\sim (v',\gamma)\sim (v',\gamma')$ for every choice of $\gamma'$. Therefore $X_m$ is indeed connected. 
 
 Assume now that all the $X_m$ are connected. In particular $X_1$ is connected. Let $v\in V(X)$ and $\gamma\in (\Z/p\Z)^l\setminus\{1\}$ be arbitrary. Then $(v,1)$ is connected to $(v,\gamma)$. Thus, there is a path $e_1,\dots, e_k$ from $v$ to $v$ such that $\gamma=\prod_{j=1}^k\alpha_1(e_j)$. Without loss of generality we can assume that $e_i\neq e^{-1}_{i+1}$ for $1\le i\le k-1$. Let $v_i$ be the end point of $e_i$ and set $v=v_0$. Let $k_v$ be the minimal index such that $e_{k_v}\neq e^{-1}_{k-k_v+1}$. Such an index exists, as ${\gamma\neq 1}$. Then $e_{k_v},\dots ,e_{k-k_v+1}$ is a path from $v_{k_v-1}$ to itself and it is indeed a cycle. Furthermore, ${\gamma=\prod_{j=k_v}^{k-k_v+1}\alpha_1(e_j)}$. 
  \end{proof}

\section{The module theoretic perspective in the multidimensional case} \label{section:J(X_m)} 
In this section we generalise the module-theoretic point of view from \cite{diss-gonet} to the case of arbitrary $l$: we will explain the structure of the Iwasawa module $\varprojlim_m J(X_m)$ and its relation to the asymptotic growth of $J(X_m)$.

Let $R=\Z_p[T_1,\ldots, T_l]$ and $\Lambda=\Z_p[[T_1,\ldots ,T_l]]$ be as before, and let $I_m$ and ${R_m = R/I_m}$ be defined as in Section~\ref{section:notation_iw-modules}. We write $\textup{Div}_\Lambda$, $\textup{Div}^0_\Lambda$ and $\textup{Pr}_\Lambda$ for $\textup{Div}(X_\infty)\otimes \Lambda$, $\textup{Div}^0(X_\infty)\otimes \Lambda$ and $\textup{Pr}(X_\infty)\otimes \Lambda$. It is easy to see that $\textup{Div}_\Lambda$ is $\Lambda$-free of rank $n$. Let $V$ be the submodule of $\textup{Div}_\Lambda$ generated by $p_{i,0}$, i.e. $V$ is the image of $\textup{Pr}_\Lambda$ in $\textup{Div}_\Lambda$. As $\textup{Div}(X_m)$ is generated by $v_{i,0}$ as an $R$-module, there is clearly a surjective map 
\[\textup{Div}_\Lambda/I_m\to \textup{Div}(X_m).\]
If there are elements $F_i\in \Lambda$ such that $\sum F_i \cdot p_{i,0}=0$ in $\textup{Div}_\Lambda$, then \[np^{ml}-1=\Z_p\textup{-rank}(\textup{Pr}(X_m))\le(n-1)p^{ml}+O(p^{m(l-1)})\] yielding a contradiction. Thus, $V$ is $\Lambda$-free as well and $\textup{Pr}_\Lambda\to \textup{Div}_\Lambda$ is injective.
Let $M_\Lambda$ be the submodule of $\textup{Div}_\Lambda$ generated by the elements $p_{i,0}$, the elements $v_{i,0}-v_{j,0}$ for ${1 \le j < i \le n}$ and $(T_1,\ldots, T_l)\textup{Div}_\Lambda$. The following theorem generalises results of \cite[Section~5]{diss-gonet}. 
\begin{thm} \label{thm:jacobian} 
We have $J(X_m)\cong M_\Lambda/((\omega_m(T_1),\ldots, \omega_m(T_l))\textup{Div}_\Lambda+\textup{Pr}_\Lambda)$.
\end{thm}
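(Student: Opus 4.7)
The plan is to construct a natural surjection $M_\Lambda\twoheadrightarrow J(X_m)$ whose kernel is exactly $(\omega_m(T_1),\ldots,\omega_m(T_l))\textup{Div}_\Lambda+\textup{Pr}_\Lambda$. The main tool will be the reduction map $\pi_m\colon\textup{Div}_\Lambda\twoheadrightarrow\textup{Div}(X_m)$. Identifying $\Lambda$ with $\Z_p\llbracket\Z_p^l\rrbracket$ in the standard way, the quotient $\Lambda/(\omega_m(T_1),\ldots,\omega_m(T_l))$ becomes canonically $\Z_p[\Gal(X_m/X)]$; since $\textup{Div}_\Lambda$ is $\Lambda$-free of rank $n$ on the vertices $v_{i,0}$ and $\textup{Div}(X_m)$ is $\Z_p[\Gal(X_m/X)]$-free of rank $n$ on the same generators, $\pi_m$ has kernel $(\omega_m(T_1),\ldots,\omega_m(T_l))\textup{Div}_\Lambda$ and intertwines the Galois actions.

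The first step is to identify $M_\Lambda$ with $\textup{Div}^0_\Lambda$, the kernel of the augmentation $\deg\colon\textup{Div}_\Lambda\to\Z_p$ sending each $v_{i,0}\mapsto 1$ and each $T_k\mapsto 0$. The inclusion $M_\Lambda\subseteq\textup{Div}^0_\Lambda$ is immediate: each $p_{i,0}$ lies in $\textup{Pr}_\Lambda\subseteq\textup{Div}^0_\Lambda$, each difference $v_{i,0}-v_{j,0}$ has degree $0$, and any multiple of some $T_k$ has vanishing augmentation. Conversely, given $x=\sum_{i}\lambda_i v_{i,0}\in\textup{Div}^0_\Lambda$, I would write $\lambda_i=c_i+\lambda_i'$ with $c_i\in\Z_p$ the constant term of $\lambda_i$ and $\lambda_i'\in(T_1,\ldots,T_l)\Lambda$. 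The relation $\sum_i c_i=0$ then expresses $\sum_i c_i v_{i,0}$ as a $\Z_p$-combination of the differences $v_{i,0}-v_{j,0}$, while $\sum_i\lambda_i'v_{i,0}\in(T_1,\ldots,T_l)\textup{Div}_\Lambda$; hence $x\in M_\Lambda$.

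The second step is a compatibility check. Because $\omega_m(T_k)$ has no constant term, $(\omega_m(T_1),\ldots,\omega_m(T_l))\textup{Div}_\Lambda\subseteq\textup{Div}^0_\Lambda$, so $\pi_m$ restricts to a surjection $\textup{Div}^0_\Lambda\twoheadrightarrow\textup{Div}^0(X_m)$ whose kernel is still the full ideal $(\omega_m(T_1),\ldots,\omega_m(T_l))\textup{Div}_\Lambda$. Moreover, since $\textup{Pr}_\Lambda$ is generated as a $\Lambda$-module by the elements $p_{i,0}$ (as noted in the paragraph preceding the theorem, where the injectivity of $\textup{Pr}_\Lambda\hookrightarrow\textup{Div}_\Lambda$ is also established) and $\textup{Pr}(X_m)$ is generated as a $\Z_p[\Gal(X_m/X)]$-module by the very same elements, the image of $\textup{Pr}_\Lambda$ under $\pi_m$ coincides with $\textup{Pr}(X_m)$.

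Combining the two steps will yield the induced isomorphism
\[
M_\Lambda\big/\bigl((\omega_m(T_1),\ldots,\omega_m(T_l))\textup{Div}_\Lambda+\textup{Pr}_\Lambda\bigr)\;\cong\;\textup{Div}^0(X_m)/\textup{Pr}(X_m)\;=\;J(X_m).
\]
I do not anticipate a serious obstacle; the only point requiring genuine care is that the three reduction operations (killing $(\omega_m(T_i))\textup{Div}_\Lambda$, restricting to the degree-zero submodule, and quotienting by principal divisors) interact as expected, and this is precisely what the containment $(\omega_m(T_1),\ldots,\omega_m(T_l))\textup{Div}_\Lambda\subseteq\textup{Div}^0_\Lambda$ guarantees.
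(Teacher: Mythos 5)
Your proof is correct and establishes the isomorphism, but it takes a genuinely different route from the paper's. The paper's proof works with the $R$-modules $\textup{Div}(X_\infty)$, $\textup{Div}^0(X_\infty)$, $\textup{Pr}(X_\infty)$ (divisors of finite support), sets up $\pi_m$ only as an $R$-module map on $\textup{Div}(X_\infty)$, and then uses topological \emph{density} of these $R$-modules in $\textup{Div}_\Lambda$, $M_\Lambda$, $\textup{Pr}_\Lambda$ to prove the key identities $M_\Lambda = \textup{Div}^0(X_\infty) + (\omega_m(T_i))\textup{Div}_\Lambda$ and $\textup{Pr}_\Lambda + (\omega_m(T_i))\textup{Div}_\Lambda = \textup{Pr}(X_\infty) + (\omega_m(T_i))\textup{Div}_\Lambda$ (observations (c) and (d) there). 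You sidestep the density argument entirely by recognising $M_\Lambda$ as the full preimage $\ker(\deg)\subseteq\textup{Div}_\Lambda$ of the augmentation map, which lets you work uniformly with $\Lambda$-modules and the quotient $\pi_m\colon\textup{Div}_\Lambda\to\textup{Div}(X_m)$ throughout; the identification of $\textup{Pr}_\Lambda$'s image then follows immediately from the matching generating sets $\{p_{i,0}\}$. Your route is shorter and arguably cleaner, and it makes transparent \emph{why} $M_\Lambda$ (rather than $\textup{Div}^0(X_\infty)\otimes\Lambda$) is the natural module here; the paper's route has the advantage that the auxiliary observations it proves along the way are reused later (for instance, observation (c) appears again in the proof of Lemma~\ref{lemma:projective-limit}, and the surjectivity claims of (a) are invoked in Lemma~\ref{lemma:control-finitelevel}). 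Two very small caveats: you should note explicitly that $\textup{Pr}_\Lambda\subseteq M_\Lambda$ and $(\omega_m(T_i))\textup{Div}_\Lambda\subseteq M_\Lambda$ so that the quotient in the statement actually makes sense (both inclusions are immediate from your description of $M_\Lambda$ as $\ker(\deg)$), and the phrase ``whose kernel is still the full ideal'' should read ``submodule'' since $(\omega_m(T_i))\textup{Div}_\Lambda$ is a $\Lambda$-submodule of $\textup{Div}_\Lambda$, not an ideal.
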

\begin{proof}
Let $\pi_m\colon \textup{Div}(X_\infty)\longrightarrow \textup{Div}(X_m)$ be the canonical map of voltage covers. Note that we can see this as an $R$-module homomorphism. We start by the following series of observations:
\begin{compactenum}[(a)]
    \item $\pi_m$ is surjective and induces surjective maps $\textup{Div}^0(X_\infty)\longrightarrow \textup{Div}^0(X_m)$ and ${\textup{Pr}(X_\infty)\longrightarrow \textup{Pr}(X_m)}$.
    \item The kernel of $\pi_m$ is given by $(\omega_m(T_1),\ldots, \omega_m(T_l))\textup{Div}_\Lambda\cap \textup{Div}(X_\infty)$.
    \item We have $(\omega_m(T_1),\ldots, \omega_m(T_l))\textup{Div}_\Lambda+\textup{Div}^0(X_\infty)=M_\Lambda$.
    \item $(\omega_m(T_1),\ldots, \omega_m(T_l))\textup{Div}_\Lambda+\textup{Pr}(X_\infty)=\textup{Pr}_\Lambda+(\omega_m(T_1),\ldots, \omega_m(T_l))\textup{Div}_\Lambda$. 
\end{compactenum}
Let us first see that this series of observations suffices to prove the theorem.
By points a) and b) we have an isomorphism
\[\textup{Div}^0(X_m)\cong \textup{Div}^0(X_\infty)/((\omega_m(T_1),\ldots, \omega_m(T_l))\textup{Div}_\Lambda\cap \textup{Div}^0(X_\infty)),\] where the right hand term is isomorphic to \[(\textup{Div}^0(X_\infty)+(\omega_m(T_1),\ldots, \omega_m(T_l))\textup{Div}_\Lambda)/(\omega_m(T_1),\ldots, \omega_m(T_l))\textup{Div}_\Lambda.\]
Combining this with c) and d), we have
\begin{align*}
    J(X_m)& = \textup{Div}^0(X_m) / \textup{Pr}(X_m) \\ 
    &=M_\Lambda/(\textup{Pr}_\Lambda+(\omega_m(T_1),\ldots, \omega_m(T_l))\textup{Div}_\Lambda)
\end{align*}
It remains to check the observations a)-d). In what follows we write ${|X| = n}$. For observation a) note that $v_{i,0}$, ${1\le i\le \vert X\vert}$, is a set of generators for $X_m$ (seen as an $R$-module). 
To see that it remains surjective when restricted to $\textup{Div}^0(X_\infty)$, it suffices to note that the elements ${v_{j,0}-v_{i,0}}$ and ${v_{j,\tau_k}-v_{j,0}}$ for a  set of topological generators ${\tau_1,\ldots, \tau_l}$ of $\Gal(X_\infty/X)$ generate $\textup{Div}^0(X_m)$ as well as $\textup{Div}^0(X_\infty)$ as $R$-modules. Using a similar argument for the generators $p_{i,0}$ instead shows the corresponding claim for $\textup{Pr}(X_\infty)$ and $\textup{Pr}(X_m)$.

For point b) note that $\textup{Div}_\Lambda$ is a free $\Lambda$-module of rank $\vert X\vert$ with generators $v_{j,0}$ for $1\le j\le \vert X\vert$. Furthermore, $\textup{Div}(X_m)$ is a free $R_m$-module with the same generators. As $\textup{Div}(X_\infty)$ contains the $R$-module generated by $v_{j,0}$, we see that $\textup{Div}(X_\infty)$ lies densely in $\textup{Div}_\Lambda$. We obtain the following isomorphisms, where we write ${n = \vert X \vert}$. 
\begin{align*}
\textup{Div}(X_m)&\cong \bigoplus_{i=1}^n R_m v_{i,0}\cong 
\bigoplus_{i=1}^n \Lambda/(\omega_m(T_1),\ldots ,\omega_m(T_l))\Lambda v_{i,0}\\ 
&\cong \textup{Div}_\Lambda/(\omega_m(T_1),\ldots , \omega_m(T_l))\\
&\cong \textup{Div}(X_\infty)/(\textup{Div}(X_\infty)\cap (\omega_m(T_1),\ldots,\omega_m(T_l))\textup{Div}_\Lambda),\end{align*} which proves claim b).

Now we prove c). By construction $\textup{Div}^0(X_\infty)\subseteq M_\Lambda$, and as our chosen generators of $M_\Lambda$ lie in $\textup{Div}^0(X_\infty)$, we see that $\textup{Div}^0(X_\infty)$ lies dense in $M_\Lambda$. Note that 
\[ (\omega_m(T_1),\ldots, \omega_m(T_l))\textup{Div}_\Lambda\] 
is a neighbourhood of $0$ in $M_\Lambda$. Let $y\in M_\Lambda$ be an arbitrary element, then there exists $z\in \textup{Div}^0(X_\infty)\cap (y+(\omega_m(T_1),\ldots, \omega_m(T_l))\textup{Div}_\Lambda)$. 
Thus, 
\begin{align*} y+(\omega_m(T_1),\ldots, \omega_m(T_l))\textup{Div}_\Lambda & \subseteq z+(\omega_m(T_1),\ldots, \omega_m(T_l))\textup{Div}_\Lambda \\ 
& \subseteq \textup{Div}^0(X_\infty)+(\omega_m(T_1),\ldots, \omega_m(T_l))\textup{Div}_\Lambda.\end{align*} 
Varying $y$ in $M_\Lambda$ gives the claim.

Claim d) can be proved similarly to point c) by using the generators $p_{i,0}$.
\end{proof}
\begin{lemma} \label{lemma:NundL} 
Let $L$ be a finitely generated $\Lambda$-torsion module and let $N\subseteq L$ be such that $L_m:=(\omega_m(T_1),\ldots, \omega_m(T_l))L\subseteq N$ for each $m$. Assume that $N/L_m$ is finite for all $m$. Then there are non-negative integers $m_0$ and $l_0$ such that
\begin{align} v_p(\vert N/L_m\vert) = m_0 p^{ml}+l_0 m p^{m(l-1)}+O(p^{m(l-1)}). \label{eq:star} \end{align} 
\end{lemma}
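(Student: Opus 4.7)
The plan is to deduce the formula from the asymptotic of Cuoco and Monsky (\cite{cuoco-monsky}) applied to the finitely generated torsion $\Lambda$-module $L$, with $m_0$ and $l_0$ equal to the generalised Iwasawa invariants $m_0(L)$ and $l_0(L)$ defined in Section~\ref{section:notation_iw-modules}.

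The key structural observation is that $L/N$ is a finitely generated $\Z_p$-module. Indeed, specialising the hypothesis to $m=0$ gives $(T_1,\ldots,T_l)L=L_0\subseteq N$, so each $T_i$ annihilates $L/N$. Write $L/N\cong\Z_p^r\oplus A$ with $A$ a finite $p$-group. The short exact sequence
\[ 0\longrightarrow N/L_m\longrightarrow L/L_m\longrightarrow L/N\longrightarrow 0, \]
combined with the finiteness of $N/L_m$, then implies that $L/L_m$ is a finitely generated $\Z_p$-module of $\Z_p$-rank exactly $r$ (independent of $m$). Moreover, the torsion subgroup fits into an exact sequence
\[ 0\longrightarrow N/L_m\longrightarrow (L/L_m)_{\textup{tors}}\longrightarrow A\longrightarrow 0, \]
since any $a\in A$ lifts to $L/L_m$ and, as $N/L_m$ is finite, the lift has some finite order. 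Hence $|(L/L_m)_{\textup{tors}}|=|N/L_m|\cdot|A|$, giving
\[ v_p\bigl(|N/L_m|\bigr)=v_p\bigl(|(L/L_m)_{\textup{tors}}|\bigr)+O(1). \]

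Next I would invoke the Cuoco--Monsky theorem in the form
\[ v_p\bigl(|(L/L_m)_{\textup{tors}}|\bigr)=m_0(L)\,p^{ml}+l_0(L)\,m\,p^{m(l-1)}+O\bigl(p^{m(l-1)}\bigr) \]
for $m$ sufficiently large, with invariants read off from the characteristic ideal of $L$. Combining with the previous identity yields \eqref{eq:star} with $m_0:=m_0(L)$ and $l_0:=l_0(L)$.

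The main obstacle is justifying the use of Cuoco--Monsky in precisely this torsion-subgroup form: their classical statement is made under the assumption that $L/L_m$ itself is finite, while in our situation $L/L_m$ may have positive $\Z_p$-rank $r$ (and in fact does so in the graph application of Theorem~\ref{thm:jacobian}, where $L=\textup{Div}_\Lambda/\textup{Pr}_\Lambda$ and $L/N\cong\Z_p$ via the degree map). The required refinement is not really a new result: one reduces to an elementary $\Lambda$-module pseudo-isomorphic to $L$, separates the $\Z_p$-free and $p$-primary torsion contributions of each summand $\Lambda/(p^{e_i})$ and $\Lambda/(g_j)$, and applies their arguments to the torsion pieces individually. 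Since the $\Z_p$-rank of $L/L_m$ is bounded independently of $m$, the free contribution is harmless and the asymptotic carries over verbatim to $(L/L_m)_{\textup{tors}}$.
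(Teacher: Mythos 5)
Your argument is correct and follows essentially the same route as the paper's: both reduce $v_p(|N/L_m|)$ to $v_p\bigl(|(L/L_m)[p^\infty]|\bigr)+O(1)$ and then invoke Cuoco--Monsky. The bookkeeping differs slightly --- you work with the short exact sequence $0\to N/L_m\to L/L_m\to L/N\to 0$ and the decomposition $L/N\cong\Z_p^r\oplus A$, whereas the paper observes $|N/L_m|=|N/L_0|\cdot|L_0/L_m|$ and compares torsion subgroups of $L/L_m$ and $L/L_0$ --- but these are the same computation in different clothing.

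The one thing worth flagging is that the ``main obstacle'' you raise at the end is not actually an obstacle. You worry that Cuoco--Monsky's asymptotic requires $L/L_m$ itself to be finite and so would need a separate refinement to cover $(L/L_m)_{\textup{tors}}$. In fact \cite[Theorem~3.4]{cuoco-monsky} is stated precisely for the $p$-primary torsion subgroup $(L/L_m)[p^\infty]$, under the hypothesis that $\Z_p\textup{-rank}(L/L_m)=O(p^{m(l-2)})$; it does not assume finiteness of $L/L_m$. This is exactly why the paper's proof ends with the parenthetical remark that in our situation the rank is even $O(1)$ rather than merely $O(p^{m(l-2)})$. So no ad hoc extension of Cuoco--Monsky is needed --- the cited theorem already applies verbatim, and your proof is complete once you replace the speculative last paragraph with a direct appeal to that statement.
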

\begin{proof}
Since $N/L_m$ is finite for each $m$, the quotient $L_0/L_m$ is finite for all $m$. In particular, $L/L_m$ and $L/L_{0}$ have the same $\Z_p$-rank. 

To compute $v_p(\vert N/L_m \vert)$ note that we have
\[\vert N/L_m\vert = \vert N/L_0\vert \cdot \vert L_0/L_m\vert. \]
So it remains to compute $\vert L_0/L_m\vert$. 
Since $L/L_m$ and $L/L_0$ have the same $\Z_p$-rank and therefore the kernel of the natural surjective map ${L/L_m \longrightarrow L/L_0}$ is contained in the $p$-torsion of $L/L_m$, we deduce
\[\vert L_0/L_m\vert = \frac{\vert (L/L_m)[p^\infty]\vert }{\vert (L/L_0)[p^\infty] \vert}.\]  
The desired result follows directly from \cite[Theorems~3.4]{cuoco-monsky} (note that in our case we even have $\Z_p\textup{-rank}(L/L_m)=O(1)$ instead of $O(p^{m(l-2)})$).
\end{proof}

This lemma can be applied, according to Theorem~\ref{thm:jacobian}, to the following $\Lambda$-modules: consider ${N = M_\Lambda/\textup{Pr}_\Lambda}$ and ${L = \textup{Div}_\Lambda/\textup{Pr}_\Lambda}$. Then 
\[ L_m := (\omega_m(T_1), \ldots, \omega_m(T_l)) \cdot L \] 
is contained in $N$ for each ${m \in \N}$, and 
\[ N/L_m \cong J(X_m) \] 
is finite for each $m$ by Theorem~\ref{thm:jacobian}. 

In this case, we obtain the following arithmetic description of the numbers $m_0$ and $l_0$ from~\eqref{eq:star}: 
\begin{cor} \label{cor:zusammenhangzuN} 
If $l\ge 2$, then the parameters $m_0$ and $l_0$ from~\eqref{eq:star} are the generalised Iwasawa invariants of $L$. Since ${L/N \cong \Z_p}$ is pseudo-null over $\Lambda$, these are also the generalised Iwasawa invariants of $N$. 

If $l=1$, then we obtain the Iwasawa invariants of $N$. In particular, $m_0$ equals the $\mu$-invariant of $L$, while ${l_0=\lambda(N) = \lambda(L)-1}$ in this case.
\end{cor}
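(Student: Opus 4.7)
The plan is to combine Lemma~\ref{lemma:NundL} with the Cuoco--Monsky asymptotic formula \cite[Theorem~3.4]{cuoco-monsky}, which expresses $v_p(\vert(L/L_m)[p^\infty]\vert)$ in terms of the generalised Iwasawa invariants $m_0(L)$ and $l_0(L)$, and then to compare the invariants of $L$ and $N$. The common starting point is the identity established in the proof of Lemma~\ref{lemma:NundL},
\[ v_p(\vert N/L_m\vert) = v_p(\vert N/L_0\vert) + v_p(\vert(L/L_m)[p^\infty]\vert) - v_p(\vert(L/L_0)[p^\infty]\vert), \]
so that the $m=0$ contributions are merely additive constants, while Cuoco--Monsky yields the leading behaviour $v_p(\vert(L/L_m)[p^\infty]\vert) = m_0(L) p^{ml} + l_0(L) m p^{m(l-1)} + O(p^{m(l-1)})$.

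For $l \ge 2$, the additive constants are absorbed into the error term $O(p^{m(l-1)})$, so comparison with \eqref{eq:star} immediately yields ${m_0 = m_0(L)}$ and ${l_0 = l_0(L)}$. To transfer these to $N$, I would first verify that ${L/N \cong \Z_p}$ directly from the definitions: the generators $v_{i,0}-v_{j,0}$ and $(T_1,\ldots,T_l)\textup{Div}_\Lambda$ of $M_\Lambda$ collapse $\textup{Div}_\Lambda$ to a single $\Z_p$-generator, and the $p_{i,0}$ then map to zero because their coefficients sum to zero. For $l \ge 2$, the module ${\Z_p \cong \Lambda/(T_1,\ldots,T_l)}$ is annihilated by the coprime elements $T_1$ and $T_2$, hence pseudo-null. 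Consequently the inclusion ${N \hookrightarrow L}$ is a pseudo-isomorphism, and the generalised Iwasawa invariants of $N$ agree with those of $L$.

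For $l=1$ the identification ${L/N \cong \Z_p}$ still holds, but now $\Z_p = \Lambda/(T)$ is no longer pseudo-null: it has $\mu$-invariant $0$ and $\lambda$-invariant $1$. Multiplicativity of the characteristic ideal on the short exact sequence ${0 \to N \to L \to \Z_p \to 0}$ therefore gives $\mu(L)=\mu(N)$ and $\lambda(L)=\lambda(N)+1$. To obtain the numerical identification, one must recall that in one variable the formula for the $p^\infty$-torsion of $L/\omega_m L$ is corrected by the $\Z_p$-rank of $L/\omega_m L$; in our setting the sequence ${0 \to N/L_m \to L/L_m \to \Z_p \to 0}$ with $N/L_m$ finite forces this rank to equal $1$. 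Thus the leading coefficients of $v_p(\vert N/L_m\vert)$ are $\mu(L)p^m + (\lambda(L)-1)m + O(1)$, yielding $m_0=\mu(L)=\mu(N)$ and $l_0=\lambda(L)-1=\lambda(N)$.

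The main obstacle is the delicate bookkeeping in the $l=1$ case. The subtle point is that Iwasawa's classical formula in the form $v_p(\vert X/\omega_m X\vert) = \mu(X) p^m + \lambda(X) m + O(1)$ applies only when $X/\omega_m X$ is finite, while in our situation $L/L_m$ is genuinely infinite precisely because of the $\Z_p$-quotient $L/N$. This extra torsion-free rank of $1$ must be subtracted from $\lambda(L)$ to recover the true $\lambda$-invariant governing the growth of the finite quotient $N/L_m$, which explains the numerical shift ${l_0 = \lambda(L)-1 = \lambda(N)}$.
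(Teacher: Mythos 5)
Your $l \ge 2$ argument is correct and matches the paper's approach: Cuoco--Monsky applied to $L$ (as already done in the proof of Lemma~\ref{lemma:NundL}) gives the leading terms, the constant contributions $v_p(|N/L_0|)$ and $v_p(|(L/L_0)[p^\infty]|)$ are absorbed into $O(p^{m(l-1)})$, and the pseudo-nullity of $L/N \cong \Z_p = \Lambda/(T_1,\ldots,T_l)$ for $l \ge 2$ transfers the invariants from $L$ to $N$.

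The $l = 1$ case is where your route departs from the paper's, and here there is a gap. You invoke a ``rank-corrected'' formula $v_p(|(L/\omega_m L)[p^\infty]|) = \mu(L)p^m + (\lambda(L)-r)m + O(1)$, with $r$ the stable $\Z_p$-rank of $L/\omega_m L$, but you give neither a proof nor a reference; this is \emph{not} Iwasawa's classical formula, which covers only the case in which $L/\omega_m L$ is finite. The claim is in fact true, but justifying it essentially requires rederiving the asymptotics from the structure theorem and tracking the $p^\infty$-torsion of $\Lambda/(g_j,\omega_m)$ when $g_j$ and $\omega_m$ share a factor. The paper sidesteps this entirely: since $\varprojlim_m N/L_m \cong N$ (because $\cap_m I_m = \{0\}$), $L_m = \nu_{m,0}\,L_0$ with $\nu_{m,0} = \omega_m/\omega_0$, and $N/L_0$ is finite, the Serre/Washington form of Iwasawa's growth theorem applies to the pair $(N,L_0)$ and yields $v_p(|N/L_m|) = \mu(N)p^m + \lambda(N)m + O(1)$ directly, so that $m_0 = \mu(N)$ and $l_0 = \lambda(N)$ come out immediately with no rank correction at all. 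The identities $\mu(N) = \mu(L)$ and $\lambda(N) = \lambda(L)-1$ then follow, as you correctly observed, from multiplicativity of characteristic ideals along $0 \to N \to L \to \Z_p \to 0$. If you wish to retain your $L$-based route, you should at minimum prove or cite the rank-corrected formula, whose proof ultimately reduces to the same Serre-type argument.
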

\begin{proof}
If $l\ge 2$ this follows directly from the above proof. In the case $l=1$ it is a straight forward computation over one-dimensional Iwasawa algebras (note that ${\varprojlim_m N/I_m \cong N}$, since ${\cap_m I_m = \{0\}}$).
\end{proof}

\section{Computation of the order of $J(X_m)$} \label{section:linearalgebra} 
Let $X$ be a finite graph, and let $\alpha \colon S\to \Z_p^l$ be as in Section~\ref{section:notation}.
Recall the definition of the $\Z_p$-linear map $\mathcal{L}_\alpha$ from Section~\ref{section:notation}. We can extend naturally the linear map $\mathcal{L}_\alpha$ to a $\Lambda$-linear map.
Recall that $\textup{Div}_\Lambda\cong \Lambda^n$ with basis $\{ v_{1,0}, \ldots, v_{n,0}\}$. We would like to give a matrix-representation of $\mathcal{L}_\alpha$ with respect to this basis. Let $D$ be the degree matrix of the base graph $X$ and define the voltage assignment matrix $A_\alpha=(\alpha_{i,j})_{1\le i,j\le n}$ as follows: 
\[\alpha_{i,j}=\begin{cases}
    \sum _{\{ e(v_i,v_j)\in E(v_i,v_j)\}}\alpha(e(v_i,v_j)) \quad & \textup{if } i\neq j\\
    \sum_{e(v_i,v_i)\in E(v_i,v_i)\}}\alpha(e(v_i,v_i))+\alpha^{-1}(e(v_i,v_i)) \quad & \textup{if } i=j
\end{cases}.\] 
Then $\alpha_{i,j} \in \Lambda$ for all $i,j$. It is easy to verify that $\mathcal{L}_\alpha$ is represented by the matrix $\Delta_\infty=(D-A_\alpha)^t$ (here we use the $v_{i,0}$ as a basis, and we denote by $M^t$ the transpose of a matrix $M$). Recall that 
\[p_{v_i,0}=-\sum_{e\in E_{v_i}(X_\infty)}(\alpha(e)t(e)-v_i)=\deg(v_i)v_i-\sum_{j=1}^n \alpha_{i,j}v_j,\]
which is represented by the $i$-th row of $D-A_\alpha$ and not by the $i$-th column. Note that this ambiguity does not occur if we understand $\mathcal{L}_\alpha$ only as a $\Z_p$-linear map. In this case a $\Z_p$-basis of $\textup{Div}(X_m)$ is $\{(v_i,g)\mid v_i\in V(X), g\in (\Z/p^m\Z)^l\}$,  $\mathcal{L}_\alpha$ is represented by a symmetric matrix and we do not have to consider the transpose.

In what follows, we fix some ${m \in \N}$. Recall the definition of ${R_m = R/I_m}$ from Section~\ref{section:notation_iw-modules}. If we define $\Delta^t_m$ to be the representing matrix for the map 
\[ \mathcal{L}_\alpha \colon \textup{Div}(X_m) \longrightarrow \textup{Div}(X_m) \]  
(again in the $R_m$-basis  $v_{i,0}$), we obtain that $\Delta_m$ is the image of $\Delta_\infty$ in $\textup{Mat}_{n,n}(R_m)$.

Let $\Omega$ be the group of all $p$-power roots of unity in some fixed algebraic closure $\overline{\Q_p}$ of $\Q_p$. We say that two elements in $\Omega^l$ are equivalent if they are Galois conjugate to each other.
Using that $\textup{Div}_\Lambda$ is $\Lambda$-free of rank $n$, we can define 
\[\phi_m\colon \textup{Div}_\Lambda/I_m\longrightarrow (\bigoplus \Z_p[\zeta])^n, \] 
where each component map is defined as in \cite{cuoco-monsky}. Here the sum on the right hand side runs over all equivalence classes of $\Omega^l[p^m]$.
Let $\tilde{\Lambda}_m$ be the image of $\phi_m$. Note that by \cite[Lemma 2.1]{cuoco-monsky} $\phi_m$ has a finite cokernel. Recall that $\textup{Pr}_\Lambda=\mathcal{L}_\alpha(\textup{Div}_\Lambda)$ and that $\Delta^t_\infty$ is a matrix representing the map $\mathcal{L}_\alpha$. 

It is immediate that $\mathcal{L}_\alpha(I_m\textup{Div}_\Lambda)\subseteq I_m\textup{Div}_\Lambda$. So $\mathcal{L}_\alpha$ induces well-defined maps on $\textup{Div}_\Lambda/I_m$. By abuse of notation, we denote these maps by $\mathcal{L}_\alpha$ again, and we define a map $\tilde{\mathcal{L}_\alpha}$ on $\tilde{\Lambda}_m$ such that
\[\phi_m\circ \mathcal{L}_\alpha=\tilde{\mathcal{L}_\alpha}\circ \phi_m\] 
on $\textup{Div}_\Lambda/I_m$. 
Note that $\tilde{\mathcal{L}_\alpha}$ is a linear map of $\Z_p$-modules. 
Recall that $\phi_m$ has finite cokernel. So we can extend $\tilde{\mathcal{L}_\alpha}$ to a linear map of $\tilde{\Lambda}_m\otimes \Q_p=(\oplus_{z=1}^k\Q_p[\zeta_z])^n$, where $\zeta_1, \ldots, \zeta_k$ represent the different equivalence classes in $\Omega^l[p^m]$ modulo Galois equivalence.

For every matrix $A$ with coefficients ${a_{i,j} \in \Lambda}$ and each ${\zeta \in \Omega^l}$ we denote by $A(\zeta - 1)$ the matrix with coefficients ${a_{i,j}(\zeta-1) \in \Z_p[\zeta]}$ (i.e. we replace $T_i$ by the $i$-th component of $\zeta - 1$). 
\begin{lemma}
  We can choose a basis of $\tilde{\Lambda}_m\otimes \Q_p$ such that $\tilde{\mathcal{L}_\alpha}$ is represented by a $p^{ml}n\times p^{ml}n$-block matrix.
 \[A_m= \begin{pmatrix}
  \Delta^t_\infty(\zeta_1-1)&0&\dots &\dots&\dots&0\\
 0& \Delta^t_\infty(\zeta_1-1)&\dots &\dots&\dots&0\\
  \dots&\dots&\dots&\dots&\dots&\dots\\
  0&0&\dots&\Delta^t_\infty(\zeta_2-1)&\dots&0\\
  \dots&\dots&\dots&\dots&\dots&\dots\\
  0&\dots&\dots&\dots&0&\Delta^t_\infty(\zeta_{k}-1)
  \end{pmatrix},\]
  where $\zeta_1,\ldots ,\zeta_k$ represent the different equivalence classes in $\Omega^l[p^m]$ modulo Galois equivalence and each block $\Delta_\infty^t(\zeta_i-1)$ occurs $\varphi(\textup{ord}(\zeta_i))$ times, where $\varphi$ denotes Euler's totient function. 
\end{lemma}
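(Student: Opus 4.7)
The plan is to exploit the fact that $\phi_m$ is a product of evaluation maps: the $z$-th component of $\phi_m$ is the ring homomorphism $\Lambda/I_m \to \Z_p[\zeta_z]$ sending $T_i$ to $\zeta_z^{(i)} - 1$, where $\zeta_z = (\zeta_z^{(1)}, \ldots, \zeta_z^{(l)})$ is the chosen representative of the $z$-th Galois equivalence class in $\Omega^l[p^m]$. Since $\mathcal{L}_\alpha$ acts on $\textup{Div}_\Lambda$ by left multiplication by the matrix ${\Delta_\infty^t \in \textup{Mat}_{n,n}(\Lambda)}$, the intertwining relation ${\phi_m \circ \mathcal{L}_\alpha = \tilde{\mathcal{L}_\alpha} \circ \phi_m}$ forces $\tilde{\mathcal{L}_\alpha}$ to act on the $z$-th summand ${\Q_p[\zeta_z]^n}$ of $\tilde{\Lambda}_m \otimes \Q_p$ by left multiplication by ${\Delta_\infty^t(\zeta_z - 1) \in \textup{Mat}_{n,n}(\Q_p[\zeta_z])}$. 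This is the first step and is a formal consequence of the $\Lambda$-linearity of $\mathcal{L}_\alpha$.

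Having settled the action on each summand, the second step is to pick an appropriate basis of $\tilde{\Lambda}_m \otimes \Q_p$ realising the announced block structure. After base change to $\overline{\Q_p}$, the decomposition ${\Q_p[\zeta_z] \otimes_{\Q_p} \overline{\Q_p} \cong \prod_{\sigma} \overline{\Q_p}}$ indexed by the ${\varphi(\textup{ord}(\zeta_z))}$ embeddings ${\sigma \colon \Q_p[\zeta_z] \hookrightarrow \overline{\Q_p}}$ yields ${\Q_p[\zeta_z]^n \otimes_{\Q_p} \overline{\Q_p} \cong \bigoplus_\sigma \overline{\Q_p}^n}$, and on the $\sigma$-component $\tilde{\mathcal{L}_\alpha}$ acts by ${\Delta_\infty^t(\sigma(\zeta_z) - 1)}$, which is Galois-conjugate to ${\Delta_\infty^t(\zeta_z - 1)}$ and thus counts as a ``copy'' of it in the sense of the lemma. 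Concatenating these $\varphi(\textup{ord}(\zeta_z))$ blocks over all classes ${z = 1, \ldots, k}$ and recalling ${\sum_z \varphi(\textup{ord}(\zeta_z)) = |\Omega^l[p^m]| = p^{ml}}$ produces the block diagonal matrix $A_m$ of total size $np^{ml}$; the $\Z_p$-rank equality ${\textup{rank}_{\Z_p}(\tilde{\Lambda}_m) = np^{ml}}$ (which follows from $\phi_m$ having finite cokernel) confirms that this basis spans the whole space.

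The main obstacle is the bookkeeping required to make precise the sense in which the $\varphi(\textup{ord}(\zeta_z))$ Galois-conjugate blocks should be viewed as ``copies'' of a single matrix ${\Delta_\infty^t(\zeta_z - 1)}$: strictly speaking, they have entries in distinct embeddings of $\Z_p[\zeta_z]$ into $\overline{\Q_p}$, but they share characteristic polynomial and determinant, which is what is ultimately needed to read off arithmetic information such as ${|J(X_m)|}$ from $\det(A_m)$ in the subsequent sections. In particular the identification is harmless for the determinantal applications that motivate the lemma.
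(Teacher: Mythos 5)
Your proposal is correct and captures the same essential ideas as the paper's proof, but it reorganizes them more transparently and addresses a subtlety the paper glosses over, so a comparison is worthwhile.

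The paper establishes that $\tilde{\mathcal{L}_\alpha}$ preserves the summand indexed by $\zeta_j$ by a somewhat laborious argument: it chooses elements $F_j$ whose image under $\phi_m$ is supported at a single $\zeta_j$ and tracks the image under $\tilde{\mathcal{L}_\alpha}$, then picks the $\Q_p$-basis $\{\zeta_j^w \phi_m(v_{i,0})\}$ via polynomials $G_w$ interpolating the powers of $\zeta_j$. Your first step observes that the diagonal action is simply read off from the explicit form ${\phi_m(F) = (F(\zeta_z-1))_z}$ and the intertwining relation, which is indeed a cleaner and equally rigorous route (that $\tilde{\mathcal{L}_\alpha}$ is well defined on $\tilde{\Lambda}_m$ needs $\phi_m$ injective, which the paper also uses; the extension to $\tilde{\Lambda}_m \otimes \Q_p$ uses finiteness of the cokernel, as you note).

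On the second step your handling is actually \emph{more} careful than the paper's. The paper writes the basis $\{\zeta_j^w a \phi_m(v_{i,0})\}_{w,i}$ and asserts that $\tilde{\mathcal{L}_\alpha}$ is represented by the block-diagonal matrix with the block $\Delta_\infty^t(\zeta_j-1)$ repeated $\varphi(\textup{ord}(\zeta_j))$ times. Taken literally as a $\Q_p$-matrix this is not quite right: the entries of $\Delta_\infty^t(\zeta_j-1)$ lie in $\Z_p[\zeta_j]$, not in $\Z_p$, and multiplication by such an entry does not preserve the $\Q_p$-span of $\zeta_j^w \phi_m(v_{i,0})$ for fixed $w$. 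What is true — and what the subsequent determinant computation actually uses — is that the $\Q_p$-linear map on $\Q_p[\zeta_j]^n$ given by multiplication by $\Delta_\infty^t(\zeta_j-1)$ has, after base change to $\overline{\Q_p}$, the block decomposition into the $\varphi(\textup{ord}(\zeta_j))$ Galois conjugates $\Delta_\infty^t(\sigma(\zeta_j)-1)$; equivalently, its determinant in $\Q_p$ is the norm $N_{\Q_p[\zeta_j]/\Q_p}(\det \Delta_\infty^t(\zeta_j-1))$, which has the same $p$-adic valuation as $\varphi(\textup{ord}(\zeta_j)) \cdot v_p(\det \Delta_\infty^t(\zeta_j-1))$. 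You make exactly this point explicitly, noting that the ``copies'' are Galois conjugates and that this identification is harmless for the determinantal applications, which is what the next lemma requires. In short: same strategy, but your proposal spells out the Galois-conjugation issue the lemma statement and the paper's proof leave implicit.
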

\begin{proof}
  Let $F$ be an element in $\textup{Div}_\Lambda$. 
  Note that $\phi_m\circ \mathcal{L}_\alpha(F)$ is a vector with components $\Delta^t_\infty(\zeta_i-1)F(\zeta_i-1)$. 
  
We fix an index $j$ ($1\le j\le k$) and let $F_jv_{i,0}\in \Lambda v_{i,0}$ be such that $\phi_m(F_jv_{i,0})$ vanishes at all $\zeta_z-1$ for $z\neq j$ and is an element $a\in \Z_p\setminus\{0\}$ at $\zeta_j-1$. Let $G$ be an arbitrary element in $\Lambda$. Then we have
  \[\tilde{\mathcal{L}_\alpha}(\phi_m(GF_jv_{i,0}))=\phi_m(\mathcal{L}_\alpha (GF_jv_{0, i}))=\phi_m(GF_j \Delta^t_\infty v_{i,0}).\] 
  Recall that $G(\zeta-1)F_j(\zeta-1)=0$ for all $\zeta$ that do not lie in the equivalence class of $\zeta_j$. Thus
  \[G(\zeta-1)F_j(\zeta-1)\Delta^t_\infty(\zeta-1)=0\]
  for all these $\zeta$. In particular, $\phi_m(GF_j\Delta^t_\infty v_{i,0})$ has only one non-trivial component, namely at $\zeta_j-1$. Note that $\phi_m((\Lambda/I_m) F_jv_{i,0})$ has finite index in $\Z_p[\zeta_j]$. 
  Using the fact that $\bigoplus_{z=1}^k\Z_p[\zeta_z]$ is $\Z_p$-free, we have shown that if we start with an element in $\tilde{\Lambda}_m$ that is non-trivial at exactly one $\zeta_j-1$, then the same holds for its image under $\tilde{\mathcal{L}_\alpha}$.
  From this we can deduce that $\tilde{\mathcal{L}_\alpha}(\Q_p[\zeta_j])^n\subseteq (\Q_p[\zeta_j])^n$.
  
  It remains to compute the corresponding matrix. Let $G_1,\ldots ,G_{\varphi(\textup{ord}(\zeta_j))-1}$ be polynomials such that $G_w(\zeta_j-1)=\zeta_j^w$ and let $G_0=1$. Note that $\phi_m(G_wF_jv_{i,0})$ for $0\le w\le \varphi(\textup{ord}(\zeta_j))-1$ spans a sublattice of finite index in $\Z_p[\zeta_j]$. Using the above computations (with $G_w$ for $G$) we see that
  \[\tilde{\mathcal{L}_\alpha}(\zeta_j^wa\phi_m(v_{i,0}))=\sum_{u=1}^n\Delta^t_\infty(\zeta_j-1)\zeta_j^wa\phi_m(v_{u,0}).\] 
  Therefore, we see that the block $\Delta^t_\infty(\zeta_j-1)$ has to occur $\varphi(\textup{ord}(\zeta_j))$ times in the matrix representing $\tilde{\mathcal{L}_\alpha}$ on $\tilde{\Lambda}_m\otimes \Q_p$. 
\end{proof}
\begin{lemma}
  \[v_p(\vert J(X_m)\vert)=\sum_{\zeta\neq 1}v_p(\det(\Delta^t_\infty(\zeta- 1)))+v_p(|J(X_0)|). \]
\end{lemma}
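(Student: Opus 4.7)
The plan is to combine the block decomposition of $\tilde{\mathcal{L}_\alpha}$ from the previous lemma with Kirchhoff's matrix-tree theorem. The matrix of $\mathcal{L}_\alpha$ in the vertex basis of $\textup{Div}(X_m)$ is the Laplacian of the multigraph $X_m$ (loops contributing trivially to the diagonal, per the paper's convention). Since $X_m$ is assumed connected, this matrix has a one-dimensional kernel spanned by $\sum_v v$, and matrix-tree identifies $|J(X_m)|$ with any cofactor; equivalently, $\mathcal{L}_\alpha$ descends to an injective $\Z_p$-linear map
\[
\bar{\mathcal{L}}_\alpha\colon \textup{Div}(X_m)\big/\Z_p\Bigl(\textstyle\sum_v v\Bigr)\longrightarrow \textup{Div}^0(X_m)
\]
of free $\Z_p$-modules of rank $np^{ml}-1$ with cokernel $J(X_m)$, so $v_p(|J(X_m)|)$ is the $p$-adic valuation of $\det(\bar{\mathcal{L}}_\alpha)$ computed in any choice of $\Z_p$-bases.

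Next I would tensor with $\Q_p$ and apply the previous lemma. Under $\phi_m$ the operator $\bar{\mathcal{L}}_\alpha\otimes\Q_p$ becomes block-diagonal with blocks $\Delta_\infty^t(\zeta-1)$ indexed by Galois orbits of $\zeta\in\Omega^l[p^m]$. Crucially, $\sum_v v$ lies in the $\zeta=1$ component only (as a scalar multiple of $(1,\ldots,1)$), and the degree functional also picks out only the $\zeta=1$ component, so the quotient and the subspace structure respect the block decomposition. At $\zeta=1$ the block is the reduced Laplacian of the base graph $X_0$, contributing $|J(X_0)|$ by matrix-tree applied to $X_0$. For $\zeta\neq 1$, the block $\Delta_\infty^t(\zeta-1)$ is invertible---any kernel vector would lie in the one-dimensional kernel of $\mathcal{L}_\alpha$, which sits entirely in the $\zeta=1$ block---and contributes $\det(\Delta_\infty^t(\zeta-1))$. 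Here the $\Z_p$-determinant of a $\Z_p[\zeta]$-linear block equals $N_{\Q_p(\zeta)/\Q_p}(\det(\Delta_\infty^t(\zeta-1)))$, i.e.\ the product of $\det(\Delta_\infty^t(\zeta'-1))$ over the Galois orbit of $\zeta$. Multiplying across orbits and taking $v_p$ produces the stated identity.

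The main obstacle is to reconcile the integral structures on both sides of $\phi_m$. The image $\tilde{\Lambda}_m=\phi_m(\textup{Div}(X_m))$ has nontrivial finite index in $\bigoplus_\zeta\Z_p[\zeta]^n$, and $\phi_m(\sum_v v)=p^{ml}(1,\ldots,1)$ in the $\zeta=1$ component is far from primitive in the ambient direct sum; one therefore cannot simply identify natural $\Z_p$-bases between the two sides. The key computation is to verify that the index contributions in the source (after quotienting by $\Z_p(\sum_v v)$) and in the target (after imposing the degree-zero condition) have the same $p$-adic valuation and therefore cancel in $\det(\bar{\mathcal{L}}_\alpha)$, so that no spurious powers of $p$ remain. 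This kind of index cancellation is reminiscent of the Artin--Ihara factorisation of zeta functions for Galois covers of graphs, and carrying it out carefully in the integral setting is the most delicate part of the argument.
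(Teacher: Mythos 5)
Your proposal follows the same strategy as the paper's proof: pass through $\phi_m$ to the block decomposition, isolate the singular $\zeta=1$ block, and assemble the answer from the invertible blocks together with a matrix-tree computation on the base graph. Your observation that the $\zeta\neq 1$ blocks are invertible because $\ker(\mathcal{L}_\alpha)$ sits entirely inside the $\zeta=1$ component is exactly the right point, as is the remark that the $\Z_p$-determinant of a $\Z_p[\zeta]$-linear block is the corresponding norm. So there is no disagreement in the overall plan.

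However, the gap you flag yourself in the final paragraph is genuine and you do not close it, so the proposal is incomplete. The issue is not a mere bookkeeping annoyance: a naive application of the block decomposition produces the \emph{pseudo-determinant} $\prod_{\lambda\neq 0}\lambda$ of the Laplacian, whereas the quantity you want is the cofactor $\kappa(X_m)=\frac{1}{|V(X_m)|}\prod_{\lambda\neq 0}\lambda$; the difference is a factor of $|V(X_m)|=np^{ml}$, and in the present setting $v_p$ of that factor grows like $ml$, so it cannot be discarded as a constant. Precisely the same factor appears in your observation that $\phi_m(\sum_v v)=p^{ml}(1,\ldots,1)$ fails to be primitive. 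Asserting that "the index contributions cancel" without computation is therefore risky: one must track how the lattices $\phi_m(\textup{Div}(X_m))/\phi_m(\Z_p\sum_v v)$ and $\phi_m(\textup{Div}^0(X_m))$ sit inside $\bigoplus_\zeta\Z_p[\zeta]^n$ relative to the product lattice, and see exactly where the extra $p^{ml}$ lands and what it cancels against in the $\zeta=1$ block. Until that computation is exhibited, the argument shows only that the two sides of the claimed identity agree up to a possible $p^{O(ml)}$ correction, which is not enough. (The paper's own proof is terse at precisely this step: it identifies $J(X_m)$ with the torsion of $\textup{Div}(X_m)/\Delta_m^t\textup{Div}(X_m)$, passes to $\tilde{\Lambda}_m$ via $\phi_m$, and then reads off $v_p(|J(X_m)|)$ from $\det(B_m')$ where $B_m'$ is the induced map on $\tilde{\Lambda}_m/\ker(A_m')$, again asserting without detailed justification that the torsion order equals $v_p(\det B_m')$ and that $v_p(\det B_0)=v_p(|J(X_0)|)$. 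So your instinct that the integral normalisation is the delicate part is well placed, and a complete write-up would need to supply it for either version of the argument.)
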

\begin{proof}
Recall that $N/I_mL$ is finite for all $m$ and that $L/N$ is annihilated by $(T_1,\ldots, T_l)$ and isomorphic to $\Z_p$. It follows that $L/I_mL$ has $\Z_p$-rank one. By the definition of $L$ we see that \[L/I_m L=\textup{Div}_\Lambda/(I_m+\textup{Pr}_\Lambda)=\textup{Div}(X_m)/\textup{Pr}(X_m)=\textup{Div}(X_m)/\Delta^t_m\textup{Div}(X_m).\] 
Therefore the kernel of the linear map given by multiplication with $\Delta^t_m$ has $\Z_p$-rank 1. Since $\phi_m$ is an injection, it follows that $A_m$ has rank $p^{ml}n-1$ for all $m$. In particular, the matrix $\Delta^t_\infty(0)$ has determinant zero, but all the matrices $\Delta^t_\infty(\zeta_i-1)$ have non-zero determinant. 

Fix a basis of $\tilde{\Lambda}_m$ and let $A'_m$ be the matrix describing $\tilde{\mathcal{L}_\alpha}$ in this basis. Then $A_m$ and $A'_m$ are conjugate. 
Moreover, $\textup{Div}(X_m)$ is a free $\Z_p$-module, and 
\[ \textup{Div}(X_m)/\Delta^t_m \textup{Div}(X_m) \cong \tilde{\Lambda}_m/A'_m \tilde{\Lambda}_m. \] 
We are interested in $(\textup{Div}(X_m)/\Delta^t_m\textup{Div}(X_m))[p^\infty]$. Let $B_m$ and $B'_m$ be the matrices representing 
\[\mathcal{L}_\alpha \colon (\textup{Div}_\Lambda/I_m \otimes \Q_p)/\ker(A_m)\longrightarrow (\textup{Div}_\Lambda/I_m \otimes \Q_p)/\ker(A_m)\]

and 
\[\tilde{\mathcal{L}_\alpha}\colon \tilde{\Lambda}_m/\ker(A'_m)\longrightarrow \tilde{\Lambda}_m/\ker(A'_m)\]
  in our chosen basis. Then $B_m$ and $B'_m$ are conjugate and we obtain 
 \[\vert(\textup{Div}(X_m)/\Delta^t_m\textup{Div}(X_m))[p^\infty]\vert=v_p(\det(B'_m))=v_p(\det(B_m)).\]
 
 We let $\zeta_k=1$. Then $B_m$ has the form
 \[\begin{pmatrix}
  \Delta^t_\infty(\zeta_1-1)&0&\dots &\dots&\dots&0\\
 0& \Delta^t_\infty(\zeta_1-1)&\dots &\dots&\dots&0\\
  \dots&\dots&\dots&\dots&\dots&\dots\\
  0&0&\dots&\Delta^t_\infty(\zeta_2-1)&\dots&0\\
  \dots&\dots&\dots&\dots&\dots&\dots\\
  0&\dots&\dots&\dots&0&B_0
  \end{pmatrix},\]
 where $v_p(\det(B_0))= v_p(\vert J(X_0)\vert)$.
Using the structure of the matrix $B_m$, the claim follows. 
\end{proof}

In particular, this proves Greenberg's conjecture, i.e. we obtain the following 
\begin{thm}
\label{thm:greenberg-conj}
    Greenberg's conjecture holds true for the growth of $v_p(|J(X_m)|)$, i.e. there exists a polynomial ${Q(X,Y) \in \Z_p[X,Y]}$ of total degree $l$ and degree at most one in the second variable such that 
    \[ v_p(|J(X_m)|) = Q(p^m,m) \] 
    for all sufficiently large ${m \in \N}$. 
\end{thm}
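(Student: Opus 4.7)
The plan is to build directly on the explicit valuation formula of the preceding lemma,
\[
v_p(|J(X_m)|) \;=\; v_p(|J(X_0)|) \;+\; \sum_{\zeta \in \Omega^l[p^m] \setminus \{1\}} v_p\!\bigl(F(\zeta - 1)\bigr),
\qquad F := \det(\Delta_\infty) \in \Lambda,
\]
and to exhibit a polynomial $Q(X,Y) \in \Z_p[X,Y]$ of the required shape that matches the right-hand side for large $m$. Note that the finiteness of each $J(X_m)$ forces $F(\zeta - 1) \neq 0$ for every ${\zeta \in \Omega^l \setminus \{1\}}$, so all terms in the sum are finite nonnegative rationals.

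First I would factor $F = p^{m_0} \cdot G$ with ${G \notin p\Lambda}$. The contribution of the $p^{m_0}$-factor is exactly ${m_0(p^{ml} - 1)}$, already a polynomial in $p^m$ of total degree $l$ and of degree $0$ in $m$. It therefore suffices to show that ${\Sigma_G(m) := \sum_{\zeta \neq 1} v_p(G(\zeta - 1))}$ is eventually polynomial in $p^m$ and $m$, with total degree at most $l$ and $m$-degree at most $1$.

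Second, I would stratify ${\Omega^l[p^m] \setminus \{1\}}$ by the order-tuple ${(a_1, \ldots, a_l)}$ with ${\textup{ord}(\zeta_i) = p^{a_i}}$. For a fixed tuple the number of $\zeta$ equals ${\prod_i \varphi(p^{a_i})}$, a polynomial in the $p^{a_i}$, and on each $\Gal(\overline{\Q_p}/\Q_p)$-orbit the valuation is constant. An iterated Weierstraß preparation applied to $G$ (one variable at a time, after a generic change of coordinates in $T_1, \ldots, T_l$) then shows that, for all order-tuples outside a bounded region, $v_p(G(\zeta - 1))$ depends affine-linearly on the $a_i$, with constant term and coefficients read off from the distinguished factors of the reduction $\bar{G}$ modulo $p$. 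Summing such contributions over ${0 \le a_i \le m}$ by means of the closed-form identities for ${\sum_k p^{jk}}$ and ${\sum_k k\, p^{jk}}$ then yields a polynomial in $p^m$ and $m$ whose total degree is at most $l$ (from the $l$-fold iterated sum) and whose $m$-degree is at most $1$ (contributed by the linear-in-$a_i$ factors). This also matches the leading behaviour $m_0 p^{ml} + l_0 m p^{m(l-1)}$ extracted from Cuoco--Monsky.

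The main obstacle lies in making the stabilisation step rigorous: one must show that outside a finite set of order-tuples the valuation $v_p(G(\zeta - 1))$ is indeed governed by the Weierstraß data, and that the finitely many exceptional tuples contribute a bounded quantity that can be absorbed into the constant term of $Q$. The hypothesis ${G \notin p\Lambda}$ is essential here, since without it $v_p(G(\zeta - 1))$ could grow faster than the Cuoco--Monsky bound along certain sequences of $\zeta$ and destroy polynomial behaviour. Managing the multivariate Weierstraß analysis in the presence of several competing distinguished directions is the technical heart of the argument.
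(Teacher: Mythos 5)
Your reduction to the valuation formula
\[
v_p(|J(X_m)|) \;=\; v_p(|J(X_0)|) \;+\; \sum_{\zeta \in \Omega^l[p^m]\setminus\{1\}} v_p\bigl(\det(\Delta^t_\infty(\zeta-1))\bigr)
\]
and the separate handling of the $p^{m_0}$-factor both match the paper's route exactly. The problem sits precisely in the step you yourself flag as the ``main obstacle'': the assertion that, after a generic coordinate change and iterated Weierstraß preparation, $v_p(G(\zeta-1))$ depends affine-linearly on the order exponents $(a_1,\dots,a_l)$ outside a bounded region. This is not true for a general $G \in \Lambda$ coprime with $p$, and if it were, the same argument would establish Greenberg's conjecture for arbitrary $\Z_p^l$-extensions of number fields, which remains open for $l \ge 3$. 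Already for $l=2$, $G = T_1T_2 + p$, and $\zeta_i$ of exact order $p^{a_i}$ with $a_i \ge 2$, one computes
\[
v_p\bigl(G(\zeta-1)\bigr) \;=\; \frac{1}{p^{a_1-1}(p-1)} + \frac{1}{p^{a_2-1}(p-1)},
\]
which is not affine-linear in $(a_1,a_2)$. That the \emph{summed} contribution still turns out to be polynomial in this example is an accident of multiplying by the Galois orbit size $\varphi(p^{a_1})\varphi(p^{a_2})$, not a feature of the per-$\zeta$ valuation you assert. So the iterated Weierstraß step, as formulated, does not close.

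What rescues the graph case, and what your sketch never invokes, is the special shape of $\det(\Delta_\infty)$: because $X$ is a finite graph, the entries of $\Delta_\infty$ lie in the (non-completed) group ring $\Z_p[\Z_p^l]$, so $\det(\Delta_\infty)$ is a finite $\Z_p$-linear combination of group elements rather than an arbitrary element of $\Lambda$ --- equivalently, it $p$-adically interpolates $P_\psi(1)$ for the Ihara $L$-polynomials $P_\psi(u)$, which are honest polynomials in $u$. It is this finiteness that makes the valuation sum eventually a genuine polynomial in $(p^m,m)$, and it is what the paper leans on (and what DuBose--Vallières exploit directly via Ihara $L$-functions, giving the same polynomial, as the remark after the theorem notes). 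A correct completion of your argument must feed this structural input into the analysis of $v_p(G(\zeta-1))$; a coordinate change plus multivariable Weierstraß preparation applied to an arbitrary power series cannot do the job on its own.
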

\begin{rem}
Our method gives the same polynomial as the method using Ihara L-functions from \cite[Theorem~6.1]{dubose-vallieres}.
\end{rem}

\section{An Iwasawa main conjecture} \label{section:main_conjecture} 
Let as before $X$ be a finite connected multigraph. Let $X_\infty$ be a $\Z_p^l$-voltage cover of $X$. For every finite graph we define the zeta-function and the Ihara $L$-functions as in \cite{dubose-vallieres}.
Let $\Delta_\infty$ be defined as in Section~\ref{section:linearalgebra}. Let $\psi$ be a character of $\Z_p^l$ with finite image and let $A_\psi$ be the matrix defined in \cite{dubose-vallieres}. From the definitions in \cite{dubose-vallieres} we see that 
\[P_{\psi}(u):=\det(I-A_\psi u+(D-I)u^2)=\frac{(1-u^2)^{\chi(X)}}{L_X(u,\psi)}.\]
In this section we assume that no vertex has valency $1$. If $\psi$ is not the trivial character then $P_\psi(1)\neq 0$. Using the definition of $A_\psi$, it follows that
\[P_\psi(1)=\psi(\det(\Delta_\infty)).\]
If we interpret the function $\lim_{u\to 1}\frac{(1-u^2)^{\chi(X)}}{L_X(u,\psi)}$ as the algebraic part of the Ihara $L$-function, then $\det(\Delta_\infty)\in \Lambda$ can be seen as a $p$-adic $L$-series interpolating special values of the algebraic part of $L_X(u,\psi)$. 

\begin{lemma}
\label{lemma:projective-limit}
Let $X_m$ be the level $m$ subcover of $X_\infty/X$. Then we have
\[J_\infty:=J(X_\infty)\otimes \Lambda=\varprojlim_{m}J(X_m) \cong \varprojlim_m N/(I_m L) \cong N.\]
\end{lemma}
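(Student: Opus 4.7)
The plan is to split the chain of isomorphisms into three identifications which can be proven independently, all routed through the module ${N = M_\Lambda/\textup{Pr}_\Lambda}$, which has already been shown in the proof of Theorem~\ref{thm:jacobian} to be the ``correct'' $\Lambda$-completion of the classical constructions.

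First I would identify $J(X_\infty) \otimes \Lambda$ with $N$. From the definition ${J(X_\infty) = \textup{Div}^0(X_\infty)/\textup{Pr}(X_\infty)}$ and observations (c) and (d) in the proof of Theorem~\ref{thm:jacobian} (which state that $\textup{Div}^0(X_\infty)$ is $\mathfrak{m}$-adically dense in $M_\Lambda$ and $\textup{Pr}(X_\infty)$ in $\textup{Pr}_\Lambda$, where ${\mathfrak{m} = (p, T_1, \ldots, T_l)}$ is the maximal ideal of $\Lambda$), the completed tensor product with $\Lambda$ yields ${J(X_\infty) \otimes \Lambda = M_\Lambda/\textup{Pr}_\Lambda = N}$. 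Second, Theorem~\ref{thm:jacobian} gives ${J(X_m) \cong M_\Lambda/(I_m\textup{Div}_\Lambda + \textup{Pr}_\Lambda) = N/(I_m L)}$ with compatible transition maps, so taking inverse limits yields ${\varprojlim_m J(X_m) \cong \varprojlim_m N/(I_m L)}$.

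The main step is then to prove $N \cong \varprojlim_m N/(I_m L)$. A direct binomial expansion of ${\omega_m(T_i) = (1+T_i)^{p^m}-1}$ shows that every term lies in $\mathfrak{m}^{m+1}$, hence ${I_m \subseteq \mathfrak{m}^{m+1}}$. Injectivity of the canonical map then follows from Krull's intersection theorem, since ${\bigcap_m I_m L \subseteq \bigcap_m \mathfrak{m}^{m+1} L = 0}$. For surjectivity, I would lift a compatible system to elements $n_m \in N$ with ${n_{m+1} - n_m \in I_m L \subseteq \mathfrak{m}^{m+1} L}$, so that $(n_m)$ is Cauchy in the $\mathfrak{m}$-adic topology of $L$. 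Because $L$ is a finitely generated module over the complete Noetherian local ring $\Lambda$, it is $\mathfrak{m}$-adically complete, so ${n_m \to n}$ for some ${n \in L}$. Since ${L/N \cong \Z_p}$ is Hausdorff in the natural topology (Corollary~\ref{cor:zusammenhangzuN}), the submodule $N$ is closed, forcing ${n \in N}$; and each $I_m L$ is finitely generated, hence closed in $L$, so ${n - n_m \in I_m L}$ as required.

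The principal obstacle is this final surjectivity: one must simultaneously control completeness of the ambient module $L$, closedness of the submodule $N$, and closedness of each $I_m L$, all with respect to the same $\mathfrak{m}$-adic topology. The decisive algebraic input that makes the argument run is the crude but essential inclusion ${I_m \subseteq \mathfrak{m}^{m+1}}$, which is what bridges the filtration $\{I_m L\}$ defining the projective limit and the $\mathfrak{m}$-adic topology in which $L$ is complete.
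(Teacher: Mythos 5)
Your proof is correct and takes a genuinely different route for the key identification $\varprojlim_m N/(I_m L) \cong N$. The paper instead passes to projective limits of the finite-level exact sequences $0 \to \textup{Pr}(X_m) \to \textup{Div}^0(X_m) \to J(X_m) \to 0$, shows $\varprojlim_m \textup{Pr}(X_m) \cong \textup{Pr}_\Lambda$ and $\varprojlim_m \textup{Div}^0(X_m) \cong M_\Lambda$ by tracking generators at each level, and concludes $\varprojlim_m J(X_m) \cong M_\Lambda/\textup{Pr}_\Lambda = N$ (tacitly using that the systems have surjective transition maps so the projective limit stays right exact). Your argument replaces all of this with the inclusion $I_m \subseteq \mathfrak{m}^{m+1}$ together with $\mathfrak{m}$-adic completeness of finitely generated modules over the complete Noetherian local ring $\Lambda$ and closedness of their submodules; this is self-contained commutative algebra and avoids re-examining the finite-level generators, at the small cost of having to verify the Cauchy/closedness details you outline (which do hold). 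The one place where you are substantially terser than the paper, and where you should expand, is the first identification $J(X_\infty) \otimes_R \Lambda \cong N$: the paper obtains it by tensoring the defining short exact sequence with $\Lambda$ and killing the $\textup{Tor}_1^R$ obstruction, which hinges on the observation that $\textup{Div}^0_\Lambda$ has no $(T_1,\dots,T_l)$-torsion, so that $\textup{Div}^0_\Lambda \hookrightarrow \textup{Div}_\Lambda$ with image $M_\Lambda$ and $\textup{Pr}_\Lambda \hookrightarrow M_\Lambda$. Your appeal to density observations (c) and (d) of the proof of Theorem~\ref{thm:jacobian} points at the right phenomenon but does not by itself justify that the ordinary (not completed) tensor product $J(X_\infty) \otimes_R \Lambda$ equals $M_\Lambda/\textup{Pr}_\Lambda$; you should supply the $\textup{Tor}$ vanishing or an equivalent flatness/injectivity argument.
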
 
\begin{proof}
By definition $J(X_\infty)=\textup{Div}^0(X_\infty)/\textup{Pr}(X_\infty)$ as $R$-modules. Consider the exact sequence
\begin{equation}\label{trivial-sequence}0\longrightarrow \textup{Pr}(X_\infty)\longrightarrow \textup{Div}^0(X_\infty)\longrightarrow J(X_\infty)\longrightarrow 0.\end{equation}
Recall that $\textup{Div}^0(X_\infty)+I_m\textup{Div}_\Lambda=M_\Lambda$ for all $m$. It follows that the image of $\textup{Div}^0_\Lambda$ in $\textup{Div}_\Lambda$ is equal to $M_\Lambda$. Note that $\textup{Div}(X_\infty)/\textup{Div}^0(X_\infty)$ is annihilated by $J_0=(T_1,T_2,\dots, T_l)$. But $\textup{Div}^0(X_m)[J_0]=(\prod_{i=1}^l\nu_{m,i}(T_i))\textup{Div}(X_m)\cap \textup{Div}^0(X_m)$.
It follows that $\textup{Div}^0_\Lambda$ does not contain non-trivial $J_0$-torsion. In particular, the image of $\textup{Tor}_1^R(\textup{Div}(X_\infty)/\textup{Div}^0(X_\infty))$ inside $\textup{Div}^0_\Lambda$ is trivial and we obtain a natural injection 
\[\textup{Div}^0_\Lambda\longrightarrow \textup{Div}_\Lambda.\]
Thus, $\textup{Div}^0_\Lambda\cong M_\Lambda$. 

Upon tensoring \eqref{trivial-sequence} with $\Lambda$ we obtain an exact sequence
\[\textup{Pr}_\Lambda\longrightarrow M_\Lambda\longrightarrow J_\infty \longrightarrow 0.\]
Recall that $\textup{Pr}_\Lambda=\mathcal{L}_\alpha(\textup{Div}_\Lambda)\subset M_\Lambda$ by definition. In particular, the natural map $\textup{Pr}_\Lambda\longrightarrow M_\Lambda$ is injective and we obtain an isomorphism
\begin{equation}
\label{trivial-isomorphism}
    M_\Lambda/\textup{Pr}_\Lambda=J_\infty.
\end{equation}

Taking projective limits of the exact sequences
\[0\longrightarrow \textup{Pr}(X_m)\longrightarrow \textup{Div}^0(X_m)\longrightarrow J(X_m)\longrightarrow 0\] gives us a sequence
\[0\longrightarrow \varprojlim_m\textup{Pr}(X_m)\longrightarrow \varprojlim_m\textup{Div}^0(X_m)\longrightarrow \varprojlim_mJ(X_m)\]
By Theorem \ref{thm:jacobian} $\varprojlim_mJ(X_m)\cong N$. Note that $\textup{Pr}(X_m)$ is generated by $p_{i,0}$ and that these generators do not have a relation over $\Lambda$. It follows that $\varprojlim_m\textup{Pr}(X_m)\cong \textup{Pr}_\Lambda$. The module $\textup{Div}^0(X_m)$ is generated by $J_0\textup{Div}^0(X_m)$, $v_{i,0}-v_{j,0}$ for $1\le i<j\le n$ and the elements $p_{i,0}$. It follows that $\varprojlim_m\textup{Div}^0(X_m)$ is generated by the same elements over $\Lambda$ and we obtain that $\varprojlim_m\textup{Div}^0(X_m)\cong M_\Lambda$. Summarising, we obtain an exact sequence
\[0\longrightarrow \textup{Pr}_\Lambda\longrightarrow M_\Lambda\longrightarrow \varprojlim_mJ(X_m)\longrightarrow 0\]
Together with \eqref{trivial-isomorphism} the claim follows.
\end{proof}
Fixing this notation, we can prove an \emph{Iwasawa main conjecture} for graphs: 
\begin{thm} \label{thm:main_conjecture} 
Let $l\ge 2$. Then 
\[\textup{Char}(J_\infty)=(\det(\Delta_\infty)). \] 
\end{thm}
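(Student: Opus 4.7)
The plan is to identify $J_\infty$ with $L = \textup{Div}_\Lambda/\textup{Pr}_\Lambda$ up to a pseudo-isomorphism, and then read off $\textup{Char}(L)$ from the free resolution of $L$ provided by the matrix $\Delta_\infty^t$.

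For the reduction step, Lemma~\ref{lemma:projective-limit} gives ${J_\infty \cong N = M_\Lambda/\textup{Pr}_\Lambda}$, which fits into the short exact sequence
\[ 0 \longrightarrow N \longrightarrow L \longrightarrow \textup{Div}_\Lambda/M_\Lambda \longrightarrow 0. \]
Using the generators of $M_\Lambda$, the quotient ${\textup{Div}_\Lambda/M_\Lambda}$ is isomorphic to ${\bigoplus_i \Z_p v_{i,0}}$ modulo the sum-zero submodule, hence to $\Z_p$ (as already recorded in Corollary~\ref{cor:zusammenhangzuN}). For ${l \ge 2}$, the module $\Z_p$ is annihilated by the coprime elements ${T_1, T_2 \in \Lambda}$, so it is pseudo-null; consequently ${N \hookrightarrow L}$ is a pseudo-isomorphism, and ${\textup{Char}(J_\infty) = \textup{Char}(N) = \textup{Char}(L)}$. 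Thus it remains to compute $\textup{Char}(L)$.

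For the computation step, ${\textup{Div}_\Lambda \cong \Lambda^n}$ is $\Lambda$-free with basis ${v_{1,0},\ldots,v_{n,0}}$, and ${\textup{Pr}_\Lambda = \mathcal{L}_\alpha(\textup{Div}_\Lambda)}$ was shown at the start of Section~\ref{section:J(X_m)} to be $\Lambda$-free of the same rank, so multiplication by $\Delta_\infty^t$ is an injective endomorphism of $\Lambda^n$ (its kernel is torsion in the torsion-free module $\Lambda^n$). This yields the free resolution
\[ 0 \longrightarrow \Lambda^n \xrightarrow{\ \Delta_\infty^t\ } \Lambda^n \longrightarrow L \longrightarrow 0, \]
and the standard fact that for any injective endomorphism of $\Lambda^n$ given by a matrix $A$ one has ${\textup{Char}(\coker(A)) = (\det(A))}$ -- proved by bringing $A$ into Smith normal form over the discrete valuation ring ${\Lambda_\p}$ at each height-one prime ${\p \subset \Lambda}$ and counting valuations of the diagonal entries -- gives ${\textup{Char}(L) = (\det(\Delta_\infty^t)) = (\det(\Delta_\infty))}$. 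Combined with the reduction step this proves the theorem.

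The hypothesis ${l \ge 2}$ enters the argument only once, in the pseudo-nullness of $\Z_p$; for ${l = 1}$ the two characteristic ideals differ by a factor of $T$, consistent with ${\lambda(N) = \lambda(L)-1}$ of Corollary~\ref{cor:zusammenhangzuN}. I do not expect any substantial obstacle here: the two principal inputs -- the identification ${J_\infty \cong N}$ and the $\Lambda$-freeness of $\textup{Pr}_\Lambda$ -- have already been established, and the determinantal characteristic ideal formula is standard commutative algebra. The only mild subtlety is to verify that the short exact sequence above really sits inside $\Lambda$-torsion modules so that taking characteristic ideals is meaningful; but $L$ is $\Lambda$-torsion since ${L/I_m L}$ is finitely generated over $\Z_p$ for every $m$ (as was used implicitly in Section~\ref{section:J(X_m)}), which in turn follows from $|J(X_m)| < \infty$ together with $L/N \cong \Z_p$.
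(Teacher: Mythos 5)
Your proposal is correct and follows essentially the same route as the paper: identify $J_\infty$ with $N=M_\Lambda/\textup{Pr}_\Lambda$ via Lemma~\ref{lemma:projective-limit}, pass from $N$ to $L=\textup{Div}_\Lambda/\textup{Pr}_\Lambda$ by pseudo-nullness of the cokernel $\Z_p$ (which is where $l\ge 2$ enters), and read off $\textup{Char}(L)=(\det\Delta_\infty)$ from the free presentation $\Lambda^n\xrightarrow{\Delta_\infty^t}\Lambda^n\twoheadrightarrow L$. The only cosmetic difference is that the paper phrases the last step through Fitting ideals, whereas you unwind it via Smith normal form over the localizations at height-one primes; these are the same computation.
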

\begin{proof}
Recall that $\Delta^t_\infty$ is the matrix representing the $\Lambda$-linear map 
\[\mathcal{L}_\alpha \colon \textup{Div}_\Lambda \longrightarrow \textup{Div}_\Lambda, \]
such that $\mathcal{L}_\alpha(\textup{Div}_\Lambda)=\textup{Pr}_\Lambda$. Moreover,  $\textup{Div}(X_\infty)/\textup{Pr}(X_\infty)$ is a finitely generated torsion $R=\Z_p[T_1,\ldots, T_l]$-module and  ${\textup{Pr}_\Lambda = \textup{Pr}(X_\infty) \otimes \Lambda}$ is a free $\Lambda$-module. Consider the natural exact sequence
\[\textup{Pr}_\Lambda\longrightarrow \textup{Div}_\Lambda\longrightarrow (\textup{Div}(X_\infty)/\textup{Pr}(X_\infty))\otimes \Lambda\longrightarrow 0.\]

We would like to see that this sequence is still left exact. Note that 
\[ \textup{Tor}_1^R(\textup{Div}(X_\infty)/\textup{Pr}(X_\infty),\Lambda)\] 
is still $\Lambda$-torsion. On the other hand, $\textup{Pr}_\Lambda$ is $\Lambda$-free. Thus, the image of \[\textup{Tor}_1^R(\textup{Div}(X_\infty)/\textup{Pr}(X_\infty),\Lambda)\] 
in $\textup{Pr}_\Lambda$ is trivial and the above sequence is still left exact.

As $\textup{Div}_\Lambda$ and $\textup{Pr}_\Lambda$ are free of the same $\Lambda$-rank we obtain
\[(\det(\Delta_\infty))=\textup{Fitt}^0_\Lambda((\textup{Div}(X_\infty)/\textup{Pr}(X_\infty))\otimes \Lambda)=\textup{Char}((\textup{Div}(X_\infty)/\textup{Pr}(X_\infty))\otimes \Lambda).\]

Analogously to the first exact sequence we can show that
\[0\longrightarrow \textup{Pr}_\Lambda\longrightarrow \textup{Div}^0_\Lambda\longrightarrow J_\infty\longrightarrow 0\]
is exact.

There is a natural injection
\[\psi\colon J_\infty\longrightarrow (\textup{Div}_\Lambda/\textup{Pr}_\Lambda)\otimes \Lambda,\]
whose cokernel has $\Z_p$-rank one. In particular
\[\textup{Char}((\textup{Div}(X_\infty)/\textup{Pr}(X_\infty))\otimes \Lambda)=\textup{Char}(J_\infty), \] 
since ${l \ge 2}$ and therefore each finitely generated $\Z_p$-module is pseudo-null over $\Lambda$. 
\end{proof}
\begin{rem} \label{rem:main_conjecture} 
Note that this proof crucially depends on the fact that $l\ge 2$. In the case $l=1$, we see that ${\textup{Char}(\coker \psi)=(T)}$. Thus, in this case
\[(T) \cdot \textup{Char}(J_\infty)= (\det(\Delta_\infty)).\]
\end{rem}

\section{Fukuda's theorem and local boundedness results} 
\label{sec:Fukuda}
We start from the $\Lambda$-module isomorphisms 
\[ J(X_m) \cong N/L_m \] 
from Theorem~\ref{thm:jacobian}. Let $\mathfrak{m} = (p, T_1, \ldots, T_l)$ be the maximal ideal of the Iwasawa algebra $\Lambda$. For any ideal ${I \subseteq \mathfrak{m}}$ of $\Lambda$ and any finitely generated torsion $\Lambda$-module $A$, we define 
\[ \rg_I(A) = v_p(|A/(I \cdot A)|), \] 
whenever this is finite. 

The following key result is proven as \cite[Theorem~2.5]{local_beh}. 
\begin{thm} \label{thm:fukuda} 
  Suppose that there exists an ideal $I \subseteq \Lambda$ such that 
  \[ \rg_I(J(X_m)) = \rg_I(J(X_{m+1})) \] 
  for some ${m \in \N}$. Then 
  \[ \rg_I(J(X_k)) = \rg_I(J(X_m)) \] 
  for every ${k \ge m}$, and in fact $\rg_I(N)$ is finite and equal to $\rg_I(J(X_m))$. 
\end{thm}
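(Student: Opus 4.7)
My plan is to reduce the statement to a claim about the finitely generated torsion $\Lambda$-module $N$ from Section \ref{section:J(X_m)}, where $J(X_m) \cong N/L_m$ by Theorem \ref{thm:jacobian} and $L_m = (\omega_m(T_1),\ldots,\omega_m(T_l))L$ is regarded as a submodule of $N$. The assumption $\rg_I(J(X_m)) = \rg_I(J(X_{m+1}))$ is equivalent to saying that the natural surjection
\[ N/(IN+L_{m+1})\twoheadrightarrow N/(IN+L_m) \]
between finite abelian $p$-groups of equal order is an isomorphism, which translates into the single submodule inclusion $L_m \subseteq IN+L_{m+1}$.

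From here the whole theorem boils down to showing the stronger inclusion $L_m \subseteq IN$. Granting that, the quotient $N/IN$ equals $N/(IN+L_m)$, so $\rg_I(N)$ is finite and equal to $\rg_I(J(X_m))$; and for every $k \ge m$ the two surjections
\[ N/IN \twoheadrightarrow N/(IN+L_k) \twoheadrightarrow N/(IN+L_m) \]
between finite groups of the same order must both be isomorphisms, yielding the stated stability. To produce $L_m \subseteq IN$, I would prove by induction on $k \ge m$ that $L_m \subseteq IN + L_k$, and then intersect over $k$: since each $\omega_k(T_i)$ lies in $\mathfrak{m}^{p^k}$, the finitely generated $\Lambda$-module $L$ satisfies $\bigcap_k L_k = 0$ by Krull's intersection theorem, and because $IN$ is a closed submodule of the finitely generated $\Lambda$-module $N$ one has $\bigcap_k (IN+L_k) = IN$.

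The main obstacle is the inductive step: deducing $L_{k+1}\subseteq IN+L_{k+2}$ from the one-step stability at level $m$ alone, so that the induction can proceed past level $m+1$. The naive approach via the factorisation $\omega_{k+1}(T_i) = \omega_m(T_i)\nu_{m,k+1}(T_i)$ lets one rewrite a generator $\omega_{k+1}(T_i)x \in L_{k+1}$ as $\omega_m(T_i)(\nu_{m,k+1}(T_i)x)$, which by hypothesis lies in $IN+L_{m+1}$; but this only yields $L_{k+1}\subseteq IN+L_{m+1}$, not the much tighter $L_{k+1}\subseteq IN+L_{k+2}$ one needs. In the one-variable case $l=1$ the difficulty can be bypassed because $L_k = \omega_k(T)L$ is principal and multiplication by $\nu_{k,k+1}(T)$ directly produces the next step. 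For $l\ge 2$ the direct factorisation breaks down because $L_k$ mixes the contributions of $l$ different variables, and the cleaner route is to pass to $N/IN$, study the descending chain of images of the $L_k$ there, and use a Noetherianity/Artin--Rees-type argument combined with $\mathfrak{m}$-adic completeness to show that stability at one step forces stability throughout. This is exactly the content of the abstract Fukuda-type result \cite[Theorem~2.5]{local_beh}, which applies directly to the finitely generated $\Lambda$-module $N$ equipped with the descending sequence $\{L_k\}$, and from which our Theorem~\ref{thm:fukuda} then follows at once.
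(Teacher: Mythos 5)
You correctly reduce the theorem to the single inclusion $L_m \subseteq IN$, and your plan to extract it — intersect the inclusions $L_m \subseteq IN + L_k$ over $k$ — is a sound one. However, you stall at precisely the point that carries the weight of the argument, and you then defer to the cited \cite[Theorem~2.5]{local_beh} rather than supplying the step. Since that reference \emph{is} the result you are asked to establish, the proposal as written does not constitute a proof.

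The gap is a missed observation, not a genuine difficulty. You assert that for $l \ge 2$ "the direct factorisation breaks down because $L_k$ mixes the contributions of $l$ different variables." In fact the factorisation works perfectly well component by component: for each $i$ one has $\omega_{m+1}(T_i) = \omega_m(T_i)\,\nu_{m,m+1}(T_i)$ with $\nu_{m,m+1}(T_i) = \sum_{j=0}^{p-1}(T_i+1)^{jp^m} \equiv p \pmod{(T_1,\ldots,T_l)}$, so $\nu_{m,m+1}(T_i) \in \mathfrak{m}$. Summing over $i$ gives
\[
L_{m+1} = \sum_{i=1}^l \omega_{m+1}(T_i)\,L = \sum_{i=1}^l \nu_{m,m+1}(T_i)\,\omega_m(T_i)\,L \subseteq \mathfrak{m}\,L_m,
\]
in any number of variables. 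Once this is in hand, the paper's argument is immediate and far shorter than your intersection scheme: your equality $IN + L_m = IN + L_{m+1}$ yields, upon passing to $Z := (IN + L_m)/IN$ inside $N/IN$, the containment $Z = (IN + L_{m+1})/IN \subseteq \mathfrak{m}\,Z$; since $Z$ is a compact (finitely generated) $\Lambda$-module, Nakayama's Lemma forces $Z = 0$, i.e.\ $L_m \subseteq IN$. There is no need for an induction over $k$, for Krull's intersection theorem, or for any Artin--Rees input. Your intersection argument could be completed along the same lines (iterating $L_m \subseteq IN + \mathfrak{m}^j L_m$ and intersecting over $j$), but in any form the crucial ingredient is the containment $L_{m+1} \subseteq \mathfrak{m} L_m$, which your write-up does not identify.
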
 
\begin{proof} 
  Since $N/(I \cdot N + L_m)$ and $N/(I \cdot N + L_{m+1})$ have the same cardinality, we may conclude that 
  \[ I \cdot N + L_m = I \cdot N + L_{m+1}. \] 
  Letting $Z := (I \cdot N + L_m)/(I \cdot N)$ and recalling the definition of $L_m$, it follows that 
  \[ Z \subseteq \mathfrak{m} \cdot Z. \] 
  Since $Z$ is a compact $\Lambda$-module, Nakayama's Lemma implies that ${Z = 0}$, i.e. $L_m$ is contained in $I \cdot N$. The assertion of the theorem follows immediately. 
\end{proof} 
\begin{lemma} \label{lemma:no-pseudo-null} 
Let $N=M_\Lambda/\textup{Pr}_\Lambda$. Then $N \subseteq L=\textup{Div}_\Lambda/\textup{Pr}_\Lambda$. The two $\Lambda$-modules $N$ and $L$ do not contain any non-zero pseudo-null submodules. 
\end{lemma}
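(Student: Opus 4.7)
The plan is to first establish the inclusion $N\subseteq L$ by unwinding definitions, and then to deduce the absence of non-zero pseudo-null submodules in $L$ (which automatically transfers to the submodule $N$) by combining the free resolution of $L$ furnished by Section~\ref{section:linearalgebra} with a short depth and associated-prime argument over the regular local ring $\Lambda$.

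First I would verify $N\subseteq L$. By construction each generator $p_{i,0}$ of $\textup{Pr}_\Lambda$ belongs to $M_\Lambda$, so $\textup{Pr}_\Lambda\subseteq M_\Lambda\subseteq \textup{Div}_\Lambda$. The canonical map $N=M_\Lambda/\textup{Pr}_\Lambda\to L=\textup{Div}_\Lambda/\textup{Pr}_\Lambda$ induced by the inclusion $M_\Lambda\hookrightarrow\textup{Div}_\Lambda$ has kernel $(M_\Lambda\cap\textup{Pr}_\Lambda)/\textup{Pr}_\Lambda=0$, which gives the desired embedding. This reduces the remainder of the proof to showing that $L$ has no non-zero pseudo-null submodule, since any such submodule of $N$ would remain pseudo-null inside $L$.

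For the main step the plan is to invoke the free presentation
\[ 0\longrightarrow \textup{Pr}_\Lambda\stackrel{\Delta_\infty^t}{\longrightarrow}\textup{Div}_\Lambda\longrightarrow L\longrightarrow 0 \]
constructed in Section~\ref{section:linearalgebra}. Both outer terms are $\Lambda$-free of rank $n$, and the left map is injective because $\det(\Delta_\infty)\neq 0$, a fact already established in Section~\ref{section:linearalgebra} by counting $\Z_p$-ranks of $\textup{Pr}(X_m)$. This exhibits $L$ as a finitely generated $\Lambda$-module of projective dimension exactly one, since $L$ is torsion and non-zero and hence not free. Because $\Lambda$ is regular local of Krull dimension $l+1$, the Auslander--Buchsbaum formula gives $\textup{depth}_\Lambda(L)=\dim(\Lambda)-\textup{pd}_\Lambda(L)=l$.

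The remaining argument is pure commutative algebra. The standard depth inequality $\textup{depth}_\Lambda(L)\le\dim(\Lambda/\mathfrak{p})$ valid for every $\mathfrak{p}\in\textup{Ass}_\Lambda(L)$ (elements of $\mathfrak{p}$ are zero-divisors on $L$) forces every associated prime of $L$ to have height at most one, and because $L$ is $\Lambda$-torsion none of them equals $(0)$, so every associated prime of $L$ has height exactly one. A hypothetical non-zero pseudo-null submodule $Z\subseteq L$ would admit some $\mathfrak{q}\in\textup{Ass}_\Lambda(Z)\subseteq\textup{Ass}_\Lambda(L)$ containing $\Ann_\Lambda(Z)$, which in turn contains two coprime elements by pseudo-nullity and therefore has height $\ge 2$; this contradicts $\textup{height}(\mathfrak{q})=1$. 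I do not expect a real obstacle in carrying out this plan: the only non-routine ingredient is the projective-dimension computation, which is immediate from the given square presentation with non-vanishing determinant.
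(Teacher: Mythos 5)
Your proof is correct, but it takes a genuinely different route from the paper's. Both arguments ultimately exploit the same structural input — that $L$ sits in a short exact sequence $0 \to \textup{Pr}_\Lambda \to \textup{Div}_\Lambda \to L \to 0$ with both outer terms $\Lambda$-free of rank $n$ — but they draw different conclusions from it. The paper argues directly and elementarily inside the UFD $\Lambda$: taking $x \in \textup{Div}_\Lambda$ with $ax, cx \in \textup{Pr}_\Lambda$ for coprime $a, c$, it writes the coordinates of $ax$ and $cx$ in a fixed $\Lambda$-basis of $\textup{Pr}_\Lambda$, uses uniqueness of coordinates together with $ca_i = ac_i$ and $(a,c) = 1$ to deduce $a \mid a_i$ for all $i$, and then cancels $a$ to place $x$ itself in $\textup{Pr}_\Lambda$. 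Your argument instead reads off that $\textup{pd}_\Lambda(L) = 1$, applies Auslander--Buchsbaum to get $\textup{depth}_\Lambda(L) = l$, and then uses the standard inequality $\textup{depth}(L) \le \dim(\Lambda/\mathfrak{p})$ for $\mathfrak{p} \in \textup{Ass}(L)$ to bound all associated primes in height one, which is incompatible with a non-zero pseudo-null submodule. The paper's proof buys self-containedness and elementarity (no homological machinery beyond what the structure theory already requires); yours buys a cleaner conceptual statement — any torsion $\Lambda$-module of projective dimension at most one over the regular local ring $\Lambda$ has no non-trivial pseudo-null submodule — which is a well-known Iwasawa-theoretic principle and makes the hypothesis being used entirely transparent. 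One small notational quibble: the left-hand map in your presentation is the inclusion $\textup{Pr}_\Lambda \hookrightarrow \textup{Div}_\Lambda$ rather than literally multiplication by $\Delta_\infty^t$ (the latter is a map $\textup{Div}_\Lambda \to \textup{Div}_\Lambda$ with image $\textup{Pr}_\Lambda$), but this does not affect the argument since $\textup{Pr}_\Lambda$ is free of rank $n$ in either description.
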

\begin{proof}
Note that $\textup{Div}_\Lambda$ and $\textup{Pr}_\Lambda$ are both free $\Lambda$-modules of rank $n$. Suppose that $x\in \textup{Div}_\Lambda$ is an element such that $x+\textup{Pr}_\Lambda$ generates a pseudo-null submodule in $L$. Choose two coprime elements $a$ and $c$ such that $ax,cx\in \textup{Pr}_\Lambda$. Fix a basis $b_1, \ldots,  b_n$ of $\textup{Pr}_\Lambda$ and choose coefficients
\[a x=\sum_{i=1}^n a_i b_i, \quad cx=\sum_{i=1}^n c_ib_i.\]
Note that the coefficients $a_i$ and $c_i$ are unique. It follows that
\[ca_i=ac_i \textup{ for } 1\le i\le n.\]
As $(c,a)=1$, it follows that $a\mid a_i$ for $1\le i\le n$.

 Therefore we can write 
\[ a x = \sum_{i = 1}^n a \cdot \frac{a_i}{a} \cdot b_i = a \cdot \sum_{i = 1}^n \frac{a_i}{a} \cdot b_i, \] 
and thus 
\[ a \cdot \left(x - \sum_{i = 1}^n \frac{a_i}{a} \cdot b_i \right) = 0. \] 
Since $\textup{Div}_\Lambda$ is a free $\Lambda$-module, this is possible only if already 
\[ x = \sum_{i = 1}^n \frac{a_i}{a} \cdot b_i \] 
is contained in $\textup{Pr}_\Lambda$. Therefore $L$ (and $N$) do not contain any non-trivial pseudo-null submodule. 
\end{proof}
The property proved above has many important consequences. For example, since $L$ does not contain any non-trivial pseudo-null submodule, we obtain, starting from a pseudo-isomorphism, an \emph{injection} ${\psi \colon L \longrightarrow E}$ into some elementary $\Lambda$-module $E$ such that the cokernel of $\psi$ is pseudo-null.

Now we introduce a topology on the set of $\Z_p^l$-covers of our fixed finite multigraph $X$. The voltage graphs $X_\infty$ and $\tilde{X}_\infty$ assigned to two voltage covers 
\[ \alpha, \tilde{\alpha} \colon S \longrightarrow \Z_p^l \] 
will be considered as \lq close' if the induced maps 
\[ \alpha_m, \tilde{\alpha}_m \colon S \longrightarrow (\Z/p^m \Z)^l \] 
coincide for a large integer $m$. More formally, let $\mathcal{E}^l(X)$ denote the set of voltage $\Z_p^l$-covers of $X$. For any voltage cover ${X_\infty \in \mathcal{E}^l(X)}$ and each ${m \in \N}$, we denote by $\mathcal{E}(X_\infty,m)$ the set of voltage covers $\tilde{X}_\infty$ which satisfy the above condition (i.e. ${\alpha_m = \tilde{\alpha}_m}$). It follows from our definition of the voltage cover graphs $X_m$ (see Definition~\ref{def:voltagelevel}) that 
\[ \tilde{X}_n = X_n \] 
for each ${n \le m}$ if ${\tilde{X}_\infty \in \mathcal{E}(K_\infty, m)}$. 

The sets $\mathcal{E}(X_\infty,m)$, with ${X_\infty \in \mathcal{E}^l(X)}$ and ${m \in \N}$, form the basis of a topology on $\mathcal{E}^l(X)$. This topology is motivated by what we call \emph{Greenberg's topology} on the set $\mathcal{E}^l(K)$ of $\Z_p^l$-extensions of a number field $K$ (the latter topology has been introduced in this classical setting in \cite{green_73}).
\begin{lemma} \label{lemma:top_kompakt} 
  The set $\mathcal{E}^l(X)$ is compact with respect to Greenberg's topology. 
\end{lemma}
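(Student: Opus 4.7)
The plan is to identify $\mathcal{E}^l(X)$ with a finite product of copies of $\Z_p^l$ and to invoke compactness of the factors. Since the base graph $X$ is finite, the section $S$ of $\mathbf{E}_X \to E_X$ is a finite set, and a voltage $\Z_p^l$-cover is by definition determined by a single voltage assignment $\alpha \colon S \to \Z_p^l$. Thus there is a canonical bijection
\[ \mathcal{E}^l(X) \longleftrightarrow (\Z_p^l)^S, \qquad X_\infty \longleftrightarrow \alpha. \]

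Next I would check that, under this bijection, Greenberg's topology on $\mathcal{E}^l(X)$ corresponds exactly to the product topology on $(\Z_p^l)^S$, where each factor is given its $p$-adic topology. Unwinding Definition~\ref{def:voltagelevel}, one sees that the condition ${\tilde{\alpha}_m = \alpha_m}$ defining a basic neighbourhood $\mathcal{E}(X_\infty,m)$ is equivalent to the family of edgewise congruences ${\tilde{\alpha}(e) \equiv \alpha(e) \pmod{p^m}}$ for every ${e \in S}$, which is precisely a basic open set in the product topology on $(\Z_p^l)^S$. So the two topologies are generated by the same basis.

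With this identification in place, compactness is immediate: $\Z_p$ is compact, so $\Z_p^l$ is compact, and a finite product of compact spaces is compact, so $(\Z_p^l)^S$ is compact.

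The only subtlety arises if one interprets the paper's standing hypothesis (that every $X_m$ is connected) as part of the definition of $\mathcal{E}^l(X)$. In that case I would argue that the connectedness locus is in fact clopen in $(\Z_p^l)^S$: by Lemma~\ref{lemma:connected}, all $X_m$ are connected if and only if there exist cycles $C_1,\dots,C_l$ in $X$ whose voltages $\beta_{C_1},\dots,\beta_{C_l}$ topologically generate $\Z_p^l$, which in turn is equivalent to the reductions modulo $p$ of those voltages forming an $\mathbb{F}_p$-basis of $(\Z/p\Z)^l$. This condition depends only on ${\alpha \bmod p}$, so the subset it cuts out is a finite union of basic clopen sets in $(\Z_p^l)^S$, and hence compact. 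I do not anticipate any real obstacle in the argument; the statement is essentially a direct unpacking of the definition of Greenberg's topology combined with the compactness of $\Z_p^l$.
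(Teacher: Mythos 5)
Your proof is correct and takes essentially the same approach as the paper: the paper writes $\mathcal{E}^l(X) = \varprojlim_m \mathcal{E}^{l,m}(S)$ as an inverse limit of finite discrete sets, which is just the profinite formulation of your identification $\mathcal{E}^l(X) \cong (\Z_p^l)^S$ together with the observation that the two topologies agree. Your closing remark that the connectedness locus is clopen (because, via Lemma~\ref{lemma:connected} and the fact that a tuple topologically generates $\Z_p^l$ iff its reduction generates $(\Z/p\Z)^l$, it depends only on $\alpha \bmod p$) is a valid point that the paper's own proof does not bother to spell out.
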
 
\begin{proof} 
  Fix a finite graph $X$ and a voltage cover 
  \[ \alpha \colon S \longrightarrow \Z_p^l. \] 
  For every ${m \in \N}$, the set $\mathcal{E}^{l,m}(S)$ of maps 
  \[ \alpha_m \colon S \longrightarrow (\Z/p^m\Z)^l \] 
  is finite. Therefore $\mathcal{E}^{l,m}(S)$ is compact (with regard to the discrete topology) for each $m$. Since ${\mathcal{E}^l(X) = \varprojlim_m \mathcal{E}^{l,m}(S)}$, it follows that $\mathcal{E}^l(X)$ is also compact. 
\end{proof} 

In the following, we want to compare the (generalised) Iwasawa invariants of voltage covers ${X_\infty, \tilde{X}_\infty \in \mathcal{E}^l(X)}$ which are close with respect to Greenberg's topology. We start with the case ${l \ge 2}$. 
\begin{thm} \label{thm:local_max1} 
   Fix an element ${X_\infty \in \mathcal{E}^l(X)}$, ${l \ge 2}$. Then there exist integers ${i, k \in \N}$ such that with ${U := \mathcal{E}(X_\infty,i)}$ the following two statements hold for each ${\tilde{X}_\infty}$ in $U$. \begin{compactenum}[(a)] 
     \item $m_0(\tilde{X}_\infty) \le m_0(X_\infty)$, and 
     \item $l_0(\tilde{X}_\infty) \le k$ holds if ${m_0(\tilde{X}_\infty) = m_0(X_\infty)}$. 
   \end{compactenum} 
\end{thm}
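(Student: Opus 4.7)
Proof plan:

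The plan is to reduce the comparison of generalised Iwasawa invariants of $X_\infty$ and $\tilde{X}_\infty$ to rank computations on the finite groups $J(X_m) \cong N/L_m$ at some bounded level $i$, using Fukuda's theorem (Theorem~\ref{thm:fukuda}) and the observation that for $\tilde{X}_\infty \in \mathcal{E}(X_\infty, i)$ the intermediate graphs satisfy $\tilde{X}_m = X_m$ for all $m \le i$, so that $J(\tilde{X}_m) \cong J(X_m)$ up to level $i$. The starting point is Lemma~\ref{lemma:no-pseudo-null}: $N = N(X_\infty)$ contains no non-zero pseudo-null submodule, hence $N$ admits an injection into an elementary $\Lambda$-module with pseudo-null cokernel, so the generalised Iwasawa invariants $m_0(X_\infty)$ and $l_0(X_\infty)$ coincide with those of this elementary approximation. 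The analogous properties hold for $\tilde{N} = N(\tilde{X}_\infty)$ for every $\tilde{X}_\infty \in \mathcal{E}^l(X)$, which is what makes a uniform rank-stabilisation argument feasible.

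For part (a), following the template developed in \cite{local_beh, local_max}, I would select an ideal $I_1 \subseteq \Lambda$ — typically built from $p$ together with elements $\omega_r(T_j)$ for a suitable $r$ — whose normalised rank $\rg_{I_1}(\cdot)$ bounds $m_0$ from above for any finitely generated torsion $\Lambda$-module without pseudo-null submodules, and which is computed tightly by $m_0(X_\infty)$ on $N$. Apply Theorem~\ref{thm:fukuda} to $N$ with $I_1$: there exists $i_1 \in \N$ such that $\rg_{I_1}(J(X_m))$ has stabilised for every $m \ge i_1$ at the common value $\rg_{I_1}(N)$. For every $\tilde{X}_\infty \in \mathcal{E}(X_\infty, i_1)$ one then has $\rg_{I_1}(J(\tilde{X}_{i_1})) = \rg_{I_1}(J(X_{i_1}))$, and combining this transport of finite-level data with the rank-theoretic upper bound on the asymptotic invariant gives $m_0(\tilde{X}_\infty) \le m_0(X_\infty)$.

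For part (b), one refines the argument under the additional hypothesis $m_0(\tilde{X}_\infty) = m_0(X_\infty)$: the leading-order Cuoco-Monsky term in \eqref{eq:star} then matches, so the remaining difference in $l_0$ is controlled by the $p$-coprime part of the elementary approximation, equivalently by the reduction of the characteristic power series modulo $p$. Select a second ideal $I_2$ — typically involving higher-order $\omega_r(T_j)$ terms — whose normalised rank $\rg_{I_2}$ bounds $l_0$. Apply Fukuda's theorem again to $N$ and $I_2$ to stabilise $\rg_{I_2}(J(X_m))$ at some level $i_2 \ge i_1$, and set $i := i_2$ and $k := \rg_{I_2}(N)$. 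The same transport-of-finite-level-data argument then yields $l_0(\tilde{X}_\infty) \le k$ for every $\tilde{X}_\infty \in U = \mathcal{E}(X_\infty, i)$ with $m_0(\tilde{X}_\infty) = m_0(X_\infty)$.

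The principal obstacle is the construction of the ideals $I_1$ and $I_2$ and the translation from finite-level rank data into bounds on the asymptotic invariants. In dimension one this is the classical content of Fukuda's argument for $\mu$ and $\lambda$; in dimension $l \ge 2$ it requires the Cuoco-Monsky structure theory together with the no-pseudo-null-submodule property of $N$ to ensure that rank stabilisation controls the honest asymptotic invariant rather than a pseudo-null fudge term. A secondary subtlety is that for $l \ge 2$ only an upper bound $l_0(\tilde{X}_\infty) \le k$ is obtainable and not equality, reflecting a genuine loss of rigidity in the multivariable setting, in contrast with the one-variable situation addressed by Theorem~\ref{thm:local_max2}.
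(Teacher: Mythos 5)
Your high-level strategy — stabilise $\rg_I(J(X_m))$ via Theorem~\ref{thm:fukuda}, then transport the stabilised finite-level value to every $\tilde{X}_\infty$ agreeing with $X_\infty$ to depth $i$, and finally compare against the Cuoco--Monsky asymptotics of the elementary approximation — is the same skeleton as the paper's argument, and you correctly identify Fukuda, Cuoco--Monsky, and the pseudo-isomorphism to an elementary module as the key tools. But the proposal has a concrete gap precisely at the point you flag as ``the principal obstacle'': the construction of the ideal $I$.

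Your suggestion of an ideal ``built from $p$ together with elements $\omega_r(T_j)$'' cannot work as stated. If $p \in I$, then $\rg_I(A) = v_p(|A/IA|)$ is insensitive to the $p$-power content of $A$: for $A = \Lambda/(p^{m_0})$ one gets $A/(p,\omega_r(T_1),\dots,\omega_r(T_l)) = \Lambda/(p,\omega_r(T_1),\dots,\omega_r(T_l))$ independently of $m_0 \ge 1$, so this rank can never detect $m_0$. The paper's ideal is instead of the form $(T_1,\dots,T_{l-2},\, T_{l-1}-p^x,\, \nu_{2n,n}(T_l))$: it does \emph{not} contain $p$, it collapses the first $l-2$ variables, substitutes $T_{l-1} \mapsto p^x$, and only then applies a one-variable $\nu$-polynomial, reducing everything to a one-dimensional Fukuda-type computation. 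Crucially this only gives the clean count $\rg_n(E_N) = m_0\cdot(p^{2n}-p^n) + n\cdot k$ and the uniform bound $\rg_n(B) \le C$ on the pseudo-null cokernel $B = N/\varphi(E_N)$ \emph{after} a change of topological generators of $\Gal(X_\infty/X)$ that puts the characteristic power series into Weierstraß normal form with respect to $T_l$ (via \cite{babaichev} and \cite[Lemma~4.4]{local_max}). That change-of-generators step, and the fact that the bound for part (b) on $l_0$ comes from the degree $k$ of the Weierstraß polynomial (not from a second ideal $I_2$), are missing from your plan. A smaller inaccuracy: you invoke Lemma~\ref{lemma:no-pseudo-null} as the starting point, but in the paper's proof of this theorem the pseudo-isomorphism runs $\varphi\colon E_N \to N$ (injective automatically, since $E_N$ has no nonzero pseudo-null submodules), so the no-pseudo-null property of $N$ itself is not used here; it only becomes essential in the one-dimensional Theorem~\ref{thm:local_max2}, where the direction $N \hookrightarrow E_N$ is needed.
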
 
\begin{proof} 
  Recall that we want to study the generalised Iwasawa invariants of the $\Lambda$-module ${N = M_\Lambda/\textup{Pr}_\Lambda}$ (see also Corollary~\ref{cor:zusammenhangzuN}). Let ${f_N \in \Lambda}$ be the characteristic power series of ${N}$. Recall that ${\Lambda = \Z_p[[T_1, \ldots, T_l]]}$. We say that $f_N$ is \emph{in Weierstraß normal form with respect to $T_l$} if $f_N$ is associated with a product ${p^{m_0(N)} \cdot g_N}$ for some element $g_N$ of the form 
  \begin{align} \label{eq:weierstrass} g_N = T_l^k + h_{k-1} T_l^{k-1} + \ldots + h_1 \cdot T_l + h_0, \end{align} 
  where ${k \ge 1}$ and ${h_0, \ldots, h_{k-1} \in (p, T_1, \ldots, T_{l-1}) \subseteq \Z_p[[T_1, \ldots, T_{l-1}]]}$. It follows from \cite[Lemma~1]{babaichev} that given any set ${\gamma_1, \ldots, \gamma_l}$ of topological generators of 
  \[ \Gal(X_\infty/K) \cong \Z_p^l, \] 
  we can change this set of generators to a set of generators ${\gamma_1', \ldots, \gamma_{l-1}', \gamma_l}$ (i.e. without changing the last element) such that with respect to these new generators, $f_N$ is in Weierstraß normal form with respect to the last variable $T_l$. 
  
  Now let $E_N$ be an elementary $\Lambda$-module attached to $N$, and fix a pseudo-isomorphism ${\varphi: E_N \longrightarrow N}$. Then the cokernel ${B := N/\varphi(E_N)}$ is a pseudo-null $\Lambda$-module, and it follows from \cite[Lemma~4.4]{local_max} that the topological generators ${\gamma_1, \ldots, \gamma_l}$ of ${\Gal(X_\infty/X) \cong \Z_p^l}$ can be chosen such that \begin{compactitem} 
    \item $\gamma_l = T_l - 1$ remains unchanged, $f_N$ is still in Weierstraß normal form with respect to $T_l$, and the degree $k$ in the representation~\eqref{eq:weierstrass} of $g_N$ does not change, and 
    \item $B/(T_1, \ldots, T_{l-2}, T_{l-1} - p^x)$ is finite for all but finitely many ${x \in \N}$. 
  \end{compactitem} 
  
  Fix such an integer $x$. It follows from \cite[Lemma~4.5]{local_max} that 
  \[ \rg_n(N) := v_p(|N/(T_1, \ldots, T_{l-2}, T_{l-1} - p^x, \nu_{2n,n}(T_l)|) \] 
  is finite for each sufficiently large ${n \in \N}$. Moreover, it follows from the short exact sequence 
  \[ \xymatrix{0 \ar[r] & E_N \ar[r]^\varphi & N \ar[r] & B \ar[r] & 0} \] 
  that 
  \[ \rg_n(N) \le \rg_n(E_N) + \rg_n(B) \] 
  (in particular, all the terms in this inequality are finite; cf. also the proof of \cite[Theorem~3.2]{local_max}). Note that 
  \[ \rg_n(B) = v_p(|B/(T_1, \ldots, T_{l-2}, T_{l-1} - p^x)|) =: C \] 
  for each sufficiently large ${n \in \N}$ (it will suffice if $n > C$), i.e. $\rg_n(B)$ is bounded in $n$. 
  
 From now on we will consider $n$ large enough to ensure that \begin{compactenum}[(a)] 
   \item $\rg_n(N)$ is finite, 
   \item $n > C$, and 
   \item ${p^{n}(p-1) > n \cdot k + C}$, where $k$ is as in~\eqref{eq:weierstrass}. 
 \end{compactenum} 
 Then it follows from the proof of \cite[Theorem~4.6]{local_max} (cf. in particular equation~(4.4)) that 
 \[ \rg_n(E_N) = m_0(X_\infty) \cdot (p^{2n} - p^n) + n \cdot k. \] 
 Since ${J(X_m) \cong N/L_m}$ and therefore ${\rg_n(J(X_m)) \le \rg_n(N)}$ for each ${m \in \N}$, it follows that we can choose $m$ sufficiently large such that 
 \[ \rg_n(J(X_m)) = \rg_n(J(X_k)) = \rg_n(N) \] 
 for each ${k \ge m}$ (cf. also the proof of Theorem~\ref{thm:fukuda}). Fix such an integer $m$, and consider ${U := \mathcal{E}^l(X_\infty, m + 1)}$. Note that the integer $m$ depends on $n$; this is not a problem since $n$ has been chosen and fixed above. 
 
 Let ${\tilde{X}_\infty \in U}$ be arbitrary. We denote the corresponding $\Lambda$-module $N(\tilde{X}_\infty)$ by $\tilde{N}$. 
 Since ${\tilde{X}_\infty \in U}$, we have 
 \[ \rg_n(J(\tilde{X}_m)) = \rg_n(J(\tilde{X}_{m+1})) = \rg_n(N). \] 
 Therefore Theorem~\ref{thm:fukuda} implies that 
 \[ \rg_n(\tilde{N}) = \rg_n(N). \] 
 Moreover, it follows from \cite[Theorem~3.2]{local_max} that 
 \[ \rg_n(E_{\tilde{N}}) \le \rg_n(\tilde{N}), \] 
 where we denote by $E_{\tilde{N}}$ an elementary $\Lambda$-module attached to $\tilde{N}$. Summarising, we have shown that 
 \[ \rg_n(E_{\tilde{N}}) \le m_0(X_\infty) \cdot (p^{2n} - p^n) + n \cdot k + C, \] 
 Since ${\rg_n(E_{\tilde{N}}) \ge m_0(\tilde{N}) \cdot (p^{2n} - p^n)}$, it follows from our choice of $n$ (in particular, cf. property (c)) that 
 \[ m_0(\tilde{X}_\infty) = m_0(\tilde{N}) \le m_0(X_\infty). \] 
 As ${\tilde{X}_\infty \in U}$ had been chosen arbitrarily, this proves assertion (a) of the theorem. 
 
 Now suppose that ${m_0(\tilde{X}_\infty) = m_0(X_\infty)}$ for some fixed ${\tilde{X}_\infty \in U}$, and write the characteristic power series of $\tilde{N}$ as 
 \[ f_{\tilde{N}} = p^{m_0(\tilde{N})} \cdot g_{\tilde{N}}, \] 
 with $p \nmid \tilde{g}_N$. It follows from the proof of \cite[Theorem~4.6]{local_max} that either $g_{\tilde{N}}$ is in Weierstrass normal form with respect to $T_l$, say, 
 \[ g_{\tilde{N}} = T_l^{\tilde{k}} + \tilde{h}_{\tilde{k} - 1} T_l^{\tilde{k} - 1} + \ldots + \tilde{h}_0 \] 
 for suitable ${\tilde{h}_0, \ldots, \tilde{h}_{\tilde{k}-1} \in \Z_p[[T_1, \ldots, T_{l-1}]]}$, in which case we may conclude (from the same proof) that ${\tilde{k} \le k}$, or 
 \[ \rg_n(E_{\tilde{N}}) \ge m_0(\tilde{N}) \cdot (p^{2n} - p^n) + p^n (p-1). \] 
 In view of our choice of $n$, the latter alternative is not possible. Since moreover 
 \[ l_0(\tilde{N}) \le \tilde{k} \] 
 by \cite[Lemma~4.1]{local_max}, assertion (b) from the theorem follows. 
\end{proof} 

In the case ${l = 1}$ of $\Z_p$-covers of $X$, we can actually prove a more precise statement. 
\begin{thm} \label{thm:local_max2} 
    Assume that $l=1$ and let ${X_\infty \in \mathcal{E}^1(X)}$. Then there exists an integer ${m \in \N}$ such that the following statements hold: \begin{compactenum}[(a)] 
      \item For each ${\tilde{X}_\infty \in U := \mathcal{E}^1(X_\infty,m)}$, we have 
      \[ \mu(\tilde{X}_\infty) \le \mu(X_\infty). \] 
      \item ${\lambda(\tilde{X}_\infty) = \lambda(X_\infty)}$ for each ${\tilde{X}_\infty \in U}$ which satisfies ${\mu(\tilde{X}_\infty) = \mu(X_\infty)}$. 
    \end{compactenum} 
\end{thm}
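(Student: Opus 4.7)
The argument follows the blueprint of the proof of Theorem~\ref{thm:local_max1}, simplified considerably by the specialisation to $l=1$. Let $N = M_\Lambda/\textup{Pr}_\Lambda$; by Corollary~\ref{cor:zusammenhangzuN} the invariants $\mu(X_\infty)$ and $\lambda(X_\infty)$ in the statement coincide with the classical $\mu$- and $\lambda$-invariants of $N$ as a finitely generated torsion $\Lambda$-module, where $\Lambda = \Z_p\llbracket T\rrbracket$. By Lemma~\ref{lemma:no-pseudo-null}, $N$ contains no non-trivial finite submodule, so the one-variable structure theorem provides an injective pseudo-isomorphism $\iota\colon N \hookrightarrow E_N$ into an elementary $\Lambda$-module with finite cokernel $B$.

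The main step is a Fukuda-type stabilisation. Write $f_N = p^{\mu(X_\infty)}\cdot g_N$ with $g_N$ distinguished of degree $\lambda(X_\infty)$, and fix an integer $n$ large enough so that $g_N$ is coprime to $\omega_n(T)$ and so that $p^n(p-1)$ exceeds $\nu(X_\infty) + \lambda(X_\infty) + 2\cdot v_p(|B|)$, where $\nu(X_\infty)$ is the classical $\nu$-invariant of $N$. For the ideal $I = (\omega_n(T))$, the sequence $\rg_I(J(X_k))$ is non-decreasing in $k$ with finite limit $\rg_I(N)$, so there exists $m \in \N$ such that $\rg_I(J(X_m)) = \rg_I(J(X_{m+1})) = \rg_I(N)$. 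Set $U := \mathcal{E}^1(X_\infty, m+1)$. For any $\tilde{X}_\infty \in U$ the intermediate graphs $\tilde{X}_k$ coincide with $X_k$ for $k \le m+1$ (by Definition~\ref{def:voltagelevel}), hence $J(\tilde{X}_k) \cong J(X_k)$ as $\Lambda$-modules for these $k$, and Theorem~\ref{thm:fukuda} applied to $\tilde{X}_\infty$ yields $\rg_I(\tilde{N}) = \rg_I(N)$.

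For assertion (a), $\tilde{N}$ also has no non-trivial finite submodule by Lemma~\ref{lemma:no-pseudo-null}, and applying the snake lemma for multiplication by $\omega_n(T)$ to the short exact sequence $0 \to \tilde{N} \to E_{\tilde{N}} \to \tilde{B} \to 0$ yields the uniform lower bound $\rg_I(\tilde{N}) \ge \mu(\tilde{X}_\infty)\cdot p^n - 2\cdot v_p(|\tilde{B}|)$. Combining this with $\rg_I(\tilde{N}) = \rg_I(N) = \mu(X_\infty)p^n + O(n)$ and the size condition on $n$ forces $\mu(\tilde{X}_\infty) \le \mu(X_\infty)$. For (b), choose a second exponent $n' > n$ satisfying the analogous conditions and enlarge the level of agreement so that $U$ accommodates the Fukuda stabilisation for $I' = (\omega_{n'}(T))$ as well. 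When $\mu(\tilde{X}_\infty) = \mu(X_\infty)$, the two equalities $v_p(|\tilde{N}/\omega_{n_i}\tilde{N}|) = v_p(|N/\omega_{n_i}N|)$ for $i = 1, 2$, evaluated via the classical Iwasawa formula (once $n_i$ exceeds the stabilisation thresholds of both modules), reduce after cancellation of the common $\mu\cdot p^{n_i}$ term to a $2 \times 2$ linear system in $(\lambda(\tilde{X}_\infty), \nu(\tilde{X}_\infty))$ with non-zero determinant $n' - n$, forcing $\lambda(\tilde{X}_\infty) = \lambda(X_\infty)$.

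The principal obstacle is securing uniform control of $v_p(|\tilde{B}|)$ and of the Iwasawa stabilisation threshold $n_0(\tilde{N})$ as $\tilde{X}_\infty$ varies in $U$, since both are a priori module-dependent. One handles this by a bootstrapping argument paralleling that in the proof of Theorem~\ref{thm:local_max1}: the inequality from (a) uniformly bounds $\mu(\tilde{X}_\infty)$, and a similar Fukuda comparison bounds $\lambda(\tilde{X}_\infty)$ from above, which in turn constrains both the degree and root valuations of the distinguished polynomial $g_{\tilde{N}}$ uniformly in $U$ (possibly after further shrinking); the technical details then follow the methods of~\cite{local_max}.
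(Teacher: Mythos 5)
The overall blueprint — Fukuda stabilisation on a neighbourhood $U$, injectivity of the pseudo-isomorphism from Lemma~\ref{lemma:no-pseudo-null}, then comparison of Iwasawa-type growth formulas for $N$ and $\tilde N$ — is the right one and matches the paper. However, there is a genuine gap in your choice of ideal, and a secondary misconception about where the error terms come from.

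\textbf{The choice $I=(\omega_n(T))$ does not work.} You stipulate \lq\lq choose $n$ so that $g_N$ is coprime to $\omega_n(T)$'', but since $T\mid\omega_n(T)$ for every $n$, this requires $T\nmid g_N$ — and that fails here. Indeed, $L/N\cong\Lambda/(T)$, so $\textup{char}(L)=T\cdot\textup{char}(N)$, and the paper's explicit computations (e.g.\ $\textup{char}(J_\infty)\sim T$ in the first example, $\sim (n+2)T^2$ in Example~\ref{ex:mu}, $\sim (a-1)T$ later) all have $T\mid g_N$; equivalently $T^2\mid\det(\Delta_\infty)$, which reflects the double pole of the Ihara zeta function at $u=1$. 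When $T\mid g_N$, the quotient $N/\omega_n N$ is \emph{infinite} for every $n$, so $\rg_{(\omega_n)}\big(J(X_k)\big)$ is unbounded in $k$ and the Fukuda stabilisation you invoke never occurs; you cannot even define the neighbourhood $U$. The paper uses $I=(\nu_{2n,n}(T))$ with $\nu_{2n,n}=\omega_{2n}/\omega_n$ precisely to evade this: $\nu_{2n,n}(0)=p^n\neq 0$, and its irreducible factors are cyclotomic of ever-growing degree, hence coprime to the fixed distinguished polynomial $g_N$ once $n$ is large. With that choice $N/\nu_{2n,n}N$ is finite, Fukuda applies, and one gets the clean closed form $\rg_n(E_N)=\mu(N)(p^{2n}-p^n)+\lambda(N)\,n$ with no constant term, which is what drives the single-$n$ degree argument in the paper's part~(b). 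Your two-exponent $2\times2$-system idea is a legitimate alternative once the ideal is corrected, though it is slightly heavier than the paper's dichotomy $\lambda(\tilde N)\ge p^n(p-1)$ or $\lambda(\tilde N)<p^n(p-1)$.

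\textbf{The worry about controlling $v_p(|\tilde B|)$ uniformly is misplaced, and the inequality you write is not what the snake lemma gives.} Applying the snake lemma for multiplication by $\nu$ (or $\omega_n$) to $0\to\tilde N\to E_{\tilde N}\to\tilde B\to 0$ and taking Euler characteristics, the two $\tilde B$-terms cancel because $|\tilde B[\nu]|=|\tilde B/\nu\tilde B|$ for a finite module $\tilde B$; what remains is $\rg_I(\tilde N)=\rg_I(E_{\tilde N})+v_p(|\tilde N[\nu]|)-v_p(|E_{\tilde N}[\nu]|)$. So the correction is a $\nu$-torsion term, not a cokernel term, and once Fukuda (with the correct ideal) certifies that $\rg_I(\tilde N)$ is finite, $\tilde N[\nu]$ and $E_{\tilde N}[\nu]$ vanish and the correction is identically zero. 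This is exactly how the paper avoids any uniform bound on auxiliary pseudo-null pieces: Lemma~\ref{lemma:no-pseudo-null} makes the pseudo-isomorphism an injection for \emph{both} $N$ and every $\tilde N$, and the $\nu_{2n,n}$-choice kills the remaining torsion contribution.
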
 
\begin{proof} 
  For statement (a), we could use the previous theorem, since 
  \[ \mu(N) = \mu(\varprojlim_m J(X_m))\] 
  holds also in the case ${l = 1}$ (see also Corollary~\ref{cor:zusammenhangzuN}). However, it is not hard to give an argument which will prove both (a) and (b). 
  
  Fix a pseudo-isomorphism ${\varphi: N \longrightarrow E_N}$, where $E_N$ is an elementary $\Lambda$-module. Then ${N^\circ = \ker(\varphi)}$ is trivial, i.e. $\varphi$ is an injection. Indeed, $N^\circ$ is actually equal to the maximal finite $\Lambda$-submodule of $N$; therefore the claim follows from Lemma~\ref{lemma:no-pseudo-null}. We stress that when compared to the proof of Theorem~\ref{thm:local_max1}, we have interchanged the roles of the set of definition and the image set in the definition of $\varphi$. 
  
  Moreover, the quotient $N/\nu_{2n,n}(T)$ is finite for each sufficiently large $n$ since $N$ is a torsion $\Lambda$-module and the polynomials $\nu_{n+1,n}(T)$ are pairwise coprime. 
  
  Letting 
  \[ \rg_n(N) := v_p(|N/\nu_{2n,n}(T)|) \] 
  (and similarly for other torsion $\Lambda$-modules), we have 
  \begin{align*} \rg_n(N) &= \rg_n(E_N) + \rg_n(N^\circ) \\ 
  & = \rg_n(E_N) + v_p(|N^\circ|), \end{align*} 
  where the last equality holds for each sufficiently large $n$ (see \cite[proof of Theorem~3.10]{local_beh} for the first equation). Since $N^\circ$ is trivial, we actually may conclude that 
  \[ \rg_n(N) = \rg_n(E_N) \] 
  for each ${n \in \N}$. 
  
  Now let $n$ be large enough such that \begin{compactenum}[(a)] 
    \item $\rg_n(N)$ is finite, and 
    \item $p^n (p-1) > n \cdot \lambda(N)$. 
  \end{compactenum} 
  As in the proof of Theorem~\ref{thm:local_max1} we can use the stabilisation property from Theorem~\ref{thm:fukuda} in order to define a neighbourhood ${U = \mathcal{E}^1(X_\infty,m)}$ of $X_\infty$ such that 
  \[ \rg_n(\tilde{X}_\infty) = \rg_n(X_\infty) \] 
  for each ${\tilde{X}_\infty \in U}$. By the above, since the maximal pseudo-null submodule of $\tilde{N}$ is also trivial, we have 
  \begin{align} \rg_n(E_{\tilde{N}}) &= \rg_n(E_N) \nonumber \\ 
    & = \mu(N) \cdot (p^{2n} - p^n) + \lambda \cdot n \end{align} 
for each ${\tilde{X}_\infty \in U}$, where $E_{\tilde{N}}$ is an elementary $\Lambda$-module attached to $\tilde{N}$, and where the last equation holds in view of the property (b) above (see the proof of \cite[Theorem~3.10]{local_beh}). 

Since ${\rg_n(E_{\tilde{N}}) \ge \mu(\tilde{N}) \cdot (p^{2n} - p^n)}$, the choice of $n$ implies that ${\mu(\tilde{N}) \le \mu(N)}$. This proves (a). 

Now suppose that ${\mu(\tilde{N}) = \mu(N)}$. If ${\lambda(\tilde{N}) \ge p^n(p-1)}$, then 
\[ \rg_n(E_{\tilde{N}}) \ge \mu(\tilde{N}) \cdot (p^{2n} - p^n) + p^n(p-1) \] 
by the proof of \cite[Theorem~3.10]{local_beh}. Since this is not possible in view of our choice of $n$, it follows that ${\lambda(\tilde{N}) < p^n(p-1)}$. Then the proof of \cite[Theorem~3.10]{local_beh} implies that 
\[ \rg_n(E_{\tilde{N}}) = \mu(\tilde{N}) \cdot (p^{2n} - p^n) + n \cdot \lambda(\tilde{N}). \] 
Since ${\rg_n(E_{\tilde{N}}) = \rg_n(E_N)}$ by the above, we may conclude that ${\lambda(\tilde{N}) = \lambda(N)}$. 
\end{proof} 

\begin{cor} 
  The $m_0$-invariant is bounded globally on $\mathcal{E}^l(X)$, i.e. there exists a constant $C$ such that 
  \[ m_0(X_\infty) \le C \] 
  for each ${X_\infty \in \mathcal{E}^l(X)}$. 
  
  Moreover, if the $m_0$-invariant is constant on $\mathcal{E}^l(X)$, then the $l_0$-invariant is bounded globally on $\mathcal{E}^l(X)$. 
\end{cor}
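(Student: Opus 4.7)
The plan is to combine the local boundedness results from Theorems~\ref{thm:local_max1} and~\ref{thm:local_max2} with the compactness of $\mathcal{E}^l(X)$ provided by Lemma~\ref{lemma:top_kompakt}. A routine open-cover argument will immediately yield a uniform global bound from pointwise local bounds. I expect no serious obstacle; the only thing to verify is that the neighbourhoods produced by the local maximum theorems depend on the chosen centre point only in a way that is compatible with a compactness argument, which is automatic since each neighbourhood is open in Greenberg's topology.

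For the first statement, I would argue as follows. Fix an arbitrary ${X_\infty \in \mathcal{E}^l(X)}$. If ${l \ge 2}$, Theorem~\ref{thm:local_max1} supplies an integer ${i = i(X_\infty)}$ such that with ${U(X_\infty) := \mathcal{E}(X_\infty, i)}$ one has $m_0(\tilde{X}_\infty) \le m_0(X_\infty)$ for every ${\tilde{X}_\infty \in U(X_\infty)}$. If ${l = 1}$, Theorem~\ref{thm:local_max2} provides the analogous statement for the $\mu$-invariant. In either case the sets $U(X_\infty)$ form an open cover of $\mathcal{E}^l(X)$. By Lemma~\ref{lemma:top_kompakt}, there exist finitely many voltage covers $X_\infty^{(1)}, \ldots, X_\infty^{(r)}$ such that ${\mathcal{E}^l(X) = \bigcup_{j=1}^r U(X_\infty^{(j)})}$. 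Setting
\[ C := \max_{1 \le j \le r} m_0\bigl(X_\infty^{(j)}\bigr), \]
every ${X_\infty \in \mathcal{E}^l(X)}$ lies in some $U(X_\infty^{(j)})$ and therefore satisfies ${m_0(X_\infty) \le m_0(X_\infty^{(j)}) \le C}$, as required.

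For the second statement, assume that $m_0$ is constant on $\mathcal{E}^l(X)$. For each ${X_\infty \in \mathcal{E}^l(X)}$, parts (a) and (b) of Theorem~\ref{thm:local_max1} (respectively Theorem~\ref{thm:local_max2}) yield integers $i(X_\infty)$ and $k(X_\infty)$ and a neighbourhood ${U(X_\infty) := \mathcal{E}(X_\infty, i(X_\infty))}$ such that the implication
\[ m_0(\tilde{X}_\infty) = m_0(X_\infty) \ \Longrightarrow \ l_0(\tilde{X}_\infty) \le k(X_\infty) \]
holds for every ${\tilde{X}_\infty \in U(X_\infty)}$. By the hypothesis that $m_0$ is constant, the premise of this implication is automatically satisfied on $U(X_\infty)$, so $l_0$ is bounded by $k(X_\infty)$ throughout $U(X_\infty)$. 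Compactness of $\mathcal{E}^l(X)$ again produces a finite subcover ${U(X_\infty^{(1)}), \ldots, U(X_\infty^{(s)})}$, and the constant
\[ C' := \max_{1 \le j \le s} k\bigl(X_\infty^{(j)}\bigr) \]
then dominates $l_0(X_\infty)$ for every ${X_\infty \in \mathcal{E}^l(X)}$. This completes the proof.
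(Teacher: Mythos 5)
Your argument is correct and is exactly the approach the paper has in mind: the paper's proof is a one-line pointer to Theorem~\ref{thm:local_max1} (and implicitly Theorem~\ref{thm:local_max2} for $l=1$) plus compactness from Lemma~\ref{lemma:top_kompakt}, and your write-up simply spells out the routine finite-subcover argument that makes this precise. No gaps; nothing to add.
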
 
\begin{proof} 
  This follows by combining Theorem~\ref{thm:local_max1} with Lemma~\ref{lemma:top_kompakt}. 
\end{proof} 
Again, we prove a more precise statement in the one-dimensional case. 
\begin{cor} \label{cor:unbounded}
  Let ${l =1}$, and let ${X_\infty \in \mathcal{E}^l(X)}$. We fix a neighbourhood $U$ of $X_\infty$ as in Theorem~\ref{thm:local_max2}. Then there exists a potentially smaller neighbourhood ${U' \subseteq U}$ of $X_\infty$ with the following property: 
  
  Either ${\mu(\tilde{X}_\infty) = \mu(X_\infty)}$ for each ${\tilde{X}_\infty \in U'}$, or $\lambda$ is unbounded on $U'$. 
\end{cor}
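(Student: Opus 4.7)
The plan is to prove the statement by cases. Let $\mu_0 := \mu(X_\infty)$ and $\lambda_0 := \lambda(X_\infty)$. First I would ask whether there exists a neighbourhood $V \subseteq U$ of $X_\infty$ on which $\mu$ is identically $\mu_0$. If so, $U' := V$ realises the first alternative and we are done. So assume from now on that no such $V$ exists; I would then take $U' := U$ itself and show that $\lambda$ is unbounded on $U$.

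Suppose for contradiction that $\lambda(\tilde{X}_\infty) \leq B$ for every $\tilde{X}_\infty \in U$. Choose an integer $n$ large enough that $\rg_n(N)$ is finite, $p^n(p-1) > \max(n \lambda_0, B)$, and $(p^{2n} - p^n)/n > B$. The proof of Theorem~\ref{thm:local_max2} then gives
\[ \rg_n(N) = \mu_0 (p^{2n} - p^n) + n \lambda_0. \]
By Theorem~\ref{thm:fukuda} there exists $m_n \in \N$ such that the neighbourhood $U_n := \mathcal{E}^1(X_\infty, m_n + 1) \cap U$ satisfies $\rg_n(\tilde{N}) = \rg_n(N)$ for every $\tilde{X}_\infty \in U_n$. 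Since by assumption no neighbourhood of $X_\infty$ has $\mu$ identically $\mu_0$, the set $U_n$ must contain some $\tilde{X}_\infty$ with $\mu(\tilde{X}_\infty) < \mu_0$.

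For such a $\tilde{X}_\infty$, Lemma~\ref{lemma:no-pseudo-null} guarantees that $\tilde{N}$ contains no nontrivial pseudo-null (equivalently, finite) $\Lambda$-submodule, so the identity $\rg_n(\tilde{N}) = \rg_n(E_{\tilde{N}})$ used in the proof of Theorem~\ref{thm:local_max2} is still available. Because $\lambda(\tilde{X}_\infty) \leq B < p^n(p-1)$, that same proof yields the exact formula
\[ \rg_n(\tilde{N}) = \mu(\tilde{X}_\infty)(p^{2n} - p^n) + n \lambda(\tilde{X}_\infty). \]
Comparing this with $\rg_n(\tilde{N}) = \rg_n(N)$ and using $\mu(\tilde{X}_\infty) \leq \mu_0 - 1$ forces
\[ \lambda(\tilde{X}_\infty) \geq \lambda_0 + \frac{p^{2n} - p^n}{n} > \lambda_0 + B, \]
contradicting the assumption $\lambda(\tilde{X}_\infty) \leq B$. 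Hence $\lambda$ must be unbounded on $U$, which establishes the second alternative.

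The hard part, which is in the end rather mild, is ensuring that an element with $\mu < \mu_0$ can always be located inside the Fukuda-shrunk neighbourhood $U_n$, not merely in the original $U$; this is built into the negation of the first alternative by quantifying over \emph{all} neighbourhoods of $X_\infty$. Everything else is a quantitative repackaging of the estimates already extracted in the proof of Theorem~\ref{thm:local_max2}, combined with the absence of nontrivial pseudo-null submodules from Lemma~\ref{lemma:no-pseudo-null} and with Fukuda's stabilisation (Theorem~\ref{thm:fukuda}).
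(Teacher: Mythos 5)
Your proposal is correct and takes essentially the same approach as the paper: both rely on Fukuda's stabilisation (Theorem~\ref{thm:fukuda}), the triviality of the maximal finite submodule (Lemma~\ref{lemma:no-pseudo-null}), and the exact rank formula $\rg_n(E_{\tilde{N}}) = \mu(\tilde{N})(p^{2n}-p^n) + n\lambda(\tilde{N})$ from the proof of Theorem~\ref{thm:local_max2}. The only difference is presentational — the paper lets $n' \to \infty$ and concludes directly that $\lambda$ is unbounded, whereas you assume a global bound $B$ and derive a contradiction by choosing $n$ once; both amount to the same comparison of $\rg_n(N)$ with $\rg_n(\tilde{N})$ for an $\tilde{X}_\infty$ with strictly smaller $\mu$.
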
 
\begin{proof} 
  We will use the notation from the proof of Theorem~\ref{thm:local_max2}. Suppose that $n$ has been chosen such that ${p^n (p-1) > n \cdot \lambda(N)}$, and that ${\mu(\tilde{N}) < \mu(N)}$. For any ${\tilde{X}_\infty \in U}$ with $\mu(\tilde{N})<\mu(N)$, we have that either ${\lambda(\tilde{N}) \ge p^n(p-1)}$, or the equation 
  \[ \rg_n(E_{\tilde{N}}) = \rg_n(E_N) \] 
  implies that 
  \begin{align*} \lambda(\tilde{N}) & \ge \lambda(N) + \Big\lfloor \frac{p^{2n}-p^n}{n} \Big\rfloor \cdot (\mu(N) - \mu(\tilde{N})) \\ 
  & \ge \Big\lfloor \frac{p^{2n} - p^n}{n} \Big\rfloor. \end{align*} 
  Here $\lfloor a \rfloor$ means the largest integer which is smaller than or equal to $a$, respectively. 
  
  Now choose an integer $n' > n$. Then we obtain a possibly smaller neighbourhood of $X_\infty$ such that 
  \[ \rg_{n'}(E_{\tilde{N}}) = \rg_{n'}(E_N) \] 
  for each $\tilde{X}_\infty$ which is contained in this smaller neighbourhood. In this neighbourhood, we will have that 
  \[ \lambda(\tilde{N}) \ge \min(p^{n'}(p-1), \Big\lfloor \frac{p^{2n'} - p^{n'}}{n'} \Big\rfloor). \] 
  Letting $n'$ tend to infinity, we may conclude that the $\lambda$-invariant is unbounded on a neighbourhood of $K_\infty$. 
\end{proof} 

\section{A weak control theorem}
Before we can prove a control theorem we need to introduce one further group associated to our voltage graphs: We write ${\textup{Pic}(X_m) = \textup{Div}(X_m)/\textup{Pr}(X_m)}$. Note that this is an infinite abelian group of $\Z_p$-rank $1$. We define 
\[ \textup{Pic}_\infty=\textup{Pic}(X_\infty)\otimes \Lambda. \] 
Analogously to Lemma \ref{lemma:projective-limit} one can show that ${\textup{Pic}_\infty\cong \varprojlim_m \textup{Pic}(X_m)}$. In particular, we have a exact sequence of $\Lambda$-modules
\[0\longrightarrow \textup{Pr}_\Lambda\longrightarrow \textup{Div}_\Lambda \longrightarrow \textup{Pic}_\infty\longrightarrow 0. \]

Recall that we have seen in the proof of Lemma \ref{lemma:projective-limit} that the natural map ${\textup{Div}^0_\Lambda\to \textup{Div}_\Lambda}$ is injective. Thus, we obtain a natural injection 
\[J_\infty\longrightarrow \textup{Pic}_\infty.\]
\begin{lemma}
\label{lemma:control-finitelevel}
The natural maps
\[r_m\colon \textup{Pic}_\infty/I_m\textup{Pic}_\infty\longrightarrow \textup{Pic}(X_m)\]
are isomorphisms.
\end{lemma}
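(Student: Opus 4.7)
The plan is to reduce the statement to the computations already performed in the proof of Theorem~\ref{thm:jacobian} and show that both sides of $r_m$ can be rewritten as the same quotient of $\textup{Div}_\Lambda$.

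First, I would unpack the left-hand side. Since $\textup{Pic}_\infty = \textup{Div}_\Lambda/\textup{Pr}_\Lambda$ by the short exact sequence mentioned just before the lemma, we immediately obtain
\[ \textup{Pic}_\infty/I_m\textup{Pic}_\infty \;\cong\; \textup{Div}_\Lambda/(\textup{Pr}_\Lambda + I_m\textup{Div}_\Lambda). \]
Thus the task is to show that the right-hand side $\textup{Pic}(X_m) = \textup{Div}(X_m)/\textup{Pr}(X_m)$ admits exactly the same presentation.

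Next I would invoke the observations (a), (b), (d) established in the proof of Theorem~\ref{thm:jacobian}. By observation (b), the canonical projection $\textup{Div}(X_\infty) \longrightarrow \textup{Div}(X_m)$ extends to an isomorphism $\textup{Div}_\Lambda/I_m\textup{Div}_\Lambda \cong \textup{Div}(X_m)$. By observation (a), the restriction of $\pi_m$ to $\textup{Pr}(X_\infty)$ surjects onto $\textup{Pr}(X_m)$, so under the above identification the submodule $\textup{Pr}(X_m) \subseteq \textup{Div}(X_m)$ corresponds to $(\textup{Pr}(X_\infty)+I_m\textup{Div}_\Lambda)/I_m\textup{Div}_\Lambda$. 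Finally, observation (d) (which is really a density argument in the $I_m$-adic topology on $\textup{Div}_\Lambda$) gives
\[ \textup{Pr}(X_\infty) + I_m\textup{Div}_\Lambda \;=\; \textup{Pr}_\Lambda + I_m\textup{Div}_\Lambda. \]

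Combining these identifications, $\textup{Pic}(X_m) \cong \textup{Div}_\Lambda/(\textup{Pr}_\Lambda + I_m\textup{Div}_\Lambda)$, and a direct diagram chase shows that the canonical map $r_m$ agrees with the identity on this common presentation. There is no genuine obstacle here; the only non-formal ingredient is the density argument behind observation (d), but that has already been established in the proof of Theorem~\ref{thm:jacobian} and is reused verbatim. Compared to the Jacobian case, the argument is actually cleaner: one does not need to introduce the auxiliary module $M_\Lambda$ of degree-zero divisors, since $\textup{Pic}_\infty$ sits inside the free module $\textup{Div}_\Lambda$ without any $\Z_p$-rank-one defect. This explains why control holds on the nose rather than only up to a pseudo-null error.
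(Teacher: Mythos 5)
Your proposal is correct and uses the same underlying ingredients as the paper — namely observations (a), (b), (d) from the proof of Theorem~\ref{thm:jacobian} — with the only cosmetic difference that the paper organizes the computation via the snake lemma applied to the commutative diagram of short exact sequences, while you directly identify both $\textup{Pic}_\infty/I_m\textup{Pic}_\infty$ and $\textup{Pic}(X_m)$ with $\textup{Div}_\Lambda/(\textup{Pr}_\Lambda + I_m\textup{Div}_\Lambda)$. These are equivalent phrasings of the same argument.
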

\begin{proof}
 Consider the tautological commutative diagram
  \[\begin{tikzcd}
  0\arrow[r]&\textup{Pr}_\Lambda\arrow[r]\arrow[d,"h_m"]&\textup{Div}_\Lambda\arrow[r]\arrow[d,"g_m"]&\textup{Pic}_\infty\arrow[r]\arrow[d,"r_m"]&0\\
  0\arrow[r]&\textup{Pr}(X_m)\arrow[r]&\textup{Div}(X_m)\arrow[r]&\textup{Pic}(X_m)\arrow[r]&0
  \end{tikzcd}\]  
  It follows from the proof of Theorem~\ref{thm:jacobian} that $h_m$ and $g_m$ are surjective. Hence, $r_m$ is surjective. Applying the snake lemma we obtain a short exact sequence
  \[0\longrightarrow \ker(h_m)\longrightarrow \ker(g_m)\longrightarrow \ker(r_m)\longrightarrow 0. \]
  Recall that $\textup{Div}_\Lambda$ is $\Lambda$-free in the generators $v_{i,0}$. Likewise $\textup{Div}(X_m)$ is $\Lambda/I_m$-free in the same generators. It follows that $\ker(g_m)=I_m\textup{Div}_\Lambda$ and the above sequence becomes
  \[0\longrightarrow \textup{Pr}_\Lambda\cap I_m\textup{Div}_\Lambda\longrightarrow I_m\textup{Div}_\Lambda\longrightarrow \ker(r_m)\longrightarrow 0. \]
  Thus, $\ker(r_m)$ is the image of $I_m\textup{Div}_\Lambda$ in $\textup{Pic}_\infty$, and therefore $\ker(r_m)\cong I_m\textup{Pic}_\infty$. 
\end{proof}

Recall from Section~\ref{section:J(X_m)} that 
\[ J_\infty := J(X_\infty) \otimes \Lambda = \varprojlim_m J(X_m), \] 
and that ${I_m \subseteq \Lambda}$ is an ideal for each ${m \in \N}$. 
\begin{lemma}
\label{control-finite-level-ii}
The natural maps
\[r'_m \colon J_\infty/I_m\longrightarrow J(X_m)\]
are surjective. The $p$-rank of their kernels is bounded by $l$. 
\end{lemma}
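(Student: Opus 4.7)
The plan is straightforward once the structural identifications from Sections~\ref{section:J(X_m)} and~\ref{section:main_conjecture} are assembled. By Lemma~\ref{lemma:projective-limit}, $J_\infty \cong N$, so $J_\infty/I_m J_\infty \cong N/I_m N$. Theorem~\ref{thm:jacobian} together with observation~(c) in its proof (which shows that $I_m \textup{Div}_\Lambda \subseteq M_\Lambda$, and hence that $I_m L \subseteq N$) yields $J(X_m) \cong N/I_m L$. Since $N \subseteq L$, we have $I_m N \subseteq I_m L$, so the map $r'_m$ is simply the canonical quotient map $N/I_m N \longrightarrow N/I_m L$. This proves surjectivity and identifies the kernel as $I_m L/I_m N$.

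To bound the $p$-rank of this kernel, I would apply the long exact Tor sequence to the short exact sequence
\[ 0 \longrightarrow N \longrightarrow L \longrightarrow L/N \longrightarrow 0, \]
using that $L/N \cong \Z_p$, as recorded in Corollary~\ref{cor:zusammenhangzuN}. Tensoring with $\Lambda/I_m$ produces an exact fragment $\textup{Tor}_1^\Lambda(\Z_p, \Lambda/I_m) \longrightarrow N/I_m N \longrightarrow L/I_m L$. The kernel of $N/I_m N \longrightarrow N/I_m L$ coincides with the kernel of $N/I_m N \longrightarrow L/I_m L$ (in each case it is represented by elements of $N$ lying in $I_m L$, and $I_m L \subseteq N$), so $\ker(r'_m)$ is a quotient of $\textup{Tor}_1^\Lambda(\Z_p, \Lambda/I_m)$.

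The final step is to compute this Tor module. Writing $\Z_p = \Lambda/J$ with $J = (T_1, \ldots, T_l)$, the short exact sequence $0 \to J \to \Lambda \to \Z_p \to 0$ gives
\[ \textup{Tor}_1^\Lambda(\Z_p, \Lambda/I_m) \cong \ker\bigl(J/I_m J \longrightarrow \Lambda/I_m\bigr) = (J \cap I_m)/I_m J. \]
Since $\omega_m(T_i) = (T_i+1)^{p^m} - 1 \in (T_i) \subseteq J$, one has $I_m \subseteq J$, so the quotient simplifies to $I_m/I_m J$. As a module over $\Lambda/J \cong \Z_p$, this is generated by the images of the $l$ elements $\omega_m(T_1), \ldots, \omega_m(T_l)$, and therefore has $\Z_p$-rank (hence also $p$-rank) at most $l$. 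The only mildly delicate point is the identification of $\ker(r'_m)$ with a subquotient controlled by the single Tor group $\textup{Tor}_1^\Lambda(\Z_p, \Lambda/I_m)$; once this is in place, the bound follows purely from the fact that $I_m$ is generated by $l$ elements.
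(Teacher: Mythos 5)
Your proof is correct, and it takes a genuinely different route from the paper. The paper's proof works with the short exact sequences
\[
0 \longrightarrow J_\infty \longrightarrow \textup{Pic}_\infty \longrightarrow \Lambda/J_0 \longrightarrow 0
\quad\text{and}\quad
0 \longrightarrow J(X_m) \longrightarrow \textup{Pic}(X_m) \longrightarrow \Lambda/J_0 \longrightarrow 0,
\]
assembles them into a commutative ladder, and applies the snake lemma together with the preceding control lemma for $\textup{Pic}_\infty$ (which identifies $\ker r_m$ with $I_m \textup{Pic}_\infty$). The $l$-generation of the kernel is then verified by hand, choosing a lift $x$ of a generator of $\textup{Pic}_\infty/J_\infty \cong \Z_p$ and observing that $\omega_m(T_1)x, \ldots, \omega_m(T_l)x$ generate $\ker(r'_m)/I_m J_\infty$ over $\Z_p$. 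You instead translate everything into the $N \subseteq L$ language of Theorem~\ref{thm:jacobian} and Lemma~\ref{lemma:projective-limit} (note that $L = \textup{Div}_\Lambda/\textup{Pr}_\Lambda$ is literally the paper's $\textup{Pic}_\infty$, so the structural input is the same), then let the Tor long exact sequence for $0 \to N \to L \to \Z_p \to 0$ do the work, and finally compute $\textup{Tor}_1^\Lambda(\Z_p, \Lambda/I_m) \cong I_m/I_m J$ from the presentation $0 \to J \to \Lambda \to \Z_p \to 0$. Your homological approach is a bit more systematic: it avoids introducing the $\textup{Pic}$-module diagram and the auxiliary control Lemma~\ref{lemma:control-finitelevel}, and it makes the source of the $p$-rank bound ($I_m$ being $l$-generated) transparent in one Tor computation. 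The paper's diagram chase, in return, is more elementary and also produces the explicit generating set $\{\omega_m(T_i)x\}$ and the exact identification $\ker(r'_m) = I_m \textup{Pic}_\infty$, which is slightly more information than the mere rank bound. Both proofs are complete; the only subtlety you correctly flag — that $\ker(N/I_m N \to N/I_m L)$ and $\ker(N/I_m N \to L/I_m L)$ coincide — is handled properly using $I_m L \subseteq N$.
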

\begin{proof}
Consider the commutative diagram
  \[\begin{tikzcd}
  0\arrow[r]&J_\infty\arrow[r]\arrow[d,"r'_m"]&\textup{Pic}_\infty\arrow[r]\arrow[d,"r_m"]&\Lambda/J_0\arrow[r]\arrow[d]&0\\
  0\arrow[r]&J(X_m)\arrow[r]&\textup{Pic}(X_m)\arrow[r]&\Lambda/J_0\arrow[r]&0
  \end{tikzcd}\] 
  The right vertical map is an isomorphism. Thus, $r'_m$ is surjective and \[ \ker(r'_m)=\ker(r_m)=(I_m\textup{Div}_\Lambda+\textup{Pr}_\Lambda)/\textup{Pr}_\Lambda.\] 
  Let $x$ be a generator of $\textup{Pic}_\infty/J_\infty$. Then $\ker(r'_m)/I_mJ_\infty$ is generated by \[ \{\omega_m(T_1)x,\dots ,\omega_m(T_l)x\}.\] 
\end{proof}
We are not only interested in the projection to finite level, but also to the projection to $\Z_p$-subcovers. Let ${Y_\infty}$ be a $\Z_p$-subcover of a given $\Z_p^l$-cover $X_\infty$. Let $\Lambda$ and $\Lambda(Y_\infty)$ be the corresponding Iwasawa algebras. Without loss of generality we can assume that the kernel of the induced map $\Lambda \longrightarrow \Lambda(Y_\infty)$ on the Iwasawa algebras is $(T_2, \dots , T_l)$. Let $J'_m\subseteq \Lambda$ be the ideal generated by $\omega_m(T_2),\ldots, \omega_m(T_l)$. We write $J_\infty(Y_\infty)$ and $J_\infty(X_\infty)$ e.t.c to make the corresponding graphs clear.
\begin{lemma} 
The natural map
\[t\colon \textup{Pic}_{\infty}(X_\infty)/J'_0\longrightarrow \textup{Pic}_\infty(Y_\infty)\]
is an isomorphism.
\end{lemma}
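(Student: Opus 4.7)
The plan is to deduce the isomorphism from the compatibility of the defining short exact sequences
\[ 0 \longrightarrow \textup{Pr}_\Lambda \longrightarrow \textup{Div}_\Lambda \longrightarrow \textup{Pic}_\infty \longrightarrow 0 \]
for $X_\infty$ and $Y_\infty$ under reduction modulo $J'_0 = (T_2, \ldots, T_l)$. Recall from the beginning of Section~\ref{section:J(X_m)} that $\textup{Div}_\Lambda(X_\infty)$ is $\Lambda$-free of rank $n$ on the basis $v_{1,0}, \ldots, v_{n,0}$, and that $\textup{Pr}_\Lambda(X_\infty)$ is $\Lambda$-free of rank $n$ on the generators $p_{1,0}^{(X_\infty)}, \ldots, p_{n,0}^{(X_\infty)}$; the analogous assertions hold over $\Lambda(Y_\infty) = \Lambda/J'_0$ for the $\Z_p$-subcover $Y_\infty$.

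The first step is to observe that the voltage assignment ${\alpha \colon S \to \Z_p^l}$ composed with the projection ${\Z_p^l \to \Z_p}$ onto the first coordinate is precisely the voltage assignment $\alpha'$ corresponding to $Y_\infty$, since by assumption this projection induces the quotient ${\Lambda \to \Lambda(Y_\infty)}$ with kernel $J'_0$. Consequently, the entries of the voltage assignment matrix $A_\alpha$ from Section~\ref{section:linearalgebra} reduce modulo $J'_0$ to the entries of $A_{\alpha'}$, and hence $\Delta_\infty(X_\infty)$ specialises to $\Delta_\infty(Y_\infty)$. In particular the generator $p_{i,0}^{(X_\infty)}$ maps to $p_{i,0}^{(Y_\infty)}$ under the natural reduction. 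This yields natural identifications
\[ \textup{Div}_\Lambda(X_\infty)/J'_0 \;\cong\; \textup{Div}_\Lambda(Y_\infty), \qquad \textup{Pr}_\Lambda(X_\infty)/J'_0 \;\cong\; \textup{Pr}_\Lambda(Y_\infty), \]
both compatible with the natural inclusions ${\textup{Pr}_\Lambda \hookrightarrow \textup{Div}_\Lambda}$.

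Finally, I would consider the tautological commutative diagram
\[ \begin{tikzcd}
0 \arrow[r] & \textup{Pr}_\Lambda(X_\infty) \arrow[r] \arrow[d] & \textup{Div}_\Lambda(X_\infty) \arrow[r] \arrow[d] & \textup{Pic}_\infty(X_\infty) \arrow[r] \arrow[d, "t"] & 0 \\
0 \arrow[r] & \textup{Pr}_\Lambda(Y_\infty) \arrow[r] & \textup{Div}_\Lambda(Y_\infty) \arrow[r] & \textup{Pic}_\infty(Y_\infty) \arrow[r] & 0
\end{tikzcd} \]
where the vertical arrows are induced by the reduction ${\Lambda \twoheadrightarrow \Lambda(Y_\infty)}$. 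Tensoring the top row with $\Lambda/J'_0$ produces a right-exact sequence whose first two terms coincide, via the identifications of the previous paragraph, with the first two terms of the bottom row, and whose third term is by definition $\textup{Pic}_\infty(X_\infty)/J'_0$. A standard five-lemma/diagram-chase argument, using the injectivity of ${\textup{Pr}_\Lambda(Y_\infty) \hookrightarrow \textup{Div}_\Lambda(Y_\infty)}$, then forces $t$ to be an isomorphism. No substantial obstacle is to be expected: the only point requiring care is the verification that the images of the generators $p_{i,0}^{(X_\infty)}$ in ${\textup{Div}_\Lambda(X_\infty)/J'_0}$ really generate $\textup{Pr}_\Lambda(Y_\infty)$, which reduces to the compatibility of voltage assignments recorded above.
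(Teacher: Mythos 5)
Your proposal is correct and follows the same essential route as the paper: both build the tautological commutative diagram comparing the defining short exact sequences for $X_\infty$ and $Y_\infty$, and both hinge on the observation that $\textup{Div}_\Lambda$ and $\textup{Pr}_\Lambda$ are free on the $v_{i,0}$ and $p_{i,0}$ and that these generators are compatible with reduction modulo $J'_0$. The only cosmetic difference is that you reduce the top row modulo $J'_0$ first and invoke the five lemma, whereas the paper applies the snake lemma to the un-reduced diagram and identifies $\ker(t)$ with $J'_0\,\textup{Pic}_\infty(X_\infty)$; these are interchangeable.
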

\begin{proof}
Consider the commutative diagram 
\[\begin{tikzcd}
  0\arrow[r]&\textup{Pr}_\Lambda(X_\infty)\arrow[r]\arrow[d,"h"]&\textup{Div}_\Lambda(X_\infty)\arrow[r]\arrow[d,"g"]&\textup{Pic}_\infty(X_\infty)\arrow[r]\arrow[d,"t"]&0\\
  0\arrow[r]&\textup{Pr}_\Lambda(Y_\infty)\arrow[r]&\textup{Div}_\Lambda(Y_\infty)\arrow[r]&\textup{Pic}_\infty(Y_\infty)\arrow[r]&0
  \end{tikzcd}\]
  Again all three vertical maps are surjective and the kernel of $g$ is $J'_0\textup{Div}_\Lambda(X_\infty)$. The rest of the proof is the same as for Lemma \ref{lemma:control-finitelevel}.
\end{proof}
Using the same ideas as in the proof of Lemma \ref{control-finite-level-ii} we obtain
\begin{lemma} \label{lemma:control} 
The natural map 
\[t'\colon J_\infty(X_\infty)/J'_0\longrightarrow J_\infty(Y_\infty)\]
is surjective and the kernel has $p$-rank at most $l-1$. 
\end{lemma}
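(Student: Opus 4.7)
The plan is to mirror the proof of Lemma~\ref{control-finite-level-ii}, replacing the ideal $I_m$ by $J'_0 = (T_2,\ldots,T_l)$ and the level-$m$ quotients by the $\Z_p$-subcover $Y_\infty$. The crucial input is already in place: the preceding lemma tells us that the natural map on $\textup{Pic}_\infty$ becomes an isomorphism after reducing modulo $J'_0$, so in unreduced form it is surjective with kernel exactly $J'_0\,\textup{Pic}_\infty(X_\infty)$.

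Concretely, I would start from the commutative diagram with exact rows
\[\begin{tikzcd}
  0\arrow[r]&J_\infty(X_\infty)\arrow[r]\arrow[d,"t'_0"]&\textup{Pic}_\infty(X_\infty)\arrow[r]\arrow[d,"t"]&\Lambda/J_0\arrow[r]\arrow[d]&0\\
  0\arrow[r]&J_\infty(Y_\infty)\arrow[r]&\textup{Pic}_\infty(Y_\infty)\arrow[r]&\Lambda(Y_\infty)/J_0(Y_\infty)\arrow[r]&0
\end{tikzcd}\]
where $t'_0$ denotes the restriction of $t$ to $J_\infty(X_\infty)$. The rightmost vertical arrow is the identity on $\Z_p$ (both quotients identify canonically with $\Z_p$ via the degree map), hence an isomorphism. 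Applying the snake lemma, I conclude that $t'_0$ is surjective and that $\ker(t'_0) = \ker(t) = J'_0\,\textup{Pic}_\infty(X_\infty)$; in particular this kernel is already contained in $J_\infty(X_\infty)$.

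Passing to the quotient by $J'_0\,J_\infty(X_\infty)$ then yields the desired surjection $t'$, with
\[ \ker(t') \;=\; J'_0\,\textup{Pic}_\infty(X_\infty)\,/\,J'_0\,J_\infty(X_\infty). \]
To count generators, I would pick a lift $x \in \textup{Pic}_\infty(X_\infty)$ of a generator of $\textup{Pic}_\infty(X_\infty)/J_\infty(X_\infty) \cong \Lambda/J_0 \cong \Z_p$. Then $\textup{Pic}_\infty(X_\infty) = J_\infty(X_\infty) + \Lambda\,x$, and because $T_i \in J_0$ forces each $T_i x$ (for $i \ge 2$) to lie in $J_\infty(X_\infty)$, we obtain
\[ J'_0\,\textup{Pic}_\infty(X_\infty) \;=\; J'_0\,J_\infty(X_\infty) + \Lambda\,T_2 x + \ldots + \Lambda\,T_l x. \]
Thus $\ker(t')$ is generated by the $l-1$ images of $T_2 x,\ldots,T_l x$, forcing its $p$-rank to be at most $l-1$.

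I do not foresee a serious obstacle; the one step that warrants a moment's care is the identification of the rightmost vertical arrow as an honest isomorphism, since this is precisely what lets the snake lemma collapse $\ker(t'_0)$ down to $\ker(t)$ rather than to a proper subgroup. This uses the simplifying assumption $\ker(\Lambda \to \Lambda(Y_\infty)) = (T_2,\ldots,T_l)$ stated immediately before the lemma.
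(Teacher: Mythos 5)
Your proof is correct and follows exactly the approach the paper intends: the paper leaves this lemma as ``using the same ideas as in the proof of Lemma~\ref{control-finite-level-ii}'', and you have carried out precisely that adaptation, replacing the ideal $I_m$ by $J_0'=(T_2,\ldots,T_l)$ in the commutative-diagram/snake-lemma argument and then counting the $l-1$ generators $T_2x,\ldots,T_lx$ of $\ker(t')$.
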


\section{Examples} 
In this section we want to compute the (generalised) Iwasawa invariants of certain concrete examples. The starting point is the main conjecture (see Theorem~\ref{thm:main_conjecture}). In view of this result, we can compute the characteristic power series of a $\Z_p^l$-cover (if ${l = 1}$, then we have to keep in mind the Remark~\ref{rem:main_conjecture}). 

In this section, we consider a base graph of the following form: \vspace{1cm} 
\[ \xymatrix{&&&&\\ & x_1 \ar@{-}[rr] \ar@{-}@/^1.2pc/[rr] \ar@{-}@/^0.7pc/[rr] \ar@{-}@/^2pc/[rr] & & x_2 \ar@{-}[rd] & \\ 
x_n \ar@{-}[ur] & & & & x_3 \ar@{-}[ld] \\ 
& x_5 \ar@{-}[rr] \ar@{-}[ul]^{\ldots} & & x_4 &} \]
In other words, we let ${X = X_0}$ be a graph with $n$ edges ${x_1, \ldots, x_n}$, which are connected in a cycle, with one multiple edge, say, between $x_1$ and $x_2$. Let 
\[ \alpha \colon S \longrightarrow \Z_p^l \] 
be a voltage assignment. We write the Galois group multiplicatively. 
We have to study the $n \times n$-matrix 
\[ \Delta_\infty = D - A_\alpha = \begin{pmatrix} 
     a & x & 0 & 0 & 0 & -1 \\ 
     y & a & -1 & 0 & 0 & 0 \\ 
     0 & -1 & 2 & -1& 0 & 0 \\ 
     0 & 0 & \ddots & \ddots & \ddots & 0 \\ 
     0 & 0 & 0 & -1& 2 & -1 \\ 
     0 & 0 & 0 & 0 & -1 & 2 
\end{pmatrix}.\] 
Here we denote by 1 the trivial element of the group $\Z_p^l$, and $x,y \in \Z_p[\Z_p^l]$ are the group ring elements corresponding to the voltage assignment of the multi-edge. More precisely, if ${e_1, \ldots, e_k}$ denote the edges between $x_1$ and $x_2$, then 
\[ x = - \alpha(e_1) - \ldots - \alpha(e_k), \quad y = - \alpha(e_1)^{-1} - \ldots - \alpha(e_k)^{-1}, \] 
and ${a = k+1}$. 

We want to compute the determinant of $\Delta_\infty$, which is almost tridiagonal. To this purpose, we use an idea from the proof of \cite[Theorem~1]{wei}: let $\rho$ be the permutation matrix which has entries  
\[ \rho_{i,j} = \begin{cases} 1 & i = n, j = 1, \\ 
1 & j = i+1, \\ 
0 & \text{otherwise}. \end{cases} \] 
Then the determinant of $\rho$ is equal to $(-1)^{n-1}$, and the matrix product ${\Delta_\infty \cdot \rho}$ is of the form 
\[ \Delta_\infty \cdot \rho = \left(\begin{array}{cc|cccccc} 
   -1 & a & x & 0 & \ldots & \ldots & \ldots & 0 \\ 
   0 & y & a & -1 & 0 & \ldots & \ldots & 0 \\ \hline 
   0 & 0 & -1 & 2 & -1 & 0 & \cdots & 0 \\ 
   \vdots & \vdots & 0 & -1 & 2 & -1 & & \vdots \\ 
   \vdots & & \vdots & & \ddots & \ddots & \ddots & 0\\ 
   0 & \vdots & & & & -1 & 2 & -1 \\ 
   -1 & 0 & \vdots & & & & -1 & 2 \\ 
   2 & -1 & 0 & \cdots & & \cdots & 0 & -1 
\end{array}\right), \] 
which can be viewed as a block matrix $\begin{pmatrix} A & B \\ C & D \end{pmatrix}$. 

Now we apply \emph{Schur's determinant formula} (see \cite[Theorem~4.1]{Ouellette}): 
\[ \det\left(\begin{pmatrix} A & B \\ C & D \end{pmatrix} \right) = \det(D) \cdot \det(A - B \cdot D^{-1} \cdot C). \] 
It is easy to see that 
\[ D^{-1} = \begin{pmatrix} -1 & -2 & -3 & -4& \cdots & -n \\ 0 & -1 & \cdots & & & -(n-1) \\ 
\vdots & & & & & -(n-2) \\ 
& & & & & \vdots \\ 
0 & \cdots & & & & -1 \end{pmatrix} = (m_{ij}), \] 
where 
\[ m_{ij} = \begin{cases} 0 & i > j, \\ 
- j + i -1 & i \le j. \end{cases} \] 
It follows from a straight-forward computation that 
\[ A - BD^{-1} C = \begin{pmatrix} -1 + (n+1) x & a - nx \\ -n + (n+1)a & y - 1 - (a-1)n \end{pmatrix}. \] 
Computing the determinants, we may conclude that 
\[ \det(\Delta_\infty) = -[((n+1)y -1) \cdot x - y + (-a^2+2a-1)\cdot n + (1 - a^2)]. \] 
Here we have used that $\det(\rho) \cdot \det(D) = (-1)^{n-1} \cdot (-1)^{n-2} = -1$. 

In our examples, we first concentrate on the case ${l = 1}$. 
\begin{example} 
  The easiest case is $a = 2$ (i.e., no multiple edge). In this case ${y = x^{-1}}$, and the above formula simplifies to 
  \[ \det(\Delta_\infty) = 2 +x +x^{-1}. \] 
  In particular, the determinant does not depend on $n$ in this special case. Letting ${\tau = T+1}$ be a topological generator of the Galois group of $X_\infty$ over $X$, we have ${x = - \tau = -(T+1)}$ and we obtain the characteristic power series 
  \[ (T+1)^{-1} \cdot (2(T+1) - (T+1)^2 - 1) = (T+1)^{-1} \cdot (-T^2). \] 
  Note that $T+1$ is a unit in $\Lambda$. It follows from Remark~\ref{rem:main_conjecture} that the characteristic power series of $J_\infty$ is equal to $T$. 
  Thus, in this case we obtain ${\lambda(J_\infty)=1}$ independently of ${n = |X_0|}$. 
\end{example} 

\begin{example} \label{ex:mu} 
  In our second example we consider the case ${a = 3}$, i.e. we have two edges between $x_1$ and $x_2$. If the voltage assignments of these two edges are $\tau$ and $1$, respectively, then a similar computation yields that the characteristic power series is associated with 
  \[ (n+2) \cdot T^2. \] 
  Therefore ${\lambda(J_\infty) = 1}$ for all primes $p$, and ${\mu(J_\infty) = 0}$ except for the prime divisors of ${n+2}$. In particular, we see that this family of examples includes voltage covers with \emph{arbitrarily large $\mu$-invariant} (for fixed prime $p$, let ${n = p^r - 2}$ for some large integer $r$). 
  
  If the voltage assignments are $\tau$ and $\tau$, then we obtain that the characteristic power series of $J_\infty$ is associated with 
  \[ 2 T^2, \] 
  i.e. it does not depend on the number $n$ of vertices of $X_0$. 
  
  Finally, suppose that $\alpha(e_1) = \tau$ and $\alpha(e_2) = \tau^2$. Then the characteristic power series is associated with 
  \[ T^2 \cdot (T^2 + (6+n)T +6+n). \] 
  Therefore ${\mu(J_\infty) = 0}$ for all primes $p$, and 
  \[ \lambda(J_\infty) = \begin{cases} 3 & p\mid (n+6); \\ 
  1 & p \nmid (n+6). \end{cases} \] 
\end{example} 

\begin{example} 
  Now suppose that $a$ is arbitrary, and that each $\alpha(e_i)$ is equal to $\tau$, ${i = 1, \ldots, a-1}$. Then we obtain that 
  \[ \det(\Delta_\infty) = -(T+1)^{-1}(a-1)T^2, \] 
  i.e. the characteristic polynomial of $J_\infty$ is associated with ${(a-1)T}$ in this case; in particular this polynomial does not depend on the number of vertices of $X_0$. 
\end{example} 

\begin{example} 
  Now suppose that $a$ is arbitrary and that there exists some integer ${b \le a-1}$ such that 
  \[ \alpha(e_1) = \ldots = \alpha(e_b) = \tau, \quad \alpha(e_{b+1}) = \ldots = \alpha(e_{a-1}) = 1. \] 
  Then we obtain that $\det(\Delta_\infty)$ is associated to 
  \[ T^2((ab - b^2 - b) \cdot n + ab - b^2). \] 
  In particular, this determinant is independent of the number $n$ of vertices of $X_0$ if and only if ${b = 0}$ or ${a = b+1}$. This fact is in accordance with our previous examples. Note that we did not consider examples with ${b = 0}$ since in that case the hypothesis from Lemma~\ref{lemma:connected} is not satisfied, which ensures that all the $X_n$ will be connected. 
\end{example} 

\begin{example} \label{ex:l=2} 
  In our last example we consider a $\Z_p^2$-cover of $X_0$. Let $\sigma$ and $\tau$ denote topological generators of ${\Gal(X_\infty/X) \cong \Z_p^2}$. Consider the case 
  \[ a = 3, \quad \alpha(e_1) = \tau, \quad \alpha(e_2) = \sigma. \] 
  Write ${\tau = T+1}$ and ${\sigma = S+1}$. Then we obtain that the characteristic power series of $J_\infty$ (which is associated with $\det(\Delta_\infty)$ in this example, since ${l > 1}$, i.e. Remark~\ref{rem:main_conjecture} does not apply) gives 
  \[ - \frac{1}{(S+1)(T+1)} \cdot \left((n+2)(T-S)^2 + 2ST + ST^2 + S^2 T\right). \] 
  We see that ${m_0(J_\infty) = 0}$. We will show in the next section that $l_0(J_\infty)$, however, is positive. 
\end{example}

\section{A non-trivial $l_0$-invariant} 
The main goal of this section is to provide an example of a $\Z_p^l$-cover $X_\infty/X$, ${l > 1}$, such that ${l_0(J(X_\infty)) > 0}$. 
For any $\Z_p^l$-cover $X_\infty/X$, we denote by $\mathcal{E}^{\subseteq X_\infty}(X)$ the set of $\Z_p$-covers of $X$ which are contained in $X_\infty$. We make use of the following 
\begin{thm} \label{thm:l0} 
  Let $X_\infty/X$ be a $\Z_p^2$-cover, and suppose that $X_\infty$ contains two $\Z_p$-subcovers $Y_1$ and $Y_2$ of $X$ such that 
  \[ \mu(Y_1) = 0, \quad \mu(Y_2) > 0. \] 
  Then $\lambda$ is unbounded on $\mathcal{E}^{\subseteq X_\infty}(X)$, and ${l_0(J_\infty) > 0}$. 
\end{thm}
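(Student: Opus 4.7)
The plan is to combine the Iwasawa main conjecture (Theorem~\ref{thm:main_conjecture}) with the control theorem on $\textup{Pic}_\infty$ to translate $\mu(Y_1)=0$ and $\mu(Y_2)>0$ into divisibility statements about the characteristic power series ${F := \det(\Delta_\infty) \in \Lambda = \Z_p[[T_1,T_2]]}$ of $J_\infty$, and then to produce a sequence of $\Z_p$-subcovers whose $\lambda$-invariants grow like $p^n$. Let $\Gamma := \Gal(X_\infty/X) \cong \Z_p^2$, write ${F = p^{m_0} G}$ with ${p \nmid G}$, and for each primitive ${\sigma \in \Gamma \setminus \Gamma^p}$ denote by $Y_\sigma$ the $\Z_p$-subcover associated to ${H := \overline{\langle \sigma \rangle}}$, with Iwasawa algebra ${\Lambda(Y_\sigma) = \Lambda/(\sigma - 1) \cong \Z_p[[U]]}$.

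The first key step is to establish that, whenever ${(\sigma - 1) \nmid F}$ (which holds automatically since $J_\infty(Y_\sigma)$ is $\Lambda(Y_\sigma)$-torsion),
\[
  \mu(Y_\sigma) \;=\; v_p\!\bigl(F \bmod (\sigma - 1)\bigr) \;=\; m_0 + v_p\!\bigl(G \bmod (\sigma - 1)\bigr).
\]
This uses the isomorphism ${\textup{Pic}_\infty(X_\infty)/(\sigma - 1) \cong \textup{Pic}_\infty(Y_\sigma)}$ from the lemma immediately preceding Lemma~\ref{lemma:control}, together with the main conjecture for $X_\infty$ (which gives ${\textup{Char}_\Lambda(\textup{Pic}_\infty(X_\infty)) = (F)}$, since ${\Z_p \cong \Lambda/(T_1,T_2)}$ is pseudo-null for ${l = 2}$) and Remark~\ref{rem:main_conjecture} (passing between $\textup{Pic}_\infty(Y_\sigma)$ and $J_\infty(Y_\sigma)$ leaves $\mu$ unchanged and alters $\lambda$ by $+1$). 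Observing that ${v_p(G \bmod (\sigma - 1)) \ge 1}$ if and only if ${\overline{\sigma - 1} \mid \overline{G}}$ in ${\overline{\Lambda} := \Lambda/(p)}$, the hypothesis $\mu(Y_1) = 0$ forces both $m_0 = 0$ and ${\overline{\sigma_1 - 1} \nmid \overline{G}}$, while $\mu(Y_2) > 0$ then forces ${\overline{\sigma_2 - 1} \mid \overline{G}}$. Since ${\sigma_2 \in \Gamma \setminus \Gamma^p}$ and ${(\overline{\sigma_2 - 1})}$ is precisely one of the height-1 primes entering the definition of $l_0$, this gives ${l_0(J_\infty) \ge v_{(\overline{\sigma_2 - 1})}(\overline{G}) \ge 1 > 0}$.

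For the unboundedness of $\lambda$, I would extend $\sigma_2$ to a $\Z_p$-basis $\{\sigma_2, \tau\}$ of $\Gamma$ and consider the primitive elements ${\sigma_n := \sigma_2 \tau^{p^n} \in \Gamma \setminus \Gamma^p}$, so that $Y_{\sigma_n} \in \mathcal{E}^{\subseteq X_\infty}(X)$ approaches $Y_2$ in Greenberg's topology. Because $\overline{G}$ has only finitely many irreducible factors in the UFD $\overline{\Lambda}$ and the $\overline{\sigma_n - 1}$ are pairwise distinct primes, for $n$ sufficiently large ${\overline{\sigma_n - 1} \nmid \overline{G}}$, so $\mu(Y_{\sigma_n}) = 0$. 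The $\lambda$-analogue of the identity above reads ${\lambda(Y_{\sigma_n}) = \lambda(F \bmod (\sigma_n - 1)) - 1}$, which (in the $\mu = 0$ case) equals the order of vanishing at $U = 0$ of the image of $\overline{G}$ in ${\overline{\Lambda}/(\overline{\sigma_n - 1}) \cong \F_p[[U]]}$, minus one. Writing ${\overline{G} = \overline{\sigma_2 - 1} \cdot \overline{H}}$, this order is bounded below by the intersection multiplicity ${I_0(\overline{\sigma_2 - 1}, \overline{\sigma_n - 1})}$ at the origin of $\textup{Spec}(\overline{\Lambda})$. Choosing the basis so that ${\overline{\sigma_2 - 1} = T_2}$, a direct computation yields ${\overline{\sigma_n - 1} = T_1^{p^n}(T_2+1) + T_2}$ and hence
\[
  I_0(\overline{\sigma_2 - 1}, \overline{\sigma_n - 1}) \;=\; \dim_{\F_p} \overline{\Lambda}/(T_2,\, T_1^{p^n}) \;=\; p^n,
\]
so ${\lambda(Y_{\sigma_n}) \ge p^n - 1 \to \infty}$.

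The main technical subtlety lies in the first step: one must route the argument through $\textup{Pic}_\infty$ (where the control map is an actual isomorphism) rather than directly applying the slightly inexact control theorem for $J_\infty$ (Lemma~\ref{lemma:control}, whose kernel could a priori contribute to $\mu$), and one must carefully incorporate the $l = 1$ correction from Remark~\ref{rem:main_conjecture}. Once the identity ${\mu(Y_\sigma) = v_p(F \bmod (\sigma - 1))}$ and its $\lambda$-analogue are in hand, everything else reduces to routine algebra in ${\overline{\Lambda} = \F_p[[T_1, T_2]]}$.
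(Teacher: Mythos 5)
Your proof is correct, but it takes a genuinely different and more explicit route than the paper's. The paper first proves an auxiliary Lemma~\ref{lemma:l0} (that ${\mu(Y_1)=0}$ forces ${\mu(\tilde Y)=0}$ for all but finitely many $\Z_p$-subcovers $\tilde Y$) via the surjective-but-inexact control map $t'$ of Lemma~\ref{lemma:control} together with a Fitting-ideal/annihilator argument, and then obtains the $\lambda$-unboundedness as a black-box application of Corollary~\ref{cor:unbounded}, which in turn rests on the Fukuda-type stabilisation machinery of Theorem~\ref{thm:local_max2}. You instead observe that the \emph{isomorphism} $\textup{Pic}_\infty(X_\infty)/(\sigma-1)\cong\textup{Pic}_\infty(Y_\sigma)$ together with the free rank-$n$ presentation $\textup{Pr}_\Lambda\hookrightarrow\textup{Div}_\Lambda\twoheadrightarrow\textup{Pic}_\infty$ yields the \emph{exact} identities $\mu(Y_\sigma)=v_p(F\bmod(\sigma-1))$ and (when $\mu=0$) $\lambda(Y_\sigma)=v_U(\overline G\bmod\overline{\sigma-1})-1$, after which $m_0=0$, $l_0>0$, and the quantitative bound $\lambda(Y_{\sigma_n})\ge p^n-1$ drop out of elementary intersection theory in $\F_p[[T_1,T_2]]$. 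This has the advantage of being self-contained (no appeal to Section~\ref{sec:Fukuda}) and of producing an explicit growth rate for $\lambda$, at the cost of being somewhat less portable if the clean square presentation were unavailable.

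One small precision worth adding in a final write-up: passing from $\textup{Char}_\Lambda(\textup{Pic}_\infty)=(F)$ to $\textup{Char}_{\Lambda_1}(\textup{Pic}_\infty/(\sigma-1))=(F\bmod(\sigma-1))$ is \emph{not} a formal consequence of the characteristic ideal alone; what you actually need is that $\textup{Fitt}^0_\Lambda(\textup{Pic}_\infty)=(\det\Delta_\infty)=(F)$ (which the proof of Theorem~\ref{thm:main_conjecture} gives from the square free presentation), that Fitting ideals commute with base change, and that for a torsion $\Lambda_1$-module admitting a square presentation with nonzero determinant the zeroth Fitting ideal coincides with the characteristic ideal. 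Your phrasing "together with the main conjecture (which gives $\textup{Char}_\Lambda(\textup{Pic}_\infty(X_\infty))=(F)$)" slightly understates what is being used, though the full argument is clearly what you had in mind.
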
 
\begin{example} 
  Let $X_\infty/X$ be the $\Z_p^2$-cover from Example~\ref{ex:l=2}, with ${n = 3}$ and ${a = 3}$. Then $X_\infty$ contains the two $\Z_p$-voltage covers $Y_1$ and $Y_2$ which are given as follows: for $Y_1$, we have 
  \[ \alpha(e_1) = \tau, \quad \alpha(e_2) = 1 \] 
  (this corresponds to the choice ${\Gal(X_\infty/Y_1) = \langle \sigma \rangle}$), and for $Y_2$ we have 
  \[ \alpha(e_1) = \alpha(e_2) = \tau \] 
  (this corresponds to ${\Gal(X_\infty/Y_2) = \langle \sigma \tau^{-1}\rangle}$). 
  
  Then we know from Example~\ref{ex:mu} that the characteristic power series of $J(Y_1)$ is associated with $5T$, and the characteristic power series of $J(Y_2)$ is associated with $2T$. In particular, if we consider ${p = 5}$, then ${\mu(Y_1) > 0}$ and ${\mu(Y_2) = 0}$, i.e. the hypotheses from Theorem~\ref{thm:l0} are satisfied. This can be seen also by looking at the characteristic power series ${F = F(S,T)}$ of $J_\infty$ (see Example~\ref{ex:l=2}): modulo ${S = \sigma - 1}$ we obtain 
  \[ (n+2) T^2, \] 
  which is clearly divisible by $p = 5$ since ${n = 3}$ by assumption. On the other hand, if we let ${S = T}$ in $F$, then we obtain a polynomial which is associated with 
  \[ 2 T^2 + 2T^3 = T^2 (T+2) \sim T^2 \] 
  for ${p \ne 2}$; therefore $\mu(Y_2) = 0$. One can even see that ${l_0(F) > 0}$ directly here, since ${F \in (p, S)}$ for the choices ${n = 3}$ and ${p = 5}$. 
\end{example} 

The following lemma will be the crucial ingredient in the proof of Theorem~\ref{thm:l0}. 
\begin{lemma} \label{lemma:l0} 
  Let $X_\infty/X$ be a $\Z_p^2$-cover. 
  
  If ${\mu(Y) = 0}$ for any $\Z_p$-cover ${Y \in \mathcal{E}^{\subseteq X_\infty}(X)}$ of $X$, then ${\mu(\tilde{Y}) = 0}$ for all but finitely many ${\tilde{Y} \in \mathcal{E}^{\subseteq X_\infty}(X)}$. 
\end{lemma}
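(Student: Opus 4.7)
The strategy is to translate the statement into a divisibility question in the two-variable Iwasawa algebra $\bar\Lambda_2 := \Lambda_2/(p) \cong \F_p[[S,T]]$, using the main conjecture (Theorem~\ref{thm:main_conjecture}) to replace $J_\infty$ by its characteristic power series $F = \det(\Delta_\infty)$, and then using the control theorem (Lemma~\ref{lemma:control}) to relate the one-variable $\mu$-invariants of the various $\Z_p$-subcovers to specializations of $F$. Fix topological generators $\sigma, \tau$ of $\Gal(X_\infty/X) \cong \Z_p^2$ and the corresponding isomorphism $\Lambda_2 \cong \Z_p[[S,T]]$ with $\sigma \leftrightarrow S+1$, $\tau \leftrightarrow T+1$. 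Any $\tilde{Y} \in \mathcal{E}^{\subseteq X_\infty}(X)$ is determined by its Galois subgroup $\Gal(X_\infty/\tilde{Y}) \subseteq \Z_p^2$, which admits a topological generator of the form $\gamma_{\tilde{Y}} = \sigma^a \tau^b$ for some pair $(a,b) \in \Z_p^2$ with $(a,b) \not\in p\Z_p^2$.

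Given $\tilde{Y}$, Lemma~\ref{lemma:control} yields a surjection $J_\infty(X_\infty)/(\gamma_{\tilde{Y}} - 1) \twoheadrightarrow J_\infty(\tilde{Y})$ whose kernel has $p$-rank at most $l - 1 = 1$. In particular this kernel has trivial $\mu$-invariant, so $\mu(\tilde{Y}) = \mu(J_\infty/(\gamma_{\tilde{Y}} - 1))$. By Theorem~\ref{thm:main_conjecture} the module $J_\infty$ has characteristic ideal $(F)$, and therefore $\mu(\tilde{Y})$ equals the $p$-adic valuation of the image of $F$ in $\Lambda_2/(\gamma_{\tilde{Y}} - 1) \cong \Z_p[[U]]$. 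Equivalently, $\mu(\tilde{Y}) = 0$ holds precisely when $\bar{F} \notin (\overline{\gamma_{\tilde{Y}} - 1})$ in $\bar\Lambda_2$. Applying this to the given subcover $Y$ with $\mu(Y) = 0$ forces $\bar{F}$ to be a non-zero element of $\bar\Lambda_2$.

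Since $\bar\Lambda_2 = \F_p[[S,T]]$ is a two-dimensional regular local ring and hence a unique factorization domain, the non-zero element $\bar{F}$ has only finitely many pairwise non-associate prime divisors. Moreover, for distinct $\Z_p$-subcovers $\tilde{Y}_1 \neq \tilde{Y}_2$ the prime ideals $(\overline{\gamma_{\tilde{Y}_1} - 1})$ and $(\overline{\gamma_{\tilde{Y}_2} - 1})$ in $\bar\Lambda_2$ are distinct --- this is precisely the well-definedness of the generalised $l_0$-invariant discussed in Section~\ref{section:notation_iw-modules}, and can be seen directly by noting that the linear term of $\overline{(S+1)^a(T+1)^b - 1}$ is $\bar a S + \bar b T \in \F_p[[S,T]]$, which recovers the subgroup $\Z_p \cdot (a,b) \pmod p$ and hence the subcover. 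Consequently only finitely many such primes can divide $\bar{F}$, and so $\mu(\tilde{Y}) > 0$ for at most finitely many $\tilde{Y} \in \mathcal{E}^{\subseteq X_\infty}(X)$.

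The cleanest obstacle I expect is verifying the last point rigorously --- that distinct primitive directions $(a,b) \in \Z_p^2$ modulo $\Z_p^\times$-scaling yield pairwise non-associate primes $(\overline{(S+1)^a(T+1)^b-1})$ in $\F_p[[S,T]]$. The rest is a short extraction from the main conjecture and the control theorem. Note also that for the reduction from $\mu(\tilde{Y}) = \mu(J_\infty/(\gamma_{\tilde{Y}}-1))$ to a statement about $\bar{F}$, one is using the elementary fact that for a principal ideal $(F)$ in $\Lambda_2$, the image of $F$ in the quotient $\Z_p[[U]]$ has $\mu$-invariant zero if and only if its reduction modulo $p$ is non-zero, which lines up cleanly with the divisibility interpretation above.
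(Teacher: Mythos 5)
Your overall strategy matches the paper's: you reduce to the statement that the non-zero element $\bar{F}\in\F_p[[S,T]]$ lies in only finitely many of the pairwise distinct height-one primes $(\overline{\gamma_{\tilde{Y}}-1})$, invoking the control map from Lemma~\ref{lemma:control} and the characteristic power series from Theorem~\ref{thm:main_conjecture}. The concluding geometric observation --- distinct primitive directions in $\Z_p^2$ give distinct primes, and a non-zero element of a unique factorisation domain has finitely many prime divisors --- is also exactly what the paper uses (phrased there as ``$F_{J_\infty(X_\infty)}$ is contained in finitely many of the ideals $(p,\tilde{S})$'').

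The gap is in the assertion that ``by Theorem~\ref{thm:main_conjecture} \dots\ $\mu(\tilde{Y})$ equals the $p$-adic valuation of the image of $F$ in $\Lambda_2/(\gamma_{\tilde{Y}}-1)$.'' The characteristic ideal of $J_\infty$ alone does not control the $\mu$-invariant of the specialisation $J_\infty/\tilde{S}$: a pseudo-isomorphism $J_\infty\to E$ has a pseudo-null cokernel $B$, and the terms $\textup{Tor}_1^{\Lambda}(B,\Lambda/\tilde{S})$ and $B/\tilde{S}$ that enter the comparison need not be finite over $\Lambda/\tilde{S}\cong\Z_p\llbracket U\rrbracket$. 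The ``elementary fact'' you cite is about the \emph{element} $\pi(F)$, not about the \emph{module} $J_\infty/\tilde{S}$; bridging that gap is exactly the non-trivial content of the lemma. You use both directions of the putative biconditional, and each needs a separate argument which you would have to import from the paper: for ``$\bar{F}\notin(\overline{\tilde{S}})\Rightarrow\mu(\tilde{Y})=0$'', one invokes Lemma~\ref{lemma:no-pseudo-null} so that $F$ actually \emph{annihilates} $J_\infty$ (via \cite[Proposition~2.1(3)]{kleine-matar}); then $\tilde{\pi}(F)$, coprime to $p$ by hypothesis, annihilates $J_\infty/\tilde{S}$ and forces $\mu=0$, and surjectivity of $t'$ finishes. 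For ``$\mu(Y)=0\Rightarrow\bar{F}\neq0$'', one runs a Fitting-ideal argument (using $\textup{Ann}^k\subseteq\textup{Fitt}$, $\textup{Fitt}\subseteq\textup{Char}$, and compatibility of Fitting ideals with the quotient $\Lambda\twoheadrightarrow\Lambda/S$) to rule out $m_0(X_\infty)>0$. Once you supply these two steps, the rest of your argument is sound and gives the same result as the paper's proof.
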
 
\begin{proof} 
  Fix ${Y \in \mathcal{E}^{\subseteq X_\infty}(X)}$ such that ${\mu(Y) = 0}$. In what follows we will write $\Lambda_1$ for any one-dimensional Iwasawa algebra, ${\Lambda = \Z_p\llbracket S, T \rrbracket }$, and we do not abbreviate ${J_\infty(X_\infty) := J(X_\infty) \otimes \Lambda}$ to $J_\infty$ for clarity. 
  Moreover, for any ${Y \in \mathcal{E}^{\subseteq X_\infty}(X)}$, we let $J_\infty(Y)$ be the $\Lambda_1$-module ${J(Y) \otimes \Lambda_1}$, where ${\Lambda_1 = \Z_p\llbracket \Gal(Y/X)\rrbracket }$. 
  
  Suppose that the topological generators $\sigma$ and $\tau$ of ${\Gal(X_\infty/X) \cong \Z_p^2}$ have been chosen such that ${Y = X_\infty^{\langle \sigma \rangle}}$, and let ${S = \sigma + 1}$. Then Lemma~\ref{lemma:control} implies that the kernel of the canonical map 
  \[ t' \colon J_\infty(X_\infty)/S \longrightarrow J_\infty(Y) \] 
  has $p$-rank at most 1. Therefore the finitely generated ${\Lambda_1}$-module $J_\infty(X_\infty)/S$ is ${\Lambda_1}$-torsion and has $\mu$-invariant 0. This means that we can choose an annihilator ${f \in {\Lambda_1}}$ of $J_\infty(X_\infty)/S$ which is not divisible by $p$. 
  
  Now consider the $\Lambda $-module $J_\infty(X_\infty)$, and suppose that ${m_0(X_\infty) > 0}$. Since the Fitting ideal $\textup{Fitt}_{\Lambda}(J_\infty(X_\infty))$ is contained in the annihilator ideal of $J_\infty(X_\infty)$, this assumption would imply that 
  \[ \textup{Fitt}_{\Lambda }(J_\infty(X_\infty)) \subseteq (p). \] 
  But it follows from the general properties of Fitting ideals (see also \cite[Proposition~2.1]{kleine-matar}) that \begin{compactenum}[(i)] 
   \item $\pi(\textup{Fitt}_{\Lambda}(J_\infty(X_\infty)) = \textup{Fitt}_{\Lambda_1}(J_\infty(X_\infty)/S)$ and 
   \item $\textup{Ann}_{\Lambda_1}(J_\infty(X_\infty)/S)^l \subseteq \textup{Fitt}_{\Lambda_1}(J_\infty(X_\infty)/S)$ for all sufficiently large ${l \in \N}$, where 
   \[ \pi \colon \Lambda \longrightarrow {\Lambda_1} \] 
   denotes the surjection induced by mapping $S$ to 0, and where 
\[ \textup{Ann}_{\Lambda_1}(J_\infty(X_\infty/S))\] 
denotes the annihilator ideal. 
  \end{compactenum} 
  Therefore each annihilator of $J_\infty(X_\infty)/S$ would be divisible by $p$, which is a contradiction to the existence of the annihilator $f$ above. 
  
  We have shown that ${m_0(X_\infty) = 0}$. Now let ${\tilde{Y} \in \mathcal{E}^{\subseteq X_\infty}(X)}$ be arbitrary, and choose topological generators $\tilde{\sigma}$ and $\tau$ of ${\Gal(X_\infty/X)}$ such that ${\tilde{Y} = X_\infty^{\langle \tilde{\sigma} \rangle}}$. Since ${m_0(X_\infty) = 0}$, we can choose an annihilator ${F \in \Lambda}$ which is not divisible by $p$. In fact, $F$ can be chosen such that $F$ is not contained in the ideal $(p, \tilde{S})$, ${\tilde{Y} = X_\infty^{\langle \tilde{S} + 1 \rangle}}$, for all but finitely many choices of $\tilde{Y}$. Indeed, the characteristic power series $F_{J_\infty(X_\infty)}$ of $J_\infty(X_\infty)$ is contained in finitely many of the ideals $(p, \tilde{S})$ (note that each such ideal contributes to the $l_0$-invariant of $X_\infty$). Suppose now that $\tilde{Y}$ has been chosen such that $F_{J_\infty(X_\infty)}$ is not contained in the corresponding ideal $(p, \tilde{S})$. Since $J_\infty(X_\infty)$ does not contain any non-trivial pseudo-null submodules by Lemma~\ref{lemma:no-pseudo-null}, it follows from the general structure of $\Lambda$-modules that $F_{J_\infty(X_\infty)}$ annihilates $J_\infty(X_\infty)$ (see also \cite[Proposition~2.1(3)]{kleine-matar}). Therefore we can choose ${F = F_{J_\infty(X_\infty)}}$ above; this proves our claim. 
  
  Again, a suitable power of $F$ (the exponent of this power depends on the number of generators of $J_\infty(X_\infty)$ over $\Lambda$) is contained in the Fitting ideal $\textup{Fitt}_{\Lambda}(J_\infty(X_\infty))$, and 
  \[ \tilde{\pi}(F) \in \textup{Fitt}_{\Lambda_1}(J_\infty(X_\infty)/\tilde{S}) \subseteq \textup{Ann}_{\Lambda_1}(J_\infty(X_\infty)/\tilde{S}), \] 
  where now $\tilde{\pi} \colon \Lambda \longrightarrow {\Lambda_1}$ is induced by mapping $\tilde{S}$ to zero. 
  
  If $\tilde{\pi}(F)$ is not divisible by $p$ for our particular choice of $\tilde{S}$ (by the above, this can be achieved for all but finitely many ${\tilde{Y} \in \mathcal{E}^{\subseteq X_\infty}(X)}$), then it follows that ${\mu(J_\infty(X_\infty)/\tilde{S}) = 0}$, i.e.  $J_\infty(X_\infty)/\tilde{S}$ is a finitely generated $\Z_p$-module. By Lemma~\ref{lemma:control}, the canonical map 
  \[ J_\infty(X_\infty)/\tilde{S} \longrightarrow J_\infty(\tilde{Y}) \] 
  is surjective. This shows that $J_\infty(\tilde{Y})$ is also finitely generated over $\Z_p$, and therefore ${\mu(J_\infty(\tilde{Y})) = 0}$. 
\end{proof} 

\begin{proof}[Proof of Theorem~\ref{thm:l0}] 
  Since ${\mu(Y_1) = 0}$, it follows from Lemma~\ref{lemma:l0} that ${\mu(\tilde{Y}) = 0}$ for all but finitely many ${\tilde{Y} \in \mathcal{E}^{\subseteq X_\infty}(X)}$. Now fix $Y_2$ such that ${\mu(Y_2) > 0}$, and let ${U' = \mathcal{E}(Y_2, n)}$ be a neighbourhood as in Corollary~\ref{cor:unbounded}. Then ${\mu(\tilde{Y}) = 0}$ for all but finitely many ${\tilde{Y} \in U'}$. The first assertion of Theorem~\ref{thm:l0} now follows from Corollary~\ref{cor:unbounded}. 
  
  Moreover, it follows from the proof of Lemma~\ref{lemma:l0} that ${m_0(X_\infty) = 0}$, and that ${\mu(Y_2) > 0}$ is possible only if the characteristic power series ${F_{J_\infty(X_\infty)} \in \Lambda}$ is contained in the ideal $(p, \tilde{S})$, where ${\tilde{S} = \tilde{\sigma} - 1}$ for some topological generator $\tilde{\sigma}$ of $\Gal(X_\infty/Y_2)$. This implies that ${l_0(J_\infty(X_\infty)) > 0}$, by the definitions. 
\end{proof} 
\bibliography{references} 
\bibliographystyle{plain} 

\end{document}